\newtheorem{lemma}{Lemma}
\newtheorem{prop}{Proposition}
\newtheorem{corollary}{Corollary}
\newtheorem{theo}{Theorem}
\newtheorem{rem}{Remark}
\theoremstyle{definition}
\newtheorem{definition}{Definition}
\newtheorem{assumption}{Assumption}
\newcommand{\dist}{\mathrm{dist}}
\newcommand{\R}{\mathbb{R}}
\newcommand{\Z}{\mathbb{Z}}
\newcommand{\N}{\mathbb{N}}
\newcommand{\B}{\mathbb{B}}
\newcommand{\e}{\varepsilon}
\renewcommand{\aa}{\boldsymbol a}
\newcommand{\ee}{e}
\newcommand{\bb}{\mathrm b}
\mathchardef\emptyset="001F
\newcommand{\expec}[1]{\left\langle #1 \right\rangle}
\newcommand{\step}[1]{\noindent \textit{Step} #1.}
\begin{document} 
\title{Moment bounds for the corrector in stochastic homogenization of
  a percolation model}
\author{Agnes Lamacz%
  \thanks{agnes.lamacz@tu-dortmund.de}}
 \affil{Technische Universit\"at Dortmund, Fakult\"at f\"ur
   Mathematik, Germany}
\author{Stefan Neukamm\thanks{stefan.neukamm@wias-berlin.de}}
\affil{Weierstrass Institute for Applied Analysis and Stochastics, Berlin, Germany}
\author{Felix Otto\thanks{felix.otto@mis.mpg.de}}  
\affil{Max Planck Institute for Mathematics in the Sciences, Leipzig, Germany}
\date{June 22, 2014\footnote{This is a revised version of WIAS Preprint No. 1836 (2013)}
}
\maketitle

\begin{abstract}
  We study the corrector equation in stochastic homogenization
  for a simplified Bernoulli percolation model on $\Z^d$, $d>2$. The model is obtained from the
  classical $\{0,1\}$-Bernoulli bond percolation by
  conditioning all bonds parallel to the first coordinate
  direction to be open. As a  main result we prove (in fact for a slightly more
  general model) that stationary correctors exist and that all
  finite moments of the corrector are bounded. This extends a previous
  result in \cite{GO1}, where uniformly elliptic conductances are
  treated, to the degenerate case. With regard to the associated random conductance
  model, we obtain as a side result  that the corrector not only grows
  sublinearly, but slower than any polynomial rate. Our argument
  combines a quantification of ergodicity by means of a Spectral Gap
  on Glauber dynamics with regularity estimates on the gradient of the
  elliptic Green's function.
  \medskip

  \noindent {\bf Keywords:} 
  quantitative stochastic homogenization, percolation, corrector.
\end{abstract}

\section{Introduction}
We consider the lattice graph $(\Z^d,\B^d)$, $d>2$, where $\B^d$ denotes the set of nearest-neighbor
edges. Given a stationary and ergodic probability measure
$\expec{\cdot}$ on $\Omega$ -- the space of conductance fields
$\aa:\B^d\to[0,1]$ -- we study the \textit{corrector equation} from stochastic
homogenization, i.e.~the elliptic difference equation
\begin{equation}\label{eq:cor-equation-phys}
  \nabla^*(\aa(\nabla\phi+e))=0,\qquad x\in\Z^d.
\end{equation}
Here,  $\nabla$ and $\nabla^*$ denote discrete versions of the continuum gradient
and (negative) divergence, cf. Section~\ref{S2}, and $e\in\R^d$ denotes
a vector of unit length, which is fixed throughout the paper. The corrector equation \eqref{eq:cor-equation-phys} emerges in the homogenization of discrete elliptic equations with random
coefficients:  For random conductances that are stationary and
ergodic (with respect to the shifts $\aa(\cdot)\mapsto\aa(\cdot+z)$,
$z\in\Z^d$, cf. Section~\ref{S2}), and under the assumption of uniform
ellipticity (i.e.~there exists $\lambda_0>0$ such
that $\aa\geq \lambda_0$ on $\B^d$ almost surely), a classical result from stochastic homogenization (e.~g. see \cite{Kozlov-87, Kunnemann-83}) shows that the effective behavior of $\nabla^*\aa\nabla$ on large length scales is captured by
the homogenized elliptic operator $\nabla^*\aa_{\hom}\nabla$ where
$\aa_{\hom}$  is  a
deterministic, symmetric and positive definite $d\times d$ matrix. It is characterized by
the minimization problem
\begin{equation}\label{eq:min}
  e \cdot\aa_{\hom}e =\inf\limits_{\varphi}\expec{(e +\nabla\varphi)\cdot\aa(e +\nabla\varphi)},
\end{equation}
where the infimum is taken over random fields
$\varphi$ that are $\expec{\cdot}$-stationary in the sense of $\varphi(\aa,x+z)=\varphi(\aa(\cdot+z),x)$ for all
$x,z\in\Z^d$ and $\expec{\cdot}$-almost every $\aa\in\Omega$.
Minimizers to  \eqref{eq:min} are called \textit{stationary
  correctors} and are characterized as the stationary solutions to the corrector
equation \eqref{eq:cor-equation-phys}. Due to the lack of a Poincar\'e inequality for $\nabla$ on the infinite dimensional space of
stationary random fields, the elliptic operator $\nabla^*\aa\nabla$ is highly degenerate
and the minimum in \eqref{eq:min} may not be obtained in general. In
fact, it is known to fail generally for $d=2$. The only existence result of a stationary corrector (in dimensions
$d>2$) has been obtained
recently in \cite{GO1} by Gloria and the third author under the
assumption that the $\aa$'s are uniformly elliptic, and that
$\expec{\cdot}$ satisfies a Spectral Gap Estimate, which is in
particular the case for independent and identically distributed coefficients. They also show that $\expec{|\phi|^p}\lesssim 1$ for all $p<\infty$.

\medskip 
The goal of the present paper is to extend this result to the case of
conductances with degenerate ellipticity.  To be definite, consider the probability
measure $\expec{\cdot}_{\lambda}$ constructed by the following procedure:
\begin{equation}\label{eq:modbernoulli}
  \begin{aligned}
    &\text{Take the classical $\{0,1\}$-Bernoulli-bond percolation on
      $\B^d$
      with parameter $\lambda\in(0,1]$}\\
    &\text{and declare all bonds parallel to the coordinate direction
      $e_1$ to be open.}
  \end{aligned}
\end{equation}
(We adapt the convention
to call a bond ``open'' if the associated coefficient is ``$1$'',
while a bond is ``closed'' if the associated coefficient is
``$0$''. The parameter $\lambda$ denotes the probability that a bond
is ``open''). As for $d$-dimensional
Bernoulli percolation, $\expec{\cdot}_\lambda$ describes a random
graph of open bonds, which is locally
disconnected with positive probability.  However, as a
merit of the modification, any two vertices in the random graph are
almost surely connected by some open path. As a main result we show that
\eqref{eq:cor-equation-phys} admits a stationary solution, all finite
moments of which are bounded:

\paragraph{Theorem (main result).} Let $d>2$ and $\lambda\in(0,1]$. There exits $\phi:\Omega\times\Z^d\to\R$ such
that for $\expec{\cdot}_\lambda$ almost every $\aa\in\Omega$ we have 
\begin{itemize}
\item  $\phi(\aa,\cdot)$ solves \eqref{eq:cor-equation-phys},
\item  $\phi(\aa,\cdot+z)=\phi(\aa(\cdot+z),\cdot)$ for all $z\in\Z^d$,
\end{itemize}
and
  \begin{equation*}
    \forall p<\infty\,:\qquad
    \expec{|\phi|^p}_\lambda^{\frac{1}{p}}\leq C.
  \end{equation*}
  Here $C$ denotes a constant that only depends on $p$, $\lambda$ and $d$.
\medskip

The modified Bernoulli percolation model $\expec{\cdot}_\lambda$ fits into a
slightly more general framework that we introduce in Section~\ref{S2}
below, cf. Lemma~\ref{L:regul-perc-model}. The result above will then
follow as a special case of Theorem~\ref{T1} stated below. 
\medskip


\medskip

{\bf Relation to stochastic
  homogenization.} Consider the decaying solution $u_\e:\Z^d\to\R$ to the
equation
\begin{equation*}
  \nabla^*(\aa\nabla u_\e)(\cdot)=\e^2f(\e \cdot)\qquad\text{in }\Z^d,
\end{equation*}
where $f:\R^d\to\R$ is smooth, compactly supported, and $\aa$ is
distributed according to $\expec{\cdot}_\lambda$. Classical results of
stochastic
homogenization (see \cite{Papanicolaou-Varadhan-79}, \cite{Kozlov-79},
\cite{Kunnemann-83}) show that for almost every $\aa\in\Omega$ the
(piecewise constant interpolation of the) rescaled 
function $u_\e(\tfrac{\cdot}{\e})$ converges as
$\e\downarrow 0$ to the
unique decaying solution $u_{\hom}:\R^d\to\R$ of the deterministic elliptic equation
\begin{equation*}
  -\nabla\cdot(\aa_{\hom}\nabla u_{\hom})=f\qquad\text{in }\R^d.
\end{equation*}
Moreover, a formal two-scale expansion suggests that
\begin{equation}\label{eq:22}
  u_\e(x)\approx u_{\hom}(\e x)+\e\sum_{j=1}^d\phi_{j}(x)\partial_ju_{\hom}(\e x),
\end{equation}
where $\phi_j$ denotes the (stationary) solution of
\eqref{eq:cor-equation-phys} for $e=e_j$ -- the $j$th coordinate
direction. The question how to \textit{quantify} the errors emerging
in this limiting process  is rather subtle. Note that in the case of deterministic periodic homogenization, the
good compactness properties of the $d$-dimensional ``reference cell of periodicity''
yield a natural starting point for estimates. In contrast, in the
stochastic case the reference cell has to be replaced by the probability space
$(\Omega,\expec{\cdot})$, which has infinite dimensions and thus most
``periodic technologies'' break down. Nevertheless,
estimates for the homogenization error
$\|u_\e-u_{\hom}\|$ and related quantities have been obtained by
\cite{Yurinskii-76,  Caputo-Ioffe-03, Bourgeat-04, Conlon-Spencer-13},
see also \cite{Cafarelli-Souganidis-10,Armstrong-Smart-13} for recent
results on fully nonlinear elliptic equations or equations in
non-divergence form.

While the asymptotic result of stochastic homogenization holds for general stationary and ergodic coefficients (at least in the uniformly elliptic
case), the derivation of error estimates requires a quantification of
ergodicity. In a series of papers (see \cite{GNO1,GNO3, GO1,GO2}) two of the authors and Gloria developed a
quantitative theory for the corrector equation
\eqref{eq:cor-equation-phys} (and regularized versions) based on the
assumption that the underlying statistics satisfies a Spectral Gap
Estimate (SG) for a Glauber dynamics on the coefficient fields. This
assumption is satisfied e.g.  in the case of independent and identically distributed
(i.~i.~d.~coefficients). In \cite{GO1, GNO1}  moment bounds for the corrector, similar to the
one in the present paper, have been obtained. These  bounds are
at the basis of various optimal estimates; e.g. \cite{GNO1} contains a complete and optimal
analysis of the approximation of $\aa_{\hom}$ via periodic
representative volume elements, and \cite{GNO3} establishes optimal
estimates for the homogenization error and the expansion in \eqref{eq:22}.

While in the works mentioned above it is always assumed that the coefficients are
\textit{uniformly elliptic}, i.e. $\aa\in[\lambda_0,1]^{\B^d}$ for
some fixed $\lambda_0>0$, in the present paper we derive moment bounds for a
model with \textit{degenerate} elliptic coefficients. As in \cite{GO1,GNO1}, a
crucial element of our approach is an estimate on the g\textit{radient} of the \textit{elliptic Green's function} associated with
$\nabla^*\aa\nabla$. The required estimate is pointwise in $\aa$, but (dyadically) averaged
in space, and obtained by a self-contained and short argument, see Proposition~\ref{P1} below. It
extends the argument in \cite{GO1} to the degenerate elliptic case. Since in the degenerate case the elementary inequality $\lambda_0|\nabla
u|^2\leq \nabla u\cdot\aa\nabla u$ breaks down, we replace it  by a weighted, integrated
version (see Lemma~\ref{lem:coercivity-phys} below). Compared to
more sophisticated methods that e.g. rely on isoperimetric properties
of the graph, an advantage of our approach is that it only invokes
simple geometrical properties, namely spatial averages (on balls) of the inverse of the chemical distance between nearest
neighbor vertices. We believe that our
approach extends (although not in a straight-forward manner) to the case of standard supercritical Bernoulli
percolation. This is a  question that we study in a work in progress.
\medskip

{\bf Connection to random walks in random environments (RWRE).}
Although, the main motivation of our work is quantitative
homogenization, we would like to comment on the connection to invariance principles for (RWRE). In fact, there
is a strong link between stochastic homogenization and (RWRE): The operator $\nabla^*(\aa\nabla)$ generates
a stochastic process, namely the variable-speed random walk
$X=(X_{\aa}(t))_{t\geq 0}$, which is a continuous-time random walk in the random
environment $\aa$. In the early work \cite{Kipnis-Varadhan-86} (see
also \cite{Kunnemann-83}) the authors considered general stationary
and ergodic environments. For uniformly elliptic coefficients they prove an \textit{annealed
invariance principal} for $X$, saying that the law of the rescaled process $\sqrt\e
X_{\aa}(\e^{-1}t)$ weakly converges to that of a Brownian motion with
covariance matrix $2\aa_{\hom}$. In \cite{Sidoravicius-Sznitman-04}
Sidoravicius and Sznitman prove a stronger \textit{quenched invariance
principle} for $X$, which says that the convergence even holds for
almost every environment $\aa$. 

More recently, invariance principles have been obtained for
more general environments, see \cite{Biskup-11} and \cite{Kumagai-14} for recent surveys in
this direction. Most prominently, supercritical bond percolation on
$\Z^d$ has been considered: Here, the annealed result is due to
\cite{DeMasi-etal-1989}, while quenched results have been obtained in
\cite{Sidoravicius-Sznitman-04} for $d\geq 4$ and in
\cite{Berger-Biskup-07, Mathieu-Piatnitski-07}  for $d\geq 2$. See
also \cite{Andres-etal-ta, Andres-Deuschel-Slowik-ta} for recent
related results on degenerate elliptic, possibly unbounded
conductances. 

The main difficulty in proving a quenched invariance
principle compared to the annealed version is to establish a
\textit{quenched sublinear growth} property (see \eqref{eq:18} below)
for a corrector field $\chi$. The latter is closely related to the function
$\phi$ considered in Theorem~\ref{T1}, see the discussion below
Corollary~\ref{C1} for more details. In the uniformly elliptic case,
sublinearity of $\chi$ is obtained by soft arguments from
ergodic theory combined with a Sobolev embedding, see
\cite{Sidoravicius-Sznitman-04}. For supercritical Bernoulli
percolation the argument is more subtle: For $d\geq 3$ the proofs in
\cite{Sidoravicius-Sznitman-04,Berger-Biskup-07,
  Mathieu-Piatnitski-07}  use heat-kernel upper bounds (as deduced by
Barlow \cite{Barlow-04}) or other ``heat-kernel technologies'' (e.g. see
\cite{Biskup-Prescott-07, Andres-etal-ta, Andres-Deuschel-Slowik-ta}) that require a detailed
understanding of the geometry of the percolation cluster, and thus
require the use of sophisticated arguments from percolation theory
(e.g. isoperimetry, regular volume growth and comparison of chemical
and Euclidean distances). Conceptually, the use of
such fine arguments seems not to be necessary in the derivation of
quenched invariance principles. Motivated by this in
\cite{Biskup-Prescott-07} and \cite{Andres-Deuschel-Slowik-ta}
different methods are employed with a reduced  usage of heat-kernel technology. 

Our approach yields, as  a side-result,  an alternative way to achieve
this goal: The quenched sublinear growth property can easily be
obtained from the moment bound derived in Theorem~\ref{T1}. In fact,
the estimate of Theorem~\ref{T1} is stronger: As we explain in the
discussion following Corollary~\ref{C1}, our moment bounds imply that the
growth of $\chi$ is not only sublinear, but \textit{slower than any rate}, see
\eqref{eq:21}. Of course, the environment considered in the
present paper, namely the modified percolation model
$\expec{\cdot}_\lambda$, is much simpler than
supercritical Bernoulli percolation. Nevertheless, it shares some of
the ``degeneracies'' featured by percolation; e.g. for every ball
$B\subset\Z^d$ with finite radius Poincar\'e's inequality $\sum_{x\in
  B}u^2(x)\leq C(\aa,B)\sum_{\bb\in B}\aa(\bb)|\nabla u(\bb)|^2$ fails
with positive probability. Furthermore, in contrast to the above mentioned results, our 
argument requires  only mild estimates on the Green's function. More
precisely, as already mentioned, we require an estimate on the \textit{gradient} of the
\textit{elliptic} Green's function, which -- in contrast to quenched heat kernel
estimates -- can be obtained by fairly simple arguments, see
Proposition~\ref{P1}. Of course, as it is well-known, estimates on the gradient of
the elliptic Green's function can also be obtained from estimates on the associated heat kernel by an
integration in time, and a subsequent application of Caccioppoli's
inequality. In particular, heat-kernel estimates in the spirit of the one obtained by Barlow in the case of
supercritical Bernoulli percolation, see \cite[Theorem~1]{Barlow-04},
would be sufficient to make this program work. Yet, since the elliptic
estimates that we require are less sensitive to the geometry of the
graph, and thus can be obtained by simpler arguments, we opt for a self-contained proof that only relies on elliptic
regularity theory. Another interesting, and -- as we believe -- advantageous property of our approach is that (thanks to the Spectral Gap
Estimate) probabilistic and deterministic considerations are well
separated, e.g. Proposition~\ref{P1} is pointwise in $\aa$ and does not
involve the ensemble.

\medskip

{\bf Structure of the paper.} In Section~\ref{S2} we gather basic
definitions and introduce the slightly more general framework studied
in this paper. We then present the main result in the general  framework. Section~\ref{S3} is devoted to the proof of the main result: we first
discuss the general strategy of the proof and present several
auxiliary lemmas needed for the proof of the main theorem -- in
particular, the coercivity estimates, see Lemmas~\ref{lem:coercivity-phys}
and \ref{lem:coercivity-prob}, and an estimate
for the gradient of the elliptic Green's function, see
Proposition~\ref{P1}, which  play a key
role in our argument. The proof of the main result is given at the end
of Section~\ref{S3}, while the auxiliary results are proven in
Section~\ref{S:proofs}.

\medskip
Throughout this article, we use the following notation, see
Section~\ref{S2} for more details:
\begin{itemize}
\item $d$ is the dimension;
\item $\Z^d$ is the integer lattice;
\item $(\ee_1,\dots,\ee_d)$ is the canonical basis of $\Z^d$;
\item $e\in\R^d$, which appears in \eqref{eq:cor-equation-phys},
  denotes a vector of unit length and is fixed throughout the paper;
\item $\B^d:=\{\,\bb=\{x,x+e_i\}\,:\,x\in\Z^d,\,i=1,\ldots,d\,\}$ is the set of nearest neighbor bonds of $\Z^d$;
\item $B_R(x_0)$ is the cube of vertices $x\in x_0+([-R,R]\cap\Z)^d$;
\item $Q_R(x_0)$ is the cube of bonds $\bb=\{x, x+e_i\}\in\B^d$ with
  $x\in B_{R}(x_0)$ and $i\in\{1,\ldots,d\}$;
\item $|A|$ denotes the number of elements in $A\subset\Z^d$ (resp. $A\subset\B^d$).
\end{itemize}

\section{General framework}\label{S2}
In the first part of this section, we introduce the general framework
following the presentation of \cite{GNO1}: We introduce a discrete differential
calculus, the random conductance model, and finally recall the standard definitions of the
corrector and the modified corrector.

\subsection{Lattice and discrete differential calculus}
We consider the lattice graph $(\Z^d,\B^d)$, where $\B^d:=\{\,\bb=\{x,x+e_i\}\,:\,x\in\Z^d,\,i=1,\ldots,d\,\}$ denotes the
set of nearest-neighbor bonds. We write $\ell^p(\Z^d)$ and $\ell^p(\B^d)$, $1\leq p\leq\infty$, for the
usual spaces of $p$-summable (resp. bounded for $p=\infty$) functions on $\Z^d$ and $\B^d$. For $u:\Z^d\to\R$ the
\textit{discrete derivative} $\nabla u(\bb)$, $\bb\in\B^d$, is
defined by the expression
\begin{gather*}
  \nabla u(\bb):= u(y_{\bb})-u(x_{\bb}).
\end{gather*}
Here $x_{\bb}$ and $y_{\bb}$ denote the unique vertices with
$\bb=\{x_{\bb},y_{\bb}\}\in\B^d$ satisfying
$y_{\bb}-x_{\bb}\in\{e_1,\ldots,\ee_d\}$. 
We denote by $\nabla^*$ the adjoined of $\nabla$, so that we have for $F:\B^d\to\R$ 
\begin{equation*}
  \nabla^*F(x)=\sum_{i=1}^dF(\{x-e_i,x\})-F(\{x,x+e_i\}).
\end{equation*}
Furthermore, the discrete integration by parts formula reads 
\begin{equation}\label{int-by-parts}
  \sum_{\bb\in\B^d}\nabla u(\bb) F(\bb)=  \sum_{x\in\Z^d}u(x)\nabla^*F(x),
\end{equation}
and holds whenever the sums converge.
\medskip

\subsection{Random conductance field}  To each bond $\bb\in\B^d$ a \textit{conductance} $\aa(\bb)\in[0,1]$ is
attached. Hence, a \textit{configuration} of the lattice is described by a \textit{conductance field}
$\aa\in\Omega$, where $\Omega:=[0,1]^{\B^d}$ denotes the
\textit{configuration space}. Given $\aa\in\Omega$ we define the
chemical distance between vertices
$x,y\in\Z^d$ by
\begin{equation*}
  \dist_{\aa}(x,y):=\inf\left\{\,\sum_{\bb\in\pi}\aa(\bb)^{-1}\ :\
      \pi\text{ is a path from $x$ to $y$}\ \right\}\qquad(\text{where }\tfrac{1}{0}:=+\infty).
\end{equation*}
We equip $\Omega$  with the product topology (induced
by $[0,1]\subset\R$) and the usual product $\sigma$-algebra, and
describe \textit{random configurations} by means of a probability
measure on $\Omega$, called the \textit{ensemble}. The associated expectation is denoted by
$\expec{\cdot}$.
\medskip

Our assumptions on $\expec{\cdot}$ are the following:
\begin{assumption}\label{A}
  \begin{itemize}
    \item[]
  \item[(A1)] (\textit{Stationarity}). The \textit{shift operators} $\Omega\ni\aa\mapsto\aa(\cdot+z)\in\Omega$, $z\in\Z^d$ preserve the measure $\expec{\cdot}$. (For a bond $\bb=\{x,y\}\in\B^d$ and $z\in\Z^d$ we write
    $\bb+z:=\{x+z,y+z\}$ for the \textit{shift} of $\bb$ by $z$.)
  \item[(A2)] (\textit{Moment condition}). There exists a modulus of
    integrability $\Lambda:[1,\infty)\to[0,\infty)$ such that the distance of neighbors is finite on average in the
    sense that
    \begin{equation*}
      \forall p<\infty\ :\
      \max_{i=1,\ldots,d}\expec{(\dist_{\aa}(0,e_i))^p}^{\frac{1}{p}}\leq\Lambda(p).
    \end{equation*}
  \item[(A3)]  (\textit{Spectral Gap Estimate}). There exists a
    constant $\rho>0$ such that for all $\zeta\in
    L^2(\Omega)$ we have
    \begin{equation*}
      \expec{(\zeta-\expec{\zeta})^2}\leq \frac{1}{\rho}\sum_{\bb\in\B^d}\expec{\left(\frac{\partial\zeta}{\partial\bb}\right)^2},
    \end{equation*}
    where $\frac{\partial\zeta}{\partial\bb}$ denotes the
    \textit{vertical derivative} as defined in
    Definition~\ref{D:vertical} below.
  \end{itemize}
  For technical reasons we need to strengthen (A2):
  \begin{itemize}
  \item[(A2+)] We assume that
    \begin{equation*}
      \forall p<\infty\ :\
      \max_{i=1,\ldots,d}\expec{(\dist_{\aa^{e_i,0}}(0,e_i))^p}^{\frac{1}{p}}\leq\Lambda(p),
    \end{equation*}
    where $\aa^{e_i,0}$ denotes the conductance field obtained by
    ``deleting'' the bond $\{0,e_i\}$ (i.~e. $\aa^{e_i,0}(\bb)=\aa(\bb)$ for all $\bb\neq\{0,e_i\}$ and
$\aa^{e_i,0}(\{0,e_i\})=0$).
  \end{itemize}

\end{assumption}
Let us comment on these properties. A minimal requirement needed for
qualitative stochastic homogenization in the uniformly elliptic case is stationarity and
ergodicity of the ensemble. The basic example for such an ensemble are i.~i.~d.~coefficients which means that $\expec{\cdot}$ is a $\B^d$-fold product of a ``single edge'' probability measure on $[0,1]$. The assumption (A3) is weaker than assuming i.~i.~d., but stronger
than ergodicity. Indeed, in \cite{GNO1} it is shown that any i.~i.~d.
ensemble satisfies (A3) with constant $\rho=1$. Moreover, it is shown
that (A3) can be
seen as a quantification of ergodicity. From the functional analytic point of
view the spectral gap estimate is a Poincar\'e inequality where the
derivative is taken in vertical direction, see below. (The terminology ``vertical'' versus ``horizontal'' is motivated from viewing $\aa\in\Omega$ as a
``height''-function defined on the ``horizontal'' plane $\B^d$). We
recall from \cite{GNO1} the definition of the vertical derivative:
\begin{definition}
  \label{D:vertical}
  For $\zeta\in L^1(\Omega)$ the \textit{vertical derivative} w.~r.~t.
  $\bb\in\B^d$ is given by
  \begin{equation*}
    \frac{\partial \zeta}{\partial \bb}:=\zeta-\expec{\zeta}_\bb,
  \end{equation*}
  where $\expec{\zeta}_{\bb}$ denotes the conditional expectation
  where we condition on $\{\aa(\bb')\}_{\bb'\neq\bb}$. For $\zeta:\Omega\to\R$ sufficiently smooth we denote by
  $\frac{\partial \zeta}{\partial\aa(\bb)}$ the classical partial
  derivative of $\zeta$ w.~r.~t. the coordinate $\aa(\bb)$.
\end{definition}
Property (A2) is a crucial assumption on the  connectedness of the
graph. In particular it implies that almost surely every pair of vertices can be connected
by a path with finite intrinsic length. However, (A2) and (A2+) do not exclude
configurations with coefficients that vanish with non-zero
probability, as it is the case for $\expec{\cdot}_\lambda$ -- the
model considered in the introduction:

\begin{lemma}
  \label{L:regul-perc-model}
  The modified Bernoulli percolation model $\expec{\cdot}_\lambda$
  defined via \eqref{eq:modbernoulli} satisfies Assumption~1 with $\rho=1$.
\end{lemma}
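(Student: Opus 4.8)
The plan is to verify the four items (A1), (A2), (A2+), (A3) of Assumption~\ref{A} separately. Stationarity (A1) is immediate: classical i.i.d.\ Bernoulli bond percolation is stationary, and the prescription ``declare all bonds parallel to $e_1$ open'' in \eqref{eq:modbernoulli} is invariant under the shifts $\bb\mapsto\bb+z$; hence $\expec{\cdot}_\lambda$ is shift invariant. For the Spectral Gap (A3) the key observation is that $\expec{\cdot}_\lambda$ is a \emph{product measure} on $\Omega$: the conductances $\aa(\bb)$ are independent, deterministically equal to $1$ on bonds parallel to $e_1$ and Bernoulli$(\lambda)$ otherwise. The estimate (A3) with $\rho=1$ is then exactly the Efron--Stein inequality; its proof by tensorization of the variance, as recalled in \cite{GNO1} for i.i.d.\ fields, uses only independence of the coordinates and not that their laws coincide. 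On an $e_1$-bond $\bb$ the conductance is deterministic, so $\frac{\partial\zeta}{\partial\bb}=\zeta-\expec{\zeta}_\bb=0$ and such bonds simply drop out of the right-hand side, consistently with the fact that they carry no randomness.

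It remains to treat the moment conditions. First, (A2+) implies (A2): since $\aa^{e_i,0}$ is obtained from $\aa$ by lowering one conductance to $0$, every path costs at least as much under $\aa^{e_i,0}$ as under $\aa$, so $\dist_\aa(0,e_i)\le\dist_{\aa^{e_i,0}}(0,e_i)$ pointwise, and the moment bound for the latter transfers to the former. Hence it suffices to exhibit, for each $i\in\{1,\dots,d\}$ and $\expec{\cdot}_\lambda$-a.e.\ $\aa$, an explicit path from $0$ to $e_i$ made of bonds that are open in $\aa^{e_i,0}$, whose length $L_i=L_i(\aa)$ has geometric tails with rate depending only on $\lambda$. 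Indeed, along such a path every bond contributes exactly $\aa^{e_i,0}(\bb)^{-1}=1$, so $\dist_{\aa^{e_i,0}}(0,e_i)\le L_i$, and the tail bound yields $\max_i\expec{(\dist_{\aa^{e_i,0}}(0,e_i))^p}^{1/p}\le\Lambda(p)$ with $\Lambda(p)$ depending only on $p$, $\lambda$, $d$.

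The path is built from the deterministic ``$e_1$-highways'': in $\aa^{e_i,0}$ every bond parallel to $e_1$ is still open (for $i\ge2$ only an $e_i$-bond was deleted; for $i=1$ only the single bond $\{0,e_1\}$), so each line $\{te_1:t\in\Z\}+v$ is traversed at unit cost per step. For $i\ge2$ one shuttles through one open ``rung'': the bonds $\{te_1,te_1+e_i\}$ with $t\neq0$ remain i.i.d.\ Bernoulli$(\lambda)$ in $\aa^{e_i,0}$, so a position $T\neq0$ of minimal modulus with this rung open exists a.s.\ with $\mathbb P(|T|\ge k)=(1-\lambda)^{2(k-1)}$, and the path $0\leadsto Te_1\leadsto Te_1+e_i\leadsto e_i$ (first and last legs along $e_1$-lines) has length $2|T|+1$. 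For $i=1$ the deleted bond lies on the highway through $0$, so a single rung does not suffice and one makes a two-rung detour through the parallel line $\{te_1+e_2:t\in\Z\}$: let $t^-\le0$ be maximal and $t^+\ge1$ minimal with the rungs $\{t^{\pm}e_1,t^{\pm}e_1+e_2\}$ open (both exist a.s., with independent geometric $-t^-$ and $t^+-1$); then $0\leadsto t^-e_1\leadsto t^-e_1+e_2\leadsto t^+e_1+e_2\leadsto t^+e_1\leadsto e_1$ uses no deleted bond and has length $2(t^+-t^-)+1$. In either case $L_i$ has the required geometric tail, and a geometric-series estimate gives all moments.

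The only genuine subtlety is this $i=1$ case of (A2+): the removed bond obstructs the obvious ``out-and-back'' path and forces the detour through a neighbouring highway. Everything else is either an elementary structural observation (A1, A3, and the monotonicity (A2+)$\Rightarrow$(A2)) or bookkeeping of geometric tail estimates.
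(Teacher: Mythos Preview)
Your proof is correct and, for (A1), (A3), and the case $i\ge2$ of (A2+), follows the same strategy as the paper: a U-shaped detour along the deterministic $e_1$-highways through the nearest open rung, whose position has geometric tails. You are in fact more careful than the paper on one point: the paper writes ``by stationarity and symmetry we may assume $e_i=e_d$'', which, since $e_1$ is the distinguished direction in the model, strictly speaking only reduces among $i\in\{2,\dots,d\}$; your separate two-rung detour for $i=1$ fills this small gap in the paper's write-up.
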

\begin{proof}
  
  Evidently, $\expec{\cdot}_{\lambda}$ can be written as the  (infinite) product of probability measures attached to
  the bonds in $\B^d$. These ``single-bond'' probability measures only
  depend on the direction of the bond. Hence, $\expec{\cdot}_{\lambda}$ is stationary.
  Another consequence of the product structure is that $\expec{\cdot}_{\lambda}$ satisfies
  (A3) with constant $\rho=1$ (see \cite[Lemma~7]{GNO1} for the argument). It
  remains to check (A2+). 
	By stationarity and symmetry we may assume that $e_i=e_d$. Consider the (random) set
  \begin{equation*}
    {\mathcal L}(\aa):=\{\,j\in\Z\,:\,\aa^{e_d,0}(\{je_1,je_1+e_d\})=1\,\}.
  \end{equation*}
  Clearly, each $j\in{\mathcal L(\aa)}$ yields an open path connecting
  $0$ and $e_d$, for instance the ``U-shaped'' path through the sites $0$,
  $je_1$, $je_1+e_d$ and $e_d$. Hence, $\dist_{\aa^{e_d,0}}(0,e_d)\leq
2\dist(0,{\mathcal L(\aa)})+1$ almost surely, where $\dist(0,\mathcal
L(\aa)):=\min_{j\in\mathcal L(\aa)}|j|$. Consequently, it suffices to prove that
  \begin{equation*}
    \expec{(2\dist(0,{\mathcal L(\aa)})+1)^{p}}^{\frac{1}{p}}_\lambda<\infty
  \end{equation*}
  for any $p\geq 1$. Note that due to the definition $\aa^{e_d,0}(\{0,e_d\})=0$ 
	and thus $\dist(0,{\mathcal L(\aa)})\in\N$. Hence,  
  \begin{equation*}
    \expec{(2\dist(0,{\mathcal L(\aa)})+1)^{p}}_\lambda=\sum_{k=1}^\infty
    (2k+1)^{p}\expec{{\boldsymbol 1}(A_k)}_{\lambda},
  \end{equation*}
  where ${\boldsymbol 1}(A_k)$ denotes the set indicator function of $A_k:=\{\,\aa\,:\,\dist(0,{\mathcal L}(\aa))= k\,\}$. Evidently, we have
  \begin{equation*}
    A_k\subset A_k':=\Big\{\,\aa\,:\,\aa(\{je_1,je_1+e_d\})=0\text{ for
      all }|j|=1,\ldots,k-1\,\Big\}.
  \end{equation*}
  From $\expec{{\boldsymbol 1}(A_k')}_{\lambda}=(1-\lambda)^{k-1}$, we
  deduce that
  \begin{equation*}
    \expec{(2\dist(0,{\mathcal L(\aa)})+1)^{p}}_\lambda\leq \sum_{k=1}^\infty
    (2k+1)^{p}(1-\lambda)^{2(k-1)}.
  \end{equation*}
  The sum on the right-hand side converges, since $0<\lambda\leq 1$ by
  assumption.
  This completes the proof.
\end{proof}

\section{Main result}

We are interested in stationary solutions to the corrector equation \eqref{eq:cor-equation-phys}. Note that we tacitly identify the vector $e\in\R^d$ with the translation invariant vector field $e(\bb):=e \cdot(y_{\bb}-x_{\bb})$. For conciseness we write
\begin{align*}
  \mathcal S:=\,\Big\{\,\varphi\,:\,\Omega\times\Z^d\to\R\,\big|\,&\varphi\text{ is measurable and stationary, i.~e. }\varphi(\aa(\cdot+z),x)=\varphi(\aa,x+z)\\
  &\text{for all $x,z\in\Z^d$ and $\expec{\cdot}$-almost every $\aa\in\Omega$}\,\Big\}
\end{align*}
for the space of \textit{stationary random fields}. Thanks to (A1) the
expectation $\expec{\varphi}=\expec{\varphi(\cdot,x)}$ of a stationary
random variable does not depend on $x$. Therefore,
$\|\varphi\|_{L^2(\Omega)}:=\expec{|\varphi|^2}^{\frac{1}{2}}$ defines
a norm on $(\mathcal S,\|\cdot\|_{L^2(\Omega)})$.
\medskip

We are interested in solutions to \eqref{eq:cor-equation-phys} in
$(\mathcal S,\|\cdot\|_{L^2(\Omega})$. Thanks to discreteness, the
operator $\nabla^*(\aa\nabla)$ is bounded and linear on $(\mathcal
S,\|\cdot\|_{L^2(\Omega)})$. However, it is degenerate-elliptic for two-reasons:
\begin{itemize}
\item In general the Poincar\'e inequality  does not hold in $(\mathcal S,\|\cdot\|_{L^2(\Omega)})$.
\item The conductances $\aa$ may vanish with positive probability.
\end{itemize}
Therefore, following \cite{Papanicolaou-Varadhan-79}, we regularize the equation by adding a $0$th order term and consider for $T>0$ the modified corrector equation
\begin{equation}\label{eq:cor-modified}
  \frac{1}{T}\phi_T(x)+\nabla^*\aa(x)(\nabla\phi_T(x)+e )=0\qquad\mbox{
    for all $x\in\Z^d$ and $\aa\in\Omega$}.
\end{equation}
Thanks to the regularization, \eqref{eq:cor-modified} admits (for all $T>0$) a unique solution in $(\mathcal S,\|\cdot\|_{L^2(\Omega)})$ as follows from Riesz' representation theorem. 
\begin{definition}[modified corrector]
  \label{def:1}
  The unique solution $\phi_T\in(\mathcal
  S,\|\cdot\|_{L^2(\Omega)})$ to \eqref{eq:cor-modified} is called the
  modified corrector.
\end{definition}
We think about the modified corrector as an approximation for the stationary corrector and hope to recover a solution to \eqref{eq:cor-equation-phys} in the limit $T\uparrow\infty$. This is possible as soon as we have estimates on (some) moments of $\phi_T$ that are uniform in $T$ --- this is the main result of the paper:
\begin{theo}[Moment bounds for the modified corrector]
 \label{T1}
 Let $d>2$ and $\expec{\cdot}$ satisfy Assumption~\ref{A} for some $\rho$ and
 $\Lambda$. Let
 $\phi_T$ denote the modified corrector as defined in Definition~\ref{def:1}. Then
 for all $T>0$ and $1\leq p<\infty$ we have
 \begin{equation}\label{eq:42}
   \expec{|\phi_T|^{p}}^{\frac{1}{p}}\lesssim 1.
 \end{equation}
 Here $\lesssim$ means $\leq$ up to a constant that
 only depends on $p$, $\Lambda$, $\rho$, and $d$.
\end{theo}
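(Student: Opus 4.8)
The plan is to derive the uniform moment bound \eqref{eq:42} by combining the Spectral Gap Estimate (A3) with pointwise (in $\aa$) elliptic estimates on the modified corrector and the associated Green's function, in the spirit of \cite{GO1,GNO1}, but carefully tracking the degeneracy of $\aa$ through the weighted coercivity estimate of Lemma~\ref{lem:coercivity-phys}. First I would set up a representation of the vertical derivative $\frac{\partial \phi_T}{\partial \bb}$: differentiating the modified corrector equation \eqref{eq:cor-modified} with respect to the conductance $\aa(\bb)$ shows that $\frac{\partial \phi_T}{\partial \aa(\bb)}$ solves an equation of the same type with a right-hand side supported on the single bond $\bb$ and proportional to $\nabla \phi_T(\bb) + e(\bb)$; hence it can be written via the (modified) elliptic Green's function $G_T(\aa;x,y)$ of $\frac{1}{T} + \nabla^*\aa\nabla$ as $\frac{\partial \phi_T}{\partial \aa(\bb)}(x) \sim (\nabla\phi_T(\bb)+e(\bb))\,\nabla_{y_\bb} G_T(\aa;x,\cdot)\big|_{\bb}$. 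The first key step is therefore to insert this representation into the Spectral Gap Estimate applied to $\zeta = \phi_T(\aa,0)$ (or rather to $\phi_T$ averaged suitably), which yields $\expec{|\phi_T - \expec{\phi_T}|^2} \lesssim \sum_{\bb} \expec{ (\nabla\phi_T(\bb)+e(\bb))^2 \,|\nabla G_T(\bb,\cdot)|^2 }$, modulo care with the fact that the vertical derivative is a conditional-expectation difference rather than a classical derivative (this is handled as in \cite{GNO1} by dominating $\frac{\partial\zeta}{\partial\bb}$ by an integral of the classical derivative over the single coordinate $\aa(\bb)$, which is where (A2+) — the deletion of a bond — enters).

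The second, more substantial step is the deterministic Green's function estimate of Proposition~\ref{P1}: a pointwise-in-$\aa$, dyadically-space-averaged bound on $\nabla G_T$. The plan is to prove it by a Caccioppoli/De Giorgi-type argument adapted to the degenerate setting. Since $\lambda_0|\nabla u|^2 \le \nabla u\cdot \aa\nabla u$ fails, I would replace the usual energy estimate by the weighted coercivity inequality of Lemma~\ref{lem:coercivity-phys}, which controls $\sum_{x\in B}u^2$ (or $\sum_{\bb\in B}|\nabla u(\bb)|^2$ with weights) by $\sum_{\bb\in B}\aa(\bb)|\nabla u(\bb)|^2$ at the cost of a factor built from spatial averages over $B$ of $\dist_\aa(x,x+e_i)$ between neighbors — i.e.\ of $\aa(\bb)^{-1}$ along connecting paths. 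Feeding this into the iteration gives control of $\nabla G_T$ on dyadic annuli by a (random, but finite a.s.) geometric constant $C(\aa,B)$ times the natural decay $|x|^{-(d-1)}$ (or the corresponding $T$-modified decay $e^{-c|x|/\sqrt T}$-type bound), uniformly in $T$. I would keep all $T$-dependence benign by exploiting the mass term $\frac{1}{T}\phi_T$ as an additional coercive contribution, never in a way that blows up as $T\to\infty$.

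The final step is to close the estimate stochastically. Plugging Proposition~\ref{P1} into the output of the Spectral Gap step produces, schematically, $\expec{|\phi_T|^2} \lesssim 1 + \sum_{\bb}\expec{(\nabla\phi_T(\bb)+e(\bb))^2 \, C(\aa,\bb)^2 \,\langle\text{decay}\rangle(\bb)}$; I would apply H\"older's inequality in $\expec{\cdot}$ to separate the corrector-gradient factor, the geometric weights $C(\aa,\cdot)$, and the summable deterministic decay, using (A2) / (A2+) to bound arbitrary moments of the geometric weights, and using the (zeroth-order, $T$-uniform) energy estimate $\expec{\sum_\bb \aa(\bb)|\nabla\phi_T(\bb)|^2} \lesssim 1$ together with the same weighted coercivity to convert $\aa$-weighted gradient control into the needed moment of $\nabla\phi_T$. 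This should give $\expec{|\phi_T|^2}\lesssim 1$; the passage to general $p<\infty$ then follows by a bootstrap — reusing the Spectral Gap Estimate with $\zeta = |\phi_T|^{p/2}$-type test functions (or iterating the $L^q\to L^{q'}$ improvement), exactly as in \cite{GNO1}, since all the deterministic inputs are already $p$-insensitive.

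\textbf{Main obstacle.} The crux is Proposition~\ref{P1}: obtaining a Green's-function gradient bound that is pointwise in $\aa$ yet only mildly sensitive to the geometry of the (disconnected-with-positive-probability) open cluster. The difficulty is that the degenerate weighted coercivity (Lemma~\ref{lem:coercivity-phys}) introduces a random prefactor into every Caccioppoli step, so one must run the De Giorgi iteration while controlling the accumulation of these prefactors over dyadic scales and then ensure — via (A2)/(A2+) — that the resulting random constant has all moments finite. Interfacing this random constant cleanly with the Spectral Gap bound, via H\"older, without losing the summability of the spatial decay, is the delicate point of the whole argument.
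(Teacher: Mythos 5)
Your high-level architecture is essentially that of the paper: Spectral Gap plus a Green's-function representation of the vertical derivative (using (A2+) and a weight $\omega_0$ built from the chemical distance with the bond deleted, cf.\ Lemma~\ref{lem:2}), a pointwise-in-$\aa$ dyadically averaged estimate on $\nabla G_T$ (Proposition~\ref{P1}) obtained by replacing the uniform-ellipticity step with the weighted coercivity of Lemma~\ref{lem:coercivity-phys}, and a final H\"older/summation over dyadic annuli. Those are the right ingredients. However, there are two genuine gaps in how you close the argument.

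First, the closing step as you describe it cannot work. After plugging Proposition~\ref{P1} into the spectral gap output, the integrand on each annulus is $(\nabla\phi_T(\bb)+e(\bb))^2\,\omega_0^4(\bb)\,C(\aa)^{\beta}$; since $\omega_0$ and $C$ are unbounded random variables, you must apply H\"older in $\expec{\cdot}$ to separate them from the gradient factor. That inevitably produces $\expec{|\nabla\phi_T|^{2r}}^{1/r}$ for some $r>1$, i.e.\ a strictly higher moment of the gradient. The energy estimate $\expec{\sum_\bb\aa(\bb)|\nabla\phi_T(\bb)|^2}\lesssim 1$ together with the weighted coercivity of Lemma~\ref{lem:coercivity-prob} only yields weighted $L^2$ control of $\nabla\phi_T$; it does not give $L^{2r}$ for $r>1$. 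The paper's resolution is the Caccioppoli estimate in probability (Lemma~\ref{L:CEP}), obtained by testing the modified corrector equation with $\phi_T^{2p+1}$ and using discrete replacements of the Leibniz rule (Lemma~\ref{lem:1}), which bounds $\expec{|\nabla\phi_T|^{2p+1}}^{1/(2p+1)}$ by $\expec{\phi_T^{2p}}^{p/(2p(p+1))}$; since $p/(p+1)<1$, the $\phi_T$-moment can then be absorbed. This absorption is the mechanism that makes the estimate close, and it is missing from your proposal.

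Second, starting at $p=2$ and bootstrapping upward is the wrong order. The summability over dyadic annuli requires the exponent condition \eqref{eq:23}, namely $d(\tfrac1q+\tfrac1{2p}-1)+1<0$ with $\tfrac{2d}{d+2}<q<2$; this can only be satisfied for $p$ large (and $d>2$), and in particular fails for the second moment. The paper therefore works directly with the $p$-version of (SG) (Lemma~\ref{lem:SGp}) for $p$ beyond a threshold and deduces the small-$p$ bounds a posteriori by Jensen. So you should invert your order of quantifiers: fix $p\gg1$ and $q$ close to $2$ first, run the dyadic argument, close with the Caccioppoli-in-probability absorption, and only then descend to arbitrary $p$. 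A minor remark: Proposition~\ref{P1} is not proved in the paper by a De Giorgi iteration; a single pass through Sobolev--Poincar\'e, a truncation/Chebyshev argument, and one Caccioppoli step with a carefully tuned cut-off suffices. Your iteration idea would also work but is more than is needed.
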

Since the estimate in Theorem \ref{T1} is uniform in $T$ we get as a corollary:
\begin{corollary}
  \label{C1}
  Let $d>2$ and $\expec{\cdot}$ satisfy Assumption~\ref{A}  for some $\rho$ and
  $\Lambda$. Then the corrector equation
  \eqref{eq:cor-equation-phys} has a unique stationary solution $\phi\in(\mathcal S,\|\cdot\|_{L^2(\Omega})$ with $\expec{\phi}=0$. Moreover, we have
  \begin{equation*}
    \expec{|\phi|^p}^{\frac{1}{p}}\lesssim 1
  \end{equation*}
  for all $1\leq p<\infty$. Here $\lesssim$ means $\leq$ up to a  constant that
  only depends on $p$, $\Lambda$, $\rho$ and $d$.
\end{corollary}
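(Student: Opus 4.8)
The plan is to deduce Corollary~\ref{C1} from Theorem~\ref{T1} by a compactness-and-limit argument in $T\uparrow\infty$. First I would record the elementary energy estimate for the modified corrector: testing \eqref{eq:cor-modified} with $\phi_T$ in $(\mathcal S,\|\cdot\|_{L^2(\Omega)})$ and using the discrete integration by parts \eqref{int-by-parts} together with $0\le\aa\le1$ gives
\begin{equation*}
  \frac1T\expec{|\phi_T|^2}+\expec{\nabla\phi_T\cdot\aa\nabla\phi_T}
  =-\expec{\nabla\phi_T\cdot\aa e}\le \tfrac12\expec{\nabla\phi_T\cdot\aa\nabla\phi_T}+\tfrac12\expec{e\cdot\aa e},
\end{equation*}
so that $\expec{\nabla\phi_T\cdot\aa\nabla\phi_T}\le\expec{e\cdot\aa e}\le d$ and in particular $\expec{|\nabla\phi_T|^2_{\aa}}:=\expec{\nabla\phi_T\cdot\aa\nabla\phi_T}$ is bounded uniformly in $T$. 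Combined with the moment bound \eqref{eq:42} of Theorem~\ref{T1} for $p=2$, the family $(\phi_T)_{T>1}$ is bounded in $L^2(\Omega)$.

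Next I would extract a limit. Since $(\phi_T)$ is bounded in the Hilbert space $L^2(\Omega)$ and $(\aa^{1/2}\nabla\phi_T)$ is bounded in $L^2(\Omega)^{\B^d}$ (more precisely the appropriate space of stationary bond fields with the weighted inner product), along a subsequence $T_k\to\infty$ we have $\phi_{T_k}\rightharpoonup\phi$ weakly in $L^2(\Omega)$ and $\aa^{1/2}\nabla\phi_{T_k}\rightharpoonup G$ weakly; identifying $G=\aa^{1/2}\nabla\phi$ follows because $\nabla$ is a bounded linear (hence weakly continuous) operator on stationary fields by discreteness. Passing to the limit in the weak formulation of \eqref{eq:cor-modified} — i.e. $\frac1{T_k}\expec{\phi_{T_k}\psi}+\expec{\nabla\psi\cdot\aa(\nabla\phi_{T_k}+e)}=0$ for all $\psi\in\mathcal S$ — the first term vanishes (it is $O(1/T_k)$ times a bounded quantity) and the second converges, yielding $\expec{\nabla\psi\cdot\aa(\nabla\phi+e)}=0$ for all stationary $\psi$, which is the weak form of \eqref{eq:cor-equation-phys}; since $\nabla^*(\aa\nabla)$ is bounded on $\mathcal S$ this is equivalent to the pointwise (in $x$, a.s.\ in $\aa$) equation. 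The stationarity of $\phi$ is inherited from that of each $\phi_{T_k}$ and the closedness of $\mathcal S$ under weak $L^2(\Omega)$-limits. Normalizing by subtracting $\expec{\phi}$ we may assume $\expec{\phi}=0$. The moment bound $\expec{|\phi|^p}^{1/p}\lesssim1$ for every $p<\infty$ then follows from \eqref{eq:42} by lower semicontinuity of the $L^p(\Omega)$-norm under weak $L^2(\Omega)$ convergence (for $p\le2$ directly; for $p>2$ one first passes to a weak limit in $L^p(\Omega)$ along a further subsequence, using that \eqref{eq:42} bounds $\phi_{T_k}$ in every $L^p(\Omega)$, and checks the limit agrees with $\phi$).

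For uniqueness: if $\phi,\tilde\phi\in\mathcal S$ both solve \eqref{eq:cor-equation-phys} with zero mean, then $w:=\phi-\tilde\phi$ is stationary, solves $\nabla^*(\aa\nabla w)=0$, and has finite second moment. Testing with $w$ gives $\expec{\nabla w\cdot\aa\nabla w}=0$, hence $\aa(\bb)|\nabla w(\bb)|^2=0$ almost surely for every $\bb$, i.e.\ $\nabla w(\bb)=0$ whenever $\aa(\bb)>0$. Here I would invoke the weighted coercivity estimate alluded to in the introduction (Lemma~\ref{lem:coercivity-phys}/\ref{lem:coercivity-prob}), or more elementarily the following chaining argument: for any two vertices $x,y$ and any open path $\pi$ from $x$ to $y$ (one exists a.s.\ by (A2)), $w(y)-w(x)=\sum_{\bb\in\pi}\pm\nabla w(\bb)=0$ since every bond of an open path has $\aa(\bb)>0$; thus $w(\aa,\cdot)$ is a.s.\ constant in $x$, and stationarity plus $\expec{w}=0$ forces $w\equiv0$.

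The main obstacle I expect is not the soft functional-analytic limiting procedure — which is routine once \eqref{eq:42} is in hand — but rather making sure the two passages to the limit (weak $L^2$ for the equation, weak $L^p$ for the moment bound) are done in a consistent space of stationary random fields where $\nabla$ and $\nabla^*$ behave well, and in particular that the weak limit of $\nabla\phi_{T_k}$ is genuinely $\nabla\phi$ and that the a.s.\ pointwise equation (not merely the variational identity against stationary test fields) holds; the latter is where one uses discreteness crucially. A secondary subtlety is the uniqueness step in the genuinely degenerate case, where $|\nabla w|$ need not vanish on closed bonds — this is precisely where assumption (A2) (a.s.\ finiteness of the chemical distance, equivalently a.s.\ existence of open connecting paths) enters and replaces the missing uniform ellipticity.
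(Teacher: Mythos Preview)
The paper does not give an explicit proof of Corollary~\ref{C1}; it simply remarks ``Since the estimate in Theorem~\ref{T1} is uniform in $T$ we get as a corollary'' and leaves the passage to the limit to the reader. Your weak-compactness argument is exactly the standard way to make this precise, and it is correct in substance: the uniform $L^p$-bounds from Theorem~\ref{T1} give weak subsequential limits, the $O(1/T)$ term drops out of the weak formulation, discreteness makes $\nabla$ bounded (hence weakly continuous) on stationary fields, and lower semicontinuity of the $L^p$-norm transfers the moment bounds.

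One small point to make explicit in your uniqueness step: after the chaining argument shows $w(\aa,\cdot)$ is a.s.\ constant in $x$, you conclude ``stationarity plus $\expec{w}=0$ forces $w\equiv0$.'' Stationarity alone only gives that $w(\aa,0)$ is shift-invariant; to deduce it is a.s.\ constant you need ergodicity of $\expec{\cdot}$, which is indeed a consequence of the Spectral Gap (A3) but should be named. (Alternatively, since $\expec{\phi_T}=0$ follows directly from taking the expectation of \eqref{eq:cor-modified} and using $\expec{\nabla^*F}=0$ for stationary $F$, you do not need the separate normalization step for existence.)
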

As mentioned in the introduction the corrector can be used to
establish invariance principles for random walks in random
environments. Suppose
 that $\expec{\cdot}$ satisfies Assumption~\ref{A} for some $\rho$ and
 $\Lambda$. Then, thanks to Corollary~\ref{C1},  for each coordinate direction
 $e_k$ there exist stationary
 correctors $\phi^k\in(\mathcal
 S,\|\cdot\|_{L^2(\Omega})$ with $\expec{\phi^k}=0$ that solve
 (\ref{eq:cor-equation-phys}) with $e=e_k$. Hence, we can consider the random vector field $\chi=(\chi^1,\ldots,\chi^d):\Omega\times\Z^d\to\R^d$ defined by
\begin{equation*}
  \chi^k(\aa,x):=\phi^k(\aa,x)-\phi^k(\aa,x=0).
\end{equation*}
By construction the map $\Z^d\ni x\mapsto
x+\chi(\aa,x)$ is $\aa$-harmonic, has finite second moments, and is shift covariant (i.e.
$\chi(\aa,x+y)-\chi(\aa,x)=\chi(\aa(\cdot+y),x)$. The field $\chi$ is
precisely the ``corrector'' used e.g. in 
\cite{Kipnis-Varadhan-86, Sidoravicius-Sznitman-04} to introduce
harmonic coordinates for which the
random walk in the random environment is a martingale. In particular, in \cite{Sidoravicius-Sznitman-04} Sidovaricius and Sznitman
use $\chi$ to prove a quenched invariance principle
for the random walk in a random environment.
A key step in
their argument is to show that $\chi$ has sublinear growth, i.e.
\begin{equation}\label{eq:18}
  \lim\limits_{R\to\infty}\max_{x\in B_R(0)}\frac{|\chi(\aa,x)|}{R}=0\qquad\text{for
    $\expec{\cdot}$-almost every }\aa\in\Omega.
\end{equation}
This property has been established for supercritical bond percolation on $\Z^d$ in
dimension $d\geq 4$ in \cite{Sidoravicius-Sznitman-04} and for $d\geq
2$ in \cite{Berger-Biskup-07,Mathieu-Piatnitski-07}.  The moment
bounds established in our work (under the more restrictive Assumption
\ref{A}) are stronger.
Indeed, from Theorem~\ref{T1} we get \eqref{eq:18} in a stronger
form by the following simple argument: For every $\theta\in(0,1)$, $p>\frac{d}{1-\theta}$ and
  $k=1,\ldots,d$ we have
  \begin{eqnarray*}
    R^{\theta-1}\max_{x\in B_R(0)}|\chi^k(\aa,x)|&\leq&
  |\phi^k(\aa,0)|+R^{\theta-1}\max_{x\in B_R(0)}|\phi^k(\aa,x)|\\
  &\leq&
  |\phi^k(\aa,0)|+\left(R^{-d}\sum_{x\in B_R(0)}|\phi^k(\aa,x)|^{\frac{d}{1-\theta}}\right)^{\frac{1-\theta}{d}}.
\end{eqnarray*}
Hence,
since $\phi^k$ is stationary and $p>\frac{d}{1-\theta}$, the maximal function estimate yields
\begin{equation*}
  \expec{\sup_{R\geq 1}\left(\max_{x\in
        B_R(0)}\frac{|\phi^k(x)|}{R^{1-\theta}}\right)^p}\leq C\expec{|\phi^k|^p}.
\end{equation*}
With the moment bounds of Corollary~\ref{C1} we
get for $\expec{\cdot}$ satisfying Assumption~\ref{A}:
\begin{equation}\label{eq:21}
\forall\theta\in(0,1)\,:\qquad  \lim\limits_{R\to\infty}\max_{x\in
    B_R(0)}\frac{|\chi(\aa,x)|}{R^{1-\theta}}=0\qquad\text{$\expec{\cdot}$-almost surely.}
\end{equation}

\subsection{Outline and Proof of Theorem~\ref{T1}}\label{S3}
The proof of Theorem~\ref{T1} is inspired by the approach in
\cite{GO1} where uniformly elliptic conductances are treated.  The starting point of our argument is the following $p$-version of the
{\em Spectral Gap Estimate} (A3), which we recall from \cite[Lemma~2]{GNO1}:
\begin{lemma}[p-version of (SG)]
  \label{lem:SGp}
  Let $\expec{\cdot}$ satisfy (A3) with constant $\rho>0$. Then for $p\in\mathbb{N}$ and all $\zeta\in
  L^{2p}(\Omega)$ with $\expec{\zeta}=0$ we have
  \begin{equation*}
    \expec{\zeta^{2p}}\lesssim \expec{\left(\sum_{\bb\in\B^d}
        \left(\frac{\partial\zeta}{\partial\bb}\right)^2\right)^p},
  \end{equation*}
  where $\lesssim$ means $\leq$ up to a  constant that
  only depends on $p$, $\rho$ and $d$.
\end{lemma}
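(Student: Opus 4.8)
The plan is to bootstrap the $L^{2}$ Spectral Gap Estimate (A3) up to $L^{2p}$ by induction on $p\in\N$, at each step applying (A3) to the $p$-th power $\zeta^{p}$. The base case $p=1$ is exactly (A3) together with $\expec{\zeta}=0$. For the inductive step fix $p\ge 2$, suppose the claimed bound holds for all strictly smaller integer exponents, and apply (A3) to $\zeta^{p}-\expec{\zeta^{p}}$ (legitimate because $\zeta\in L^{2p}(\Omega)$ gives $\zeta^{p}\in L^{2}(\Omega)$). Writing $\frac{\partial\zeta^{p}}{\partial\bb}:=\zeta^{p}-\expec{\zeta^{p}}_{\bb}$, this yields
\begin{equation*}
  \expec{\zeta^{2p}}=\var{\zeta^{p}}+\bigl(\expec{\zeta^{p}}\bigr)^{2}\;\le\;\frac{1}{\rho}\sum_{\bb\in\B^d}\expec{\Bigl(\frac{\partial\zeta^{p}}{\partial\bb}\Bigr)^{2}}+\bigl(\expec{\zeta^{p}}\bigr)^{2},
\end{equation*}
so it remains to control the two terms on the right. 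The term $\bigl(\expec{\zeta^{p}}\bigr)^{2}$ is of lower order: since $\bigl|\expec{\zeta^{p}}\bigr|\le\expec{|\zeta|^{p}}$, Jensen's inequality bounds it by a power of $\expec{\zeta^{2q}}$ for a suitable even exponent $2q<2p$ (and it vanishes altogether for $p=1$); the induction hypothesis, together with one more use of Jensen, then gives $\bigl(\expec{\zeta^{p}}\bigr)^{2}\lesssim_{p}\expec{\bigl(\sum_{\bb}(\tfrac{\partial\zeta}{\partial\bb})^{2}\bigr)^{p}}$, as required.

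The heart of the matter is the first term, for which I would use a finite-difference Leibniz estimate. Since $\partial_{\bb}$ is a difference operator, not a derivation, the natural bound for $\frac{\partial\zeta^{p}}{\partial\bb}$ carries — besides the expected factor $p\,\zeta^{p-1}\frac{\partial\zeta}{\partial\bb}$ — correction terms involving the single-bond conditional expectation $\expec{\,\cdot\,}_{\bb}$, arising from comparing $\expec{\zeta^{p}}_{\bb}$ with $(\expec{\zeta}_{\bb})^{p}$. Concretely, inserting $c:=\expec{\zeta}_{\bb}$, applying $|a^{p}-b^{p}|\le p\,|a-b|\max(|a|,|b|)^{p-1}$ with $(a,b)=(\zeta,c)$, and using Cauchy--Schwarz on each $\bb$-fiber, one arrives at the pointwise estimate
\begin{equation*}
  \Bigl|\frac{\partial\zeta^{p}}{\partial\bb}\Bigr|\;\lesssim_{p}\;\Bigl(|\zeta|^{p-1}+\expec{|\zeta|^{2(p-1)}}_{\bb}^{1/2}\Bigr)\Bigl(\bigl|\tfrac{\partial\zeta}{\partial\bb}\bigr|+\expec{\bigl(\tfrac{\partial\zeta}{\partial\bb}\bigr)^{2}}_{\bb}^{1/2}\Bigr).
\end{equation*}
Squaring, summing over $\bb$, taking $\expec{\,\cdot\,}$, and using that the conditional expectations $\expec{\,\cdot\,}_{\bb}$ do not depend on the coordinate $\aa(\bb)$ and may thus be pulled in and out of $\expec{\,\cdot\,}_{\bb}$, one is reduced to two kinds of contributions: a \emph{diagonal} one, $\expec{|\zeta|^{2(p-1)}\sum_{\bb}(\tfrac{\partial\zeta}{\partial\bb})^{2}}$, and a \emph{smoothed} one, $\expec{\sum_{\bb}\expec{|\zeta|^{2(p-1)}}_{\bb}\,\expec{(\tfrac{\partial\zeta}{\partial\bb})^{2}}_{\bb}}$.

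The diagonal contribution is handled by Hölder's inequality with exponents $\bigl(\tfrac{p}{p-1},p\bigr)$, which bounds it by $\expec{\zeta^{2p}}^{(p-1)/p}\,\expec{\bigl(\sum_{\bb}(\tfrac{\partial\zeta}{\partial\bb})^{2}\bigr)^{p}}^{1/p}$; fed back into the first display, this is absorbed into $\expec{\zeta^{2p}}$ by Young's inequality, leaving only a constant multiple of the desired right-hand side. The smoothed contribution is the step I expect to be the main obstacle: one must pass from the fiber-averaged gradient $\sum_{\bb}\expec{(\tfrac{\partial\zeta}{\partial\bb})^{2}}_{\bb}$ back to the genuine gradient $\sum_{\bb}(\tfrac{\partial\zeta}{\partial\bb})^{2}$ inside an $L^{p}(\Omega)$-norm, and control vertical maximal functions of the form $\sup_{\bb}\expec{|\zeta|^{2(p-1)}}_{\bb}$ by the $L^{p}(\Omega)$-norm of $|\zeta|^{2(p-1)}$. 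A naive Hölder splitting over $\bb$ fails here, since it produces the divergent sum $\sum_{\bb}\expec{|\zeta|^{2p}}_{\bb}$; instead one must retain the $\ell^{2}$-in-$\bb$ structure and exploit the product (i.i.d.) structure of the ensemble — the commutativity and independence of the single-bond conditional expectations, in the spirit of the Efron--Stein tensorization of higher moments — so that the comparison holds with a constant depending only on $p$, $\rho$ and $d$. Granting these estimates, one reinserts them, absorbs the $\expec{\zeta^{2p}}^{(p-1)/p}$-factors by Young, and reabsorbs the remaining lower powers of $\zeta$ via the induction hypothesis; this closes the induction and proves the lemma.
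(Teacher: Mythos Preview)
The paper does not prove this lemma at all: it simply quotes it from \cite[Lemma~2]{GNO1}. So there is no in-house argument to compare against; your proposal has to be judged on its own.

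Your overall architecture --- induction on $p$, applying (A3) to $\zeta^{p}$, a discrete Leibniz estimate for $\partial_{\bb}(\zeta^{p})$, H\"older with exponents $(\tfrac{p}{p-1},p)$, and Young to absorb the $\expec{\zeta^{2p}}^{(p-1)/p}$ factor --- is the standard one and matches the strategy of \cite{GNO1}. The treatment of the lower-order term $\expec{\zeta^{p}}^{2}$ and of the ``diagonal'' contribution $\expec{|\zeta|^{2(p-1)}\sum_{\bb}(\partial_{\bb}\zeta)^{2}}$ is fine.

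The genuine gap is precisely where you flag it: the ``smoothed'' contribution. After the tower property it reduces to controlling $\expec{|\zeta|^{2(p-1)}\sum_{\bb}\expec{(\partial_{\bb}\zeta)^{2}}_{\bb}}$, and via H\"older this would follow from
\[
\expec{\Big(\sum_{\bb}\expec{(\partial_{\bb}\zeta)^{2}}_{\bb}\Big)^{p}}\;\lesssim\;\expec{\Big(\sum_{\bb}(\partial_{\bb}\zeta)^{2}\Big)^{p}}.
\]
You propose to obtain this by ``exploiting the product (i.i.d.) structure of the ensemble''. But the lemma is stated under (A3) alone, with no product hypothesis, so this is not available; the single-bond conditional expectations $\expec{\cdot}_{\bb}$ need not commute, and the Efron--Stein/tensorization heuristics you invoke are not justified. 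As written, the argument therefore does not close.

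The fix is not to compare $\sum_{\bb}\expec{(\partial_{\bb}\zeta)^{2}}_{\bb}$ with $\sum_{\bb}(\partial_{\bb}\zeta)^{2}$ at all, but to organize the Leibniz step so that the only fiber-averaged quantities that appear are powers of $\partial_{\bb}\zeta$ itself (never of $\zeta$). Concretely, expand $\zeta=c+\delta$ with $c:=\expec{\zeta}_{\bb}$ and $\delta:=\partial_{\bb}\zeta$, write $\partial_{\bb}(\zeta^{p})=\sum_{k=1}^{p}\binom{p}{k}c^{p-k}\,\partial_{\bb}(\delta^{k})$, and use two facts: (i) the $k=1$ term equals $p\,c^{p-1}\delta$ with \emph{no} conditional, since $\expec{\delta}_{\bb}=0$; (ii) for $k\ge 2$ one bounds $|c|^{p-k}\lesssim|\zeta|^{p-k}+|\delta|^{p-k}$ pointwise, and $|\partial_{\bb}(\delta^{k})|^{2}\le 2\delta^{2k}+2\expec{|\delta|^{2k}}_{\bb}$. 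After taking $\expec{\cdot}$ the residual conditionals are of the form $\expec{|\delta|^{2k}}_{\bb}$ with $k\ge 2$, and these are harmless because $\expec{\,\expec{|\delta|^{2k}}_{\bb}\,}=\expec{|\delta|^{2k}}$ and $\sum_{\bb}|\delta_{\bb}|^{2k}\le\big(\sum_{\bb}|\delta_{\bb}|^{2}\big)^{k}$. Carrying this through gives $\sum_{\bb}\expec{(\partial_{\bb}\zeta^{p})^{2}}\lesssim\sum_{k=1}^{p}\expec{|\zeta|^{2p}}^{(p-k)/p}\expec{T^{p}}^{k/p}$ with $T=\sum_{\bb}(\partial_{\bb}\zeta)^{2}$, and then Young plus absorption closes the induction under (A3) alone --- no product structure needed.
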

Applied to $\zeta=\phi_T(x=0)$, this estimate yields a bound
on stochastic moments of $\phi_T$ in terms of the vertical derivatives
$\frac{\partial\phi_T(x=0)}{\partial\bb}$, $\bb\in\B^d$ (see
Definition~\ref{D:vertical}). Heuristically, we expect the vertical derivative
$\frac{\partial\phi_T(x=0)}{\partial\bb}$ to behave as the classical partial
derivative $\frac{\partial\phi_T(x=0)}{\partial\aa(\bb)}$. As we shall see, the latter
admits the Green's function representation
\begin{equation}\label{eq:43}
  \frac{\partial\phi_T(x=0)}{\partial\aa(\bb)}=-\nabla G_T(\aa,\bb,0)(\nabla\phi_T(\bb)+e(\bb)).
\end{equation}
Here $G_T$ denotes the Green's function associated with
$(\frac{1}{T}+\nabla^*\aa\nabla)$ and is defined as follows:
\begin{definition}
  \label{D:G}
  For $T>0$ the Green's function
  $G_T:\Omega\times\Z^d\times\Z^d\to\R$ is defined as follows: For
  each $\aa\in\Omega$ and $y\in\Z^d$ the function $x\mapsto
  G_T(\aa,x,y)$ is the unique  solution in $\ell^2(\Z^d)$ to 
  \begin{equation}\label{eq:D:G}
    \frac{1}{T} G_T(\aa,\cdot,y)+\nabla^*\aa\nabla G_T(\aa,\cdot,y)=\delta(\cdot-y).
  \end{equation}
\end{definition}
For uniformly elliptic conductances we have
$\frac{\partial\phi_T(x=0)}{\partial\bb}\sim\frac{\partial\phi_T(x=0)}{\partial\aa(\bb)}$
up to a constant that only depends on the ratio of ellipticity. In the
case of degenerate ellipticity this is no longer true. However,
the discrepancy between the vertical and classical partial derivative
of $\phi_T$ can be quantified in terms of  weights defined as follows:
We introduce the weight function $\omega:\Omega\times\B^d\to[0,\infty]$ as
\begin{equation}\label{D:omega}
    \omega(\aa,\bb):=(\dist_{\aa}(x_{\bb},y_{\bb}))^{d+2}\qquad\qquad (\aa\in\Omega,\ \bb=\{x_{\bb},y_{\bb}\}\in\B^d).
\end{equation}
For $\bb\in\B^d$ and $\aa\in\Omega$ we denote by
$\aa^{\bb,0}$ the conductance field obtained by ``deleting'' the bond
$\bb$ (i.~e. $\aa^{\bb,0}(\bb')=\aa(\bb')$ for all $\bb'\neq\bb$ and
$\aa^{\bb,0}(\bb)=0$), and introduce the modified weight $\omega_0$ as
\begin{equation}\label{D:tildeomega}
  \omega_0(\aa,\bb):=\omega(\aa^{\bb,0},\bb).
\end{equation}

\begin{lemma}
  \label{lem:2}
  Assume that $\expec{\cdot}$ satisfies (A1) and (A2+). For $T>0$ let $\phi_T$ denote the modified corrector. Then for all $\bb\in\B^d$ we have
  \begin{equation*}
    \left|\frac{\partial\phi_T(x=0)}{\partial\bb}\right|\lesssim \omega_0^{2}(\bb)\left|\nabla G_T(\bb,0)\right|\,\left|\nabla\phi_T(\bb)+e(\bb)\right|.
  \end{equation*}
  Here  $\lesssim$ means $\leq$ up to a  constant that only depends on $d$.
\end{lemma}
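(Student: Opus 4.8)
\emph{Strategy.} The asserted inequality is pointwise in $\aa$, so the plan is to fix $\bb\in\B^d$ and a realization $\aa\in\Omega$ and abbreviate $t:=\aa(\bb)\in[0,1]$. The vertical derivative $\frac{\partial\phi_T(x=0)}{\partial\bb}=\phi_T(0)-\expec{\phi_T(0)}_\bb$ records only the dependence of the random variable $\phi_T(0)$ on the single coordinate $\aa(\bb)$. Freezing the remaining conductances $\aa(\bb')$, $\bb'\neq\bb$, and writing $\aa_\sigma$ for the conductance field that agrees with $\aa$ off $\bb$ and equals $\sigma\in[0,1]$ on $\bb$ (so $\aa_t=\aa$ and $\aa_0=\aa^{\bb,0}$), let $\phi_T^\sigma$ and $G_T^\sigma$ denote the modified corrector and the Green's function of $\frac1T+\nabla^*\aa_\sigma\nabla$. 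Since $\expec{\phi_T(0)}_\bb$ is an average of $\phi_T^\sigma(0)$ against the conditional law of $\aa(\bb)$ (a probability measure on $[0,1]$), one has
\begin{equation*}
  \Big|\frac{\partial\phi_T(x=0)}{\partial\bb}\Big|\le\sup_{\sigma\in[0,1]}\big|\phi_T^t(0)-\phi_T^\sigma(0)\big|,
\end{equation*}
so it suffices to control the oscillation of $\sigma\mapsto\phi_T^\sigma(0)$ on $[0,1]$. Note that $\phi_T^\sigma$, $G_T^\sigma$ and the scalar $a$ below are finite for every $\sigma\in[0,1]$ --- in particular for the ``deleted'' value $\sigma=0$ --- thanks to the regularizing term $\frac1T$.

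\emph{Single-bond perturbation.} The next step is to turn the $\sigma$-dependence into an explicit rational expression, using that changing $\aa(\bb)$ perturbs $\nabla^*\aa\nabla$ by $\sigma$ times a rank-one operator supported on $\bb$. Subtracting the defining equations of $\phi_T^\sigma$ and $\phi_T^0$ shows that $\phi_T^\sigma-\phi_T^0$ is the decaying solution of $(\tfrac1T+\nabla^*\aa^{\bb,0}\nabla)(\phi_T^\sigma-\phi_T^0)=-\sigma\big(\nabla\phi_T^\sigma(\bb)+e(\bb)\big)\nabla^*\boldsymbol{1}_\bb$, where $\boldsymbol{1}_\bb$ is the field on $\B^d$ equal to $1$ on $\bb$ and $0$ elsewhere; the Green's function representation then gives
\begin{equation*}
  \phi_T^\sigma(x)-\phi_T^0(x)=-\sigma\big(\nabla\phi_T^\sigma(\bb)+e(\bb)\big)\,\nabla G_T^0(\bb,x)\qquad(x\in\Z^d),
\end{equation*}
and the analogous computation for the Green's function yields $G_T^\sigma(x,0)-G_T^0(x,0)=-\sigma\,\nabla G_T^\sigma(\bb,0)\,\nabla G_T^0(\bb,x)$. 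Setting $a:=\nabla\nabla G_T^0(\bb,\bb)$, the double discrete derivative of $(x,y)\mapsto G_T(\aa^{\bb,0},x,y)$ taken at $\bb$ in both slots, applying $\nabla$ at the bond $\bb$ in the variable $x$ in these two identities and solving for the quantities evaluated at $\bb$, one gets
\begin{equation*}
  \nabla\phi_T^\sigma(\bb)+e(\bb)=\frac{\nabla\phi_T^0(\bb)+e(\bb)}{1+\sigma a},\qquad\nabla G_T^\sigma(\bb,0)=\frac{\nabla G_T^0(\bb,0)}{1+\sigma a}.
\end{equation*}
Specializing to $\sigma=t$ identifies $\nabla\phi_T^0(\bb)+e(\bb)=(1+ta)\big(\nabla\phi_T(\bb)+e(\bb)\big)$ and $\nabla G_T^0(\bb,0)=(1+ta)\,\nabla G_T(\bb,0)$. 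Substituting back into the first displayed identity and using $\big|\tfrac{\sigma}{1+\sigma a}-\tfrac{\sigma'}{1+\sigma'a}\big|\le\tfrac1{1+a}$ for $\sigma,\sigma'\in[0,1]$, $a\ge0$, together with $t\le1$, leads to
\begin{equation*}
  \sup_{\sigma,\sigma'\in[0,1]}\big|\phi_T^\sigma(0)-\phi_T^{\sigma'}(0)\big|\le(1+a)\,\big|\nabla G_T(\bb,0)\big|\,\big|\nabla\phi_T(\bb)+e(\bb)\big|.
\end{equation*}

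\emph{Bounding $a$ by the chemical distance.} It remains to estimate $a=\nabla\nabla G_T^0(\bb,\bb)$. First, $a\ge0$: it equals the Dirichlet energy $\tfrac1T\|\psi\|_{\ell^2(\Z^d)}^2+\sum_{\bb'\in\B^d}\aa^{\bb,0}(\bb')\big(\nabla\psi(\bb')\big)^2$ of the dipole potential $\psi:=G_T^0(\cdot,y_\bb)-G_T^0(\cdot,x_\bb)$, which solves $(\tfrac1T+\nabla^*\aa^{\bb,0}\nabla)\psi=\delta_{y_\bb}-\delta_{x_\bb}$. Second, by the dual (Thomson) variational principle, $a$ is bounded above by the energy $\sum_{\bb'\in\B^d}\aa^{\bb,0}(\bb')^{-1}q(\bb')^2$ of any unit flow $q$ from $x_\bb$ to $y_\bb$ in the network $\aa^{\bb,0}$; choosing for $q$ the unit flow carried by a single path from $x_\bb$ to $y_\bb$ and optimizing over paths yields $a\le\dist_{\aa^{\bb,0}}(x_\bb,y_\bb)=\omega_0(\aa,\bb)^{1/(d+2)}$. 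Since any path in $\aa^{\bb,0}$ joining $x_\bb$ and $y_\bb$ has at least one edge, each of resistance $\aa^{\bb,0}(\bb')^{-1}\ge1$ (recall $\aa\le1$), we have $\dist_{\aa^{\bb,0}}(x_\bb,y_\bb)\ge1$, hence $\omega_0(\aa,\bb)\ge1$ and
\begin{equation*}
  1+a\le2\,\omega_0(\aa,\bb)^{1/(d+2)}\le2\,\omega_0(\aa,\bb)^2.
\end{equation*}
Combining this with the oscillation bound of the previous step proves the claim with an absolute constant (in particular one depending only on $d$). In the remaining case that $x_\bb$ and $y_\bb$ lie in different connected components of $\aa^{\bb,0}$, one has $\omega_0(\aa,\bb)=+\infty$ and the inequality is trivially true.

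\emph{Main obstacle.} The single non-routine ingredient is the bound $a=\nabla\nabla G_T^0(\bb,\bb)\le\dist_{\aa^{\bb,0}}(x_\bb,y_\bb)$: one has to recognize the mixed second difference of the bond-deleted Green's function as (being bounded by) the effective resistance of the graph with the bond $\bb$ removed, and then control this resistance by testing against the flow supported on a shortest chemical path. This is precisely the mechanism for which the weight $\omega_0$, built from $\dist_{\aa^{\bb,0}}$ (the chemical distance after deleting $\bb$) raised to the power $d+2$, is designed, so that the perturbation estimate stays finite as $\aa(\bb)\downarrow0$. Everything else is linear-algebra bookkeeping around the rank-one perturbation, together with the observation that the zeroth-order term $\frac1T$ keeps $\phi_T^\sigma$, $G_T^\sigma$ and $a$ finite for all $\sigma\in[0,1]$ and legitimizes the division by $1+\sigma a$.
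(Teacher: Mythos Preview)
Your proof is correct and takes a genuinely different route from the paper's. The paper proceeds through three auxiliary lemmas: it differentiates the corrector and Green's function in $\aa(\bb)$ to obtain the ODE system $\varphi'=-2g\varphi$, $g'=-g^2$ (with $\varphi=\partial\phi_T(0)/\partial\aa(\bb)$ and $g=\nabla\nabla G_T(\bb,\bb)$), integrates these to bound the oscillation of $\varphi$ by $\big(1+\tfrac{\aa(\bb)}{1-\aa(\bb)g(\aa(\bb))}\big)|\varphi(\aa(\bb))|$, then shows via another ODE argument that this prefactor is $\lesssim(1+g(0))^2$, and finally bounds $g(0)=\nabla\nabla G_T(\aa^{\bb,0},\bb,\bb)\lesssim\omega_0$ by applying the coercivity Lemma~\ref{lem:coercivity-phys} to the energy identity \eqref{eq:46}. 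You bypass all of this by writing down the exact rank-one resolvent formula $\phi_T^\sigma(0)-\phi_T^0(0)=-\tfrac{\sigma}{1+\sigma a}(\nabla\phi_T^0(\bb)+e(\bb))\nabla G_T^0(\bb,0)$ with $a=\nabla\nabla G_T^0(\bb,\bb)$, which immediately gives an oscillation bound with prefactor $(1+a)$; and you bound $a$ by Thomson's principle (test the dipole against a unit flow along a shortest path), obtaining the sharper $a\le\dist_{\aa^{\bb,0}}(x_\bb,y_\bb)=\omega_0^{1/(d+2)}$ rather than the paper's $a\lesssim\omega_0$. Your argument is shorter and actually yields the stronger conclusion $\big|\tfrac{\partial\phi_T(0)}{\partial\bb}\big|\lesssim\omega_0^{1/(d+2)}|\nabla G_T(\bb,0)||\nabla\phi_T(\bb)+e(\bb)|$, which you then relax to $\omega_0^2$ only to match the stated lemma. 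The paper's ODE machinery, on the other hand, is more systematic and would adapt to settings where an explicit rank-one formula is not available.
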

To benefit from \eqref{eq:43} (in the form of Lemma~\ref{lem:2}) we require an {\em estimate on the gradient of the
  Green's function}. As it is well known, the constant coefficient Green's function
$G_T^0(x):=G_T(\aa={\boldsymbol 1},x,0)$ (which is associated with the modified
Laplacian $\frac{1}{T}+\nabla^*\nabla$) satisfies the pointwise estimate 
\begin{equation}\label{eq:5}
  \forall \bb:=\{x,x+e_i\}\,:\qquad |\nabla G_T^0(\bb)|\lesssim (1+|x|)^{1-d}\qquad\text{uniformly in $T>0$}.
\end{equation}
We require an estimate that captures the same decay in $x$. It is
known from the continuum, uniformly elliptic case, that such an
estimate cannot hold pointwise in $x$ and at the same time pointwise in $\aa$.  In \cite[Lemma~2.9]{GO1}, for  uniformly elliptic conductances, a spatially averaged version of \eqref{eq:5} is established, where the averages are taken over dyadic annuli. The constant in this estimate depends on the conductances only through their contrast of ellipticity. In the degenerate elliptic case, the ellipticity contrast is infinite. In order to keep the optimal decay in $x$, we need to allow the constant in the estimate to depend on $\aa$. For $x_0\in\Z^d$, $R>1$ and $1\leq q<\infty$ consider the spatial average of the weight $\omega$ (cf. \eqref{D:omega}) 
\begin{equation}
  \label{eq:C}
  C(\aa,Q_R(x_0),q):=\left(\frac{1}{|Q_R(x_0)|}\sum_{\bb\in Q_R(x_0)}\omega^q(\aa,\bb)\right)^{\frac{1}{q}}.
\end{equation}
We shall prove the following estimate:
\begin{prop}\label{P1}
  For $R_0>1$ and $k\in\N_0$ consider
  \begin{equation*}
    A_k:=\left\{
      \begin{aligned}
        &Q_{R_0}(0)&&k=0,\\
        &Q_{2^{k}R_0}(0)\setminus Q_{2^{k-1}R_0}(0)&&k\geq 1.
      \end{aligned}
    \right.
  \end{equation*}
  Then for all $\frac{2d}{d+2}<p<2$ we have
  \begin{equation*}
    \left(\frac{1}{|A_k|}\sum_{\bb\in A_k}|\nabla
      G_T(\aa,\bb,0)|^p\right)^{\frac{1}{p}}\lesssim C(\aa)\,2^{k(1-d)},
  \end{equation*}
  where $\lesssim$ means $\leq$ up to a  constant that
  only depends on $R_0$, $d$ and $p$, and
  \begin{equation}    \label{eq:CP1}
    C(\aa):=C^{\frac{\beta}{2}}(\aa,Q_{2^{k+1}R_0}(0),\tfrac{p}{2-p})
  \end{equation}
  with $\beta:=2\frac{p^*-1}{p^*-2}+p^*$ and $p^*:=\frac{dp}{d-p}$. 
\end{prop}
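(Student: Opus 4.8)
The plan is to run a Caccioppoli-type energy argument on dyadic annuli, combined with a discrete Sobolev inequality and the weighted coercivity estimate (Lemma~\ref{lem:coercivity-phys}), and to bootstrap the resulting reverse-H\"older chain so that the elliptic scaling exponent $1-d$ is exactly recovered. Since $G_T(\aa,\cdot,0)$ solves $\frac 1T G_T+\nabla^*\aa\nabla G_T=\delta$, on any annulus $A_k$ with $k\ge 1$ it is $\aa$-harmonic, so testing \eqref{eq:D:G} against $\eta^2 G_T$ for a cutoff $\eta$ supported in a slightly enlarged annulus gives the weighted Caccioppoli bound
\begin{equation*}
  \sum_{\bb}\eta^2(\bb)\,\aa(\bb)|\nabla G_T(\bb)|^2\;\lesssim\;(2^kR_0)^{-2}\sum_{x}G_T^2(x),
\end{equation*}
the sum on the right ranging over a fattened annulus. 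The weighted coercivity estimate of Lemma~\ref{lem:coercivity-phys} converts the left-hand side $\sum\eta^2\aa|\nabla G_T|^2$ into a genuine (unweighted) $\ell^p$-norm of $\nabla G_T$ at the cost of a factor $C(\aa,Q_{2^{k+1}R_0},\tfrac{p}{2-p})$ to a suitable power; this is where the parameter $q=\frac{p}{2-p}$ and the exponent in \eqref{eq:CP1} enter, via H\"older with dual exponents $(\tfrac 2p,\tfrac{2}{2-p})$ against the weight $\omega$.

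First I would reduce the desired bound, via H\"older in space over $A_k$, to an estimate for $\big(\sum_{A_k}|\nabla G_T|^2\big)^{1/2}$ with a slightly worse spatial average of $\omega$; the exponent $\frac{2d}{d+2}<p<2$ is precisely the range in which this reduction is lossless up to the $C(\aa)$ factors. Next, to control the right-hand side $\sum G_T^2$ on a shell I would use the discrete Sobolev inequality $\|G_T\|_{\ell^{2^*}}\lesssim\|\nabla G_T\|_{\ell^2}$ (with $2^*=\frac{2d}{d-2}$), together with the global energy identity $\frac 1T\sum G_T^2+\sum\aa|\nabla G_T|^2=G_T(0)\le 1$ and the crude bound $G_T(0)\lesssim 1$, to get that $\sum_{A_k}G_T^2$ is summable in $k$ with total mass $\lesssim 1$; combined with the shell structure this yields $\big(\frac{1}{|A_k|}\sum_{A_k}G_T^2\big)^{1/2}\lesssim 2^{-kd/2}\cdot(\text{decaying factor})$. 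Feeding this back into Caccioppoli on the next shell and iterating over $O(k)$ dyadic scales upgrades the naive decay to the full $2^{k(1-d)}$, because each iteration improves the exponent by the Sobolev gain $\frac{d}{d-2}$ until it saturates; this bootstrap is the standard way (as in \cite[Lemma~2.9]{GO1}) to turn an $\ell^2$-averaged Caccioppoli estimate into the sharp pointwise-in-$x$ decay rate, and the composition of the finitely many $C(\aa,\cdot)$ factors produced along the way is absorbed into the single constant $C(\aa)$ of \eqref{eq:CP1} by enlarging the cube to $Q_{2^{k+1}R_0}(0)$ and taking the power $\frac\beta2$.

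The main obstacle is bookkeeping the $\aa$-dependent constants through the iteration so that the final exponent $\beta=2\frac{p^*-1}{p^*-2}+p^*$ comes out, rather than some larger power that would spoil the later integrability step: each application of the weighted coercivity trade introduces a factor of $C(\aa,Q,\tfrac{p}{2-p})$ to a power determined by the Sobolev exponent $p^*=\frac{dp}{d-p}$ at that stage, and one must check that the geometric series of these powers sums to $\frac\beta2$ and that all the cubes involved are contained in $Q_{2^{k+1}R_0}(0)$. A secondary technical point is handling the base case $k=0$ (and small $k$), where $G_T$ is not harmonic near $0$ and one must instead use the equation with the $\delta$-source directly, but there the claimed bound with $2^{k(1-d)}\sim 1$ is weak and follows from the global energy estimate plus Lemma~\ref{lem:coercivity-phys} without any iteration.
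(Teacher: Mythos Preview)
Your bootstrap strategy has two genuine gaps that the paper's argument is specifically designed to circumvent. First, your starting input is not available in the degenerate setting: you invoke the identity $\tfrac1T\sum G_T^2+\sum\aa|\nabla G_T|^2=G_T(0)$ and then assert $G_T(0)\le 1$, but there is no uniform bound on $G_T(0)$ when $\aa$ may vanish. The only robust global information is $\tfrac1T\sum_x G_T(x)=1$, equivalently $\|\delta-\tfrac1T G_T\|_{\ell^1}\le 2$, which is what the paper actually uses (cf.~\eqref{eq:17}). Moreover, to feed the energy into the Sobolev embedding $\|G_T\|_{\ell^{2^*}}\lesssim\|\nabla G_T\|_{\ell^2}$ you would need the \emph{unweighted} $\ell^2$-norm of $\nabla G_T$, whereas Lemma~\ref{lem:coercivity-phys} only delivers $\sum\omega^{-1}|\nabla G_T|^2\lesssim\sum\aa|\nabla G_T|^2$; stripping $\omega^{-1}$ on all of $\B^d$ is not possible with a finite constant. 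Second, each pass of Caccioppoli plus the weighted-coercivity trade produces a factor of $C(\aa,Q,\tfrac p{2-p})$ to a positive power; if you iterate over $O(k)$ scales as you write, these powers accumulate with $k$ and you end up with $C(\aa)^{O(k)}$ instead of the fixed power $C(\aa)^{\beta/2}$ demanded by the statement. Even if you meant a fixed number of Moser steps ``until saturation,'' you offer no mechanism by which the accumulated exponent should equal $\tfrac\beta2=\tfrac12\big(2\tfrac{p^*-1}{p^*-2}+p^*\big)$.

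The paper's route is \emph{non-iterative} and uses the $\ell^1$-control on the right-hand side rather than any bound on $G_T(0)$. It first proves a direct $L^1$ mean-oscillation estimate (Lemma~\ref{L:P1:2}): with $f=\delta-\tfrac1T G_T$ one truncates $v_M:=\max\{\min\{G_T-M(G_T),M\},0\}$, bounds its energy by $M\|f\|_{\ell^1}$, applies Sobolev--Poincar\'e combined with one use of Lemma~\ref{lem:coercivity-phys} (the H\"older split being over a single cube $Q_R$, producing a single factor of $C$), passes to a weak-$L^{p^*/2}$ bound on superlevel sets via Chebyshev, and integrates by the layer-cake formula to obtain $R^{-d}\sum_{B_R}|G_T-\bar G_T|\lesssim C\,R^{2-d}$. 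A single Caccioppoli step with a cutoff designed so that $|R\nabla\eta|\lesssim\eta^\theta$ (Lemma~\ref{L:P1:2b}) then converts this $L^1$ bound into the $\ell^p$ bound on $\nabla G_T$, contributing the remaining factor $C^{\alpha/2}$ with $\alpha=2\tfrac{p^*-1}{p^*-2}$. Thus $\beta$ arises from exactly two applications of the weighted-coercivity trade, not from a geometric series over many scales; this is precisely the point where the degenerate argument departs from the uniformly elliptic one in \cite{GO1}.
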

The precise form of the constant $C$ in \eqref{eq:CP1} is not crucial. In fact, in the random setting, when $\Omega$ is equipped with a probability
measure satisfying (A1) and (A2), we may view  $C$ as a random variable with controlled finite moments:
\begin{rem}
  \label{lem:3}
  Let $\expec{\cdot}$ satisfy Assumption (A1). Then the spatial
  average introduced in \eqref{eq:C} satisfies
  \begin{eqnarray*}
    \expec{ C^{q}(\aa, Q_{R}(x_0),q')}=\expec{\left(\frac{1}{|Q_{R}(x_0)|}
        \sum_{\bb\in Q_{R}(x_0)}\omega^{q'}(\aa,\bb)\right)^{\frac{q}{q'}}} \leq
    \begin{cases}
      \expec{\omega^{q'}}^{\frac{q}{q'}}&\text{if } q'\geq q,\\
      \expec{\omega^q}&\text{if  }q'<q,
    \end{cases}
  \end{eqnarray*}
  as can be seen by appealing to Jensen's inequality and stationarity. Moreover, if $\expec{\cdot}$ additionally fulfills (A2), then $C$ defined in \eqref{eq:CP1} satisfies
  \begin{equation*}
    \forall m\in\N\,:\,\expec{C^m}^{\frac{1}{m}}\lesssim 1,
  \end{equation*}
  where $\lesssim$ means $\leq$ up to a  constant that
  only depends on $m$, $p$, $\Lambda$ and $d$.
\end{rem}
\medskip

The proof of Proposition~\ref{P1} relies on arguments from  elliptic regularity theory, which in the uniformly elliptic case are standard.  
They typically involve the pointwise inequality
\begin{equation}\label{eq:40}
  \lambda_0|\nabla u(\bb)|^2\leq\nabla u(\bb)\,\aa(\bb)\nabla u(\bb),\qquad(\bb\in\B^d),
\end{equation}
where $\lambda_0>0$ denotes the constant of ellipticity. In the
degenerate case, the conductances $\aa$ may vanish on a non-negligible
set of bonds and \eqref{eq:40} breaks down. As a replacement we establish estimates
which provide a weighted, integrated version of \eqref{eq:40}:

\begin{lemma}
  \label{lem:coercivity-phys}
  Let  $p>d+1$. For any function $u:\Z^d\to\R$ and all $\aa\in\Omega$ we have (with the convention $\frac{1}{\infty}=0$)
  \begin{equation}\label{eq:coercivity}
    \sum_{\bb\in\B^d}|\nabla
    u(\bb)|^2\dist_{\aa}^{-p}(x_{\bb},y_{\bb})\leq C(p,d)\sum_{\bb\in\B^d}\aa(\bb)|\nabla
    u(\bb)|^2,
  \end{equation}
  where $C(p,d):=\sum_{x\in\Z^d}(|x|+1)^{1-p}$ and the inequality holds whenever the sums converge.
\end{lemma}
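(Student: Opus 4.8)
The plan is to prove the weighted coercivity estimate \eqref{eq:coercivity} by comparing, for each bond $\bb$, the gradient $\nabla u(\bb)$ along that bond with the gradient along a \emph{path} realizing (or nearly realizing) the chemical distance $\dist_{\aa}(x_{\bb},y_{\bb})$, and then summing. The key observation is that if $\pi$ is a path from $x_{\bb}$ to $y_{\bb}$, then by telescoping $u(y_{\bb})-u(x_{\bb})=\sum_{\bb'\in\pi}\pm\nabla u(\bb')$, so by Cauchy--Schwarz in the weighted form
\begin{equation*}
  |\nabla u(\bb)|^2=\Big|\sum_{\bb'\in\pi}\pm\nabla u(\bb')\Big|^2\leq\Big(\sum_{\bb'\in\pi}\aa(\bb')^{-1}\Big)\Big(\sum_{\bb'\in\pi}\aa(\bb')|\nabla u(\bb')|^2\Big).
\end{equation*}
Taking the infimum over paths $\pi$ (using that $\frac1{\infty}=0$ kills any path using a closed bond, so only paths of finite intrinsic length contribute), the first factor is $\dist_{\aa}(x_{\bb},y_{\bb})$, hence
\begin{equation*}
  |\nabla u(\bb)|^2\,\dist_{\aa}^{-1}(x_{\bb},y_{\bb})\leq\sum_{\bb'\in\pi_{\bb}}\aa(\bb')|\nabla u(\bb')|^2
\end{equation*}
for a near-optimal path $\pi_{\bb}$. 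This is exactly a single-bond weighted version of \eqref{eq:40}; the exponent is only $1$ at this stage, and the remaining task is bookkeeping to upgrade to the exponent $p>d+1$ and to control the multiplicity with which any fixed bond $\bb'$ is counted when we sum over all $\bb$.

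Next I would sum the displayed inequality over $\bb\in\B^d$ and interchange the order of summation. The right-hand side becomes $\sum_{\bb'\in\B^d}\aa(\bb')|\nabla u(\bb')|^2\cdot N(\bb')$, where $N(\bb')$ counts how many of the chosen paths $\pi_{\bb}$ pass through $\bb'$, weighted appropriately. The point of taking the exponent $p$ rather than $1$ is precisely to make this multiplicity summable: with the weight $\dist_{\aa}^{-p}$ on the left, one can arrange (choosing for $\pi_{\bb}$ a path whose length is comparable to $\dist_{\aa}(x_{\bb},y_{\bb})$ and which stays within a ball of comparable radius around $x_{\bb}$) that a fixed bond $\bb'$ at chemical distance $\sim r$ from $\bb$ contributes like $r^{-p}$ times the number of bonds $\bb$ at that distance, which is controlled by the number of lattice points, i.e.\ $\lesssim r^{d}$ by $|B_r(0)|\lesssim r^d$; summing $\sum_r r^{d-1}\cdot r^{-p}$ (one extra power from the range of $r$, or more carefully $\sum_{x\in\Z^d}(|x|+1)^{1-p}$) converges precisely because $p>d+1$, yielding the stated constant $C(p,d)=\sum_{x\in\Z^d}(|x|+1)^{1-p}$. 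The distinction between $\dist_{\aa}$-balls and Euclidean balls is immaterial here since $\aa\le 1$ forces $\dist_{\aa}(x,y)\ge|x-y|_1\ge$ the graph distance, so a chemical ball of radius $r$ is contained in a Euclidean ball of radius $r$, which is all we need for the volume bound.

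The main obstacle I anticipate is the careful choice of the comparison paths $\pi_{\bb}$ and the corresponding combinatorial estimate on the multiplicity $N(\bb')$: one must simultaneously ensure that $\pi_{\bb}$ nearly realizes $\dist_{\aa}(x_{\bb},y_{\bb})$, that its \emph{Euclidean} diameter (not just its intrinsic length) is controlled so that $\bb'$ being hit by $\pi_{\bb}$ forces $\bb$ to lie in a ball around $\bb'$ of radius comparable to the weight, and that the resulting double sum factorizes cleanly into (weighted energy of $u$) times (a convergent geometric-type series in the radius). A clean way to sidestep delicate path-selection is to bound, for each pair $(\bb,\bb')$, the ``cost'' of routing through $\bb'$ by $\aa(\bb')^{-1}$ and absorb it into $\dist_{\aa}$; then the left-hand weight $\dist_{\aa}^{-p}(x_{\bb},y_{\bb})$ splits off one factor of $\dist_{\aa}^{-1}$ (the Cauchy--Schwarz factor above) and leaves $\dist_{\aa}^{-(p-1)}(x_{\bb},y_{\bb})$, whose sum over all $\bb$ containing a fixed $\bb'$ in their near-optimal path is dominated by $\sum_{\bb\ni}\dist(x_{\bb},x_{\bb'})^{-(p-1)}\lesssim\sum_{x\in\Z^d}(|x|+1)^{-(p-1)}\cdot O(1)$, finite since $p-1>d$. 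Everything else is Fubini and elementary lattice geometry; no probabilistic input is needed, the estimate being pointwise in $\aa$, so the proof is short and self-contained as advertised.
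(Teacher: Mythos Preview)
Your proposal is correct and follows essentially the same route as the paper: Cauchy--Schwarz along a shortest path $\pi_{\bb}$ yields $\dist_{\aa}^{-p}(x_{\bb},y_{\bb})|\nabla u(\bb)|^2\leq\dist_{\aa}^{1-p}(x_{\bb},y_{\bb})\sum_{\bb'\in\pi_{\bb}}\aa(\bb')|\nabla u(\bb')|^2$, and after swapping sums the multiplicity is controlled via $\dist_{\aa}(x_{\bb},y_{\bb})\geq |x_{\bb}-x_{\bb'}|+1$ for $\bb'\in\pi_{\bb}$ (which is exactly your observation that $\aa\leq 1$ makes chemical distance dominate Euclidean distance). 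Your hedging about ``near-optimal'' paths is unnecessary: since $\aa\leq 1$ forces each bond to contribute at least $1$ to the intrinsic length, only finitely many paths are candidates and the infimum is attained.
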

While Lemma~\ref{lem:coercivity-phys} is purely deterministic, we also need the following statistically averaged version:
\begin{lemma}
  \label{lem:coercivity-prob}
  Let $\expec{\cdot}$ be stationary, cf. (A1), and $p>d+1$. Then for any
  stationary random field $u$ and any bond $\bb\in\B^d$ we have (with the convention $\frac{1}{\infty}=0$)
  \begin{equation*}
    \expec{|\nabla u(\bb)|^2\dist_{\aa}^{-p}(x_{\bb},y_{\bb})}\leq
    C(p,d) \sum_{\bb'=\{0,e_i\}\atop i=1\ldots
      d}\expec{\aa(\bb')|\nabla u(\bb')|^2},
  \end{equation*}
  where $C(p,d):=\sum_{k=0}^\infty2^{k(1-p)}|B_{2^{k+1}}(0)|<\infty$.
\end{lemma}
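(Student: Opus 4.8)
The plan is to establish a \emph{pointwise-in-$\aa$} localization: bound $|\nabla u(\bb)|^2\dist_\aa^{-p}(x_\bb,y_\bb)$ by the Dirichlet energy $\sum\aa|\nabla u|^2$ on a box whose size is comparable to $\dist_\aa(x_\bb,y_\bb)$, and only then take the expectation and use stationarity. Every quantity in play is nonnegative, so all exchanges of sums and expectations are legitimate by Tonelli's theorem, and when $\dist_\aa(x_\bb,y_\bb)=+\infty$ the left-hand side vanishes by the convention $\tfrac1\infty=0$; thus we may assume $\dist_\aa(x_\bb,y_\bb)<\infty$ throughout. First I would use stationarity of $\expec{\cdot}$ and of $u$, together with $\dist_{\aa(\cdot+z)}(x,y)=\dist_\aa(x+z,y+z)$, to reduce to $\bb=\{0,e_i\}$, i.e.\ to proving $\expec{|\nabla u(\{0,e_i\})|^2\dist_\aa^{-p}(0,e_i)}\le C(p,d)\sum_{j=1}^d\expec{\aa(\{0,e_j\})|\nabla u(\{0,e_j\})|^2}$; once this is shown for one representative of each direction, the bound for an arbitrary bond follows by the same shift.

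Fix $\aa$ with $\dist_\aa(0,e_i)<\infty$ and pick a geodesic $\pi=\pi(\aa)$ from $0$ to $e_i$ realizing $\dist_\aa(0,e_i)=\sum_{\bb'\in\pi}\aa(\bb')^{-1}$; such a $\pi$ exists — and can be selected measurably in $\aa$, e.g.\ as the lexicographically smallest one — because $\aa\le1$ forces any finite-length path to use only finitely many edges. Writing $u(e_i)-u(0)$ as the telescoping sum of $\pm\nabla u(\bb')$ along $\pi$, noting that $\aa(\bb')>0$ for every $\bb'\in\pi$, and applying the Cauchy--Schwarz inequality with the weights $\aa(\bb')^{\pm1/2}$ gives
\[
  |\nabla u(\{0,e_i\})|^2\ \le\ \Big(\sum_{\bb'\in\pi}\aa(\bb')^{-1}\Big)\Big(\sum_{\bb'\in\pi}\aa(\bb')\,|\nabla u(\bb')|^2\Big)\ =\ \dist_\aa(0,e_i)\sum_{\bb'\in\pi}\aa(\bb')\,|\nabla u(\bb')|^2 ,
\]
and hence $|\nabla u(\{0,e_i\})|^2\dist_\aa^{-p}(0,e_i)\le \dist_\aa^{1-p}(0,e_i)\sum_{\bb'\in\pi}\aa(\bb')\,|\nabla u(\bb')|^2$.

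The geometric heart of the argument is that each edge contributes at least $1$ to the chemical length, so $\pi$ consists of at most $\dist_\aa(0,e_i)$ edges and therefore stays in $B_{\dist_\aa(0,e_i)}(0)$; in particular its bonds belong to $Q_{\dist_\aa(0,e_i)}(0)$. Picking $k\in\N_0$ with $2^k\le\dist_\aa(0,e_i)<2^{k+1}$ and using $1-p<0$ to get $\dist_\aa^{1-p}(0,e_i)\le2^{k(1-p)}$, I arrive at
\[
  |\nabla u(\{0,e_i\})|^2\dist_\aa^{-p}(0,e_i)\ \le\ 2^{k(1-p)}\!\!\sum_{\bb'\in Q_{2^{k+1}}(0)}\!\!\aa(\bb')\,|\nabla u(\bb')|^2\ \le\ \sum_{l=0}^\infty 2^{l(1-p)}\!\!\sum_{\bb'\in Q_{2^{l+1}}(0)}\!\!\aa(\bb')\,|\nabla u(\bb')|^2 ,
\]
which holds pointwise in $\aa$ (trivially so when $\dist_\aa(0,e_i)=\infty$). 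Taking $\expec{\cdot}$ and using stationarity — so that $\expec{\aa(\bb')|\nabla u(\bb')|^2}$ depends only on the coordinate direction of $\bb'$, of which $Q_{2^{l+1}}(0)$ contains exactly $|B_{2^{l+1}}(0)|$ bonds — produces the asserted inequality with $C(p,d)=\sum_{l=0}^\infty2^{l(1-p)}|B_{2^{l+1}}(0)|$, a series that converges because $|B_{2^{l+1}}(0)|\lesssim2^{ld}$ and $p>d+1$.

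The argument is short; what needs care rather than ingenuity is the measurable selection of the geodesic together with the (trivial) treatment of disconnected configurations, and above all the localization step, which is where the standing bound $\aa\le1$ enters: it is what converts ``small chemical length'' into ``small Euclidean extent'' and lets us confine $\pi$ to a dyadic box. One could alternatively deduce the statement from the deterministic Lemma~\ref{lem:coercivity-phys} applied to a spatial truncation of $u$ and then average, but this introduces cutoff error terms and an extra moment assumption on $u$; the direct argument above, which is simply the statistically averaged and dyadically organized twin of the proof of Lemma~\ref{lem:coercivity-phys}, avoids both.
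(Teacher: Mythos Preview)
Your proof is correct and follows essentially the same route as the paper's: the pointwise Cauchy--Schwarz estimate along a geodesic, the localization of the geodesic to a box via $\aa\le 1$, a dyadic decomposition according to the scale of $\dist_\aa$, and then expectation plus stationarity. The only cosmetic differences are that the paper phrases the dyadic decomposition via indicator functions $\chi_{2^k}$ rather than selecting the unique $k$ and then summing, and that it keeps $\bb$ general (using boxes $Q_{2L}(x_\bb)$) and invokes stationarity at the end, whereas you shift to $\bb=\{0,e_i\}$ first; also, your remark on measurable selection of the geodesic is harmless but unnecessary, since the geodesic is eliminated by the pointwise box bound before any expectation is taken.
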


A last ingredient required for the proof of Theorem~\ref{T1} is a {\em
  Caccioppoli inequality in  probability} that yields a gain of
stochastic integrability and helps to treat the $\nabla\phi_T$-term on
the right-hand side in \eqref{eq:43}. In the uniformly elliptic case, i.~e.~when $0<\lambda_0\leq\aa\leq1$, the Caccioppoli inequality
\begin{equation}\label{eq:30}
  \expec{|\nabla\phi_T|^{2p+2}}^{\frac{1}{2p+2}}\lesssim\expec{\phi_T^{2p}}^{\frac{1}{2p}\frac{p}{p+1}}
\end{equation}
holds for any integer exponents $p$ (see  \cite[Lemma~2.7]{GO1}). The inequality follows from combining
the elementary discrete
inequality
\begin{equation}\label{eq:discrete}
  |\nabla u(\bb)|=|u(y_{\bb})-u(x_{\bb})|\leq|u(y_{\bb})|+|u(x_{\bb})|,
\end{equation}
with the estimate
\begin{equation}\label{eq:29}
  \expec{\phi_T^{2p}|\nabla\phi_T|^2}\lesssim\frac{1}{\lambda_0}\expec{\phi^{2p}_T|\nabla\phi_T|}.
\end{equation}
The latter is obtained by testing the modified corrector equation
\eqref{eq:cor-modified} with
$\phi_T^{2p+1}$ and uses the uniform ellipticity of $\aa$. In the
degenerate elliptic case \eqref{eq:29} is not true any longer.  However, by appealing to Lemma
\ref{lem:coercivity-prob} the following weaker version of \eqref{eq:30} survives:
\begin{equation}\label{eq:31}
  \expec{|\nabla\phi_T|^{(2p+2)\theta}}^{\frac{1}{(2p+2)\theta}}\lesssim\expec{\phi^{2p}_T}^{\frac{1}{2p}\frac{p}{p+1}}
\end{equation}
for any factor $0<\theta<1$. Hence, we only gain an increase of
integrability by exponents strictly smaller two. As a matter of fact, in the proof of our main result we only need the estimate in
the following form:
\begin{lemma}[Caccioppoli estimate in probability]
  \label{L:CEP}
  Let $\expec{\cdot}$ satisfy (A1) and (A2). 
  Let $\phi_T$ denote the corrector associated with $e \in\R^d$,
  $|e |=1$, $T>0$.
  For every even integer $p$ we have
  \begin{equation}
    \label{eq:cacciopprob}
    \expec{|\nabla\phi_T|^{2p+1}}^{\frac{1}{2p+1}}\lesssim\expec{\phi^{2p}_T}^{\frac{1}{2p}\frac{p}{p+1}},
  \end{equation}
  where $\lesssim$ means $\leq$ up to a  constant that
  only depends on $p$, $\Lambda$ and $d$.
\end{lemma}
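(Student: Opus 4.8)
The plan is to follow the same route as in the uniformly elliptic case, namely to test the modified corrector equation with a suitable power of $\phi_T$, but to replace the coercivity inequality \eqref{eq:40}/\eqref{eq:29} by the averaged, weighted coercivity of Lemma~\ref{lem:coercivity-prob}. First I would fix an even integer $p$ and test \eqref{eq:cor-modified} against the stationary random field $\phi_T^{2p+1}$. Using the stationarity of $\phi_T$ and discrete integration by parts in the probability space (i.e. $\expec{\nabla^*F\,g}=\sum_{i}\expec{F(\{0,e_i\})\,\nabla g(\{0,e_i\})}$ applied to shifts, exactly as in the derivation of \eqref{eq:29}), the zeroth order term gives $\tfrac{1}{T}\expec{\phi_T^{2p+2}}\geq 0$ and can be dropped, while the elliptic term produces $\sum_{i}\expec{\aa(\{0,e_i\})(\nabla\phi_T(\{0,e_i\})+e(\{0,e_i\}))\,\nabla(\phi_T^{2p+1})(\{0,e_i\})}$. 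Expanding $\nabla(\phi_T^{2p+1})$ and using the discrete chain-rule-type bound $|\nabla(u^{2p+1})(\bb)|\lesssim (|u(x_\bb)|^{2p}+|u(y_\bb)|^{2p})|\nabla u(\bb)|$ together with $|e(\bb)|\le 1$, one arrives after absorbing and Young's inequality at an estimate of the schematic form
\begin{equation*}
  \sum_{i}\expec{\aa(\{0,e_i\})\,\phi_T^{2p}(0)\,|\nabla\phi_T(\{0,e_i\})|^2}\lesssim \sum_{i}\expec{\aa(\{0,e_i\})\,\phi_T^{2p}(0)\,|\nabla\phi_T(\{0,e_i\})|}+\text{(lower order)},
\end{equation*}
i.e. the analogue of \eqref{eq:29} but \emph{without} the factor $\tfrac{1}{\lambda_0}$ — instead the conductance weight $\aa$ is still attached on both sides. (Strictly, because $\phi_T^{2p}(0)$ is not a function of the single bond, one keeps the weight as a stationary random coefficient and uses stationarity to put it at the origin.)

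The next step is to convert the weighted quantity $\expec{\aa(\bb)\phi_T^{2p}|\nabla\phi_T(\bb)|^2}$ on the left into an \emph{unweighted} gradient moment, which is where Lemma~\ref{lem:coercivity-prob} enters. Applied to the stationary random field $x\mapsto \phi_T^{p}(x)\,(\text{test against }\nabla\phi_T)$ — more precisely, to a stationary field built from $\phi_T$ so that the left-hand side of Lemma~\ref{lem:coercivity-prob} reproduces $\expec{|\nabla\phi_T(\bb)|^2\dist_\aa^{-p'}(x_\bb,y_\bb)\,\phi_T^{2p}}$ for suitable $p'>d+1$ — it bounds such a weighted gradient term by $\sum_i\expec{\aa(\{0,e_i\})\phi_T^{2p}|\nabla\phi_T(\{0,e_i\})|^2}$, i.e. by the left-hand side of the inequality derived in the previous step. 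Combining, the $\dist_\aa^{-p'}$-weighted second moment of $\phi_T^{p}\nabla\phi_T$ is controlled by the first moment $\sum_i\expec{\aa\,\phi_T^{2p}|\nabla\phi_T|}$. One then undoes the weight: since $\aa(\bb)\le 1$ we trivially have $\aa(\bb)\le \dist_\aa^{-1}(x_\bb,y_\bb)$, so $\aa(\bb)\,|\nabla\phi_T(\bb)|\le \dist_\aa^{-\alpha}|\nabla\phi_T(\bb)|^{?}$-type interpolation lets one relate the unweighted gradient norm $\expec{|\nabla\phi_T|^{(2p+2)\theta}}$, for $\theta<1$ close to $1$, to the $\dist_\aa^{-p'}$-weighted second moment via Hölder, paying a power of $\expec{\dist_\aa^{p'\cdot(\text{conjugate})}}$, which is finite and bounded by $\Lambda$ thanks to (A2). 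This yields \eqref{eq:31}; choosing $\theta$ so that $(2p+2)\theta\ge 2p+1$ and using Jensen upgrades it to the stated \eqref{eq:cacciopprob}, with the constant depending only on $p$, $\Lambda$, $d$.

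The main obstacle I anticipate is the bookkeeping in the interpolation/Hölder step: one must choose the exponent $p'>d+1$ in Lemma~\ref{lem:coercivity-prob}, the factor $\theta<1$, and the Hölder split so that (i) the weight $\dist_\aa^{-p'}$ produced by the coercivity lemma is exactly compensated by a moment $\expec{\dist_\aa^{q}}$ that (A2) controls, and (ii) the resulting gradient exponent is at least $2p+1$ while the $\phi_T$-exponent on the right stays $2p$ with the precise power $\tfrac{1}{2p}\tfrac{p}{p+1}$. Getting the power of $\phi_T$ right forces one to apply Young's inequality with the conjugate pair $(p+1,\tfrac{p+1}{p})$ when passing from the quadratic-in-$\nabla\phi_T$ term to the linear one, exactly as in \eqref{eq:29}--\eqref{eq:30}; the only genuinely new point is that the gain in gradient integrability is capped below $2$ because the conductance weight can only be traded for an inverse-chemical-distance weight, never for a uniform constant. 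A secondary technical care is that $\phi_T^{2p}(0)$ appearing as a coefficient is not measurable with respect to a single bond, so the application of Lemma~\ref{lem:coercivity-prob} should be phrased for the stationary field $u=\phi_T^{p+?}$ (or $u$ a primitive whose gradient carries the factor), after which stationarity moves the base point; I would handle this by writing the chain of inequalities directly in terms of the stationary fields rather than pointwise in $\bb$.
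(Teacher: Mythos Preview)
Your plan is essentially the paper's own argument, and all the ingredients you list (testing \eqref{eq:cor-modified} with $\phi_T^{2p+1}$, discrete Leibniz-type bounds, Lemma~\ref{lem:coercivity-prob}, H\"older against moments of $\dist_{\aa}$ supplied by (A2), Young's inequality with absorption) are exactly what is used. The one place where your exposition is muddled is the choice of pivot quantity: instead of the ``schematic'' estimate for $\sum_i\expec{\aa\,\phi_T^{2p}(0)\,|\nabla\phi_T|^2}$ with the awkward external coefficient $\phi_T^{2p}(0)$, the paper works directly with
\[
  \mathcal E\;:=\;\sum_{\bb'=\{0,e_i\}}\expec{\aa(\bb')\,|\nabla(\phi_T^{\,p+1})(\bb')|^2},
\]
which is a bona fide $\aa$-weighted Dirichlet form of the stationary field $u=\phi_T^{\,p+1}$. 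This immediately resolves your ``secondary technical care'': Lemma~\ref{lem:coercivity-prob} applies verbatim to $u=\phi_T^{\,p+1}$ and yields $\expec{\omega^{-1}|\nabla(\phi_T^{\,p+1})|^2}\lesssim\mathcal E$, after which the discrete Leibniz comparison $|\nabla\phi_T|^{2p+2}\lesssim\big(\tfrac{\phi_T^p(x_\bb)+\phi_T^p(y_\bb)}{2}\big)^2|\nabla\phi_T|^2\lesssim|\nabla(\phi_T^{\,p+1})|^2$ and a single H\"older step give $\expec{|\nabla\phi_T|^{2p+1}}^{\frac{2p+2}{2p+1}}\lesssim\mathcal E$ --- this is your interpolation step with the specific choice $\theta=\tfrac{2p+1}{2p+2}$, and the side term is just $\expec{\omega^{2p+1}}$, finite by (A2). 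In the other direction, testing with $\phi_T^{2p+1}$ and the Leibniz bound $|\nabla(\phi_T^{\,p+1})|^2\lesssim\nabla\phi_T\,\nabla(\phi_T^{2p+1})$ give $\mathcal E\lesssim\sum_i\expec{\aa\,|\nabla(\phi_T^{2p+1})|}$, and then Young plus absorption (with the reverse Leibniz comparison) yield $\mathcal E\lesssim\expec{\phi_T^{2p}}$. Combining the two bounds on $\mathcal E$ gives \eqref{eq:cacciopprob} with the exact exponent $\tfrac{1}{2p}\tfrac{p}{p+1}$, with no need for the observation $\aa(\bb)\le\dist_{\aa}^{-1}$ or any further interpolation.
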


Now we are ready to prove our main result:
\begin{proof}[Proof of Theorem~\ref{T1}]
  It suffices to consider exponents $p\in 2\N$ that are larger than a
  threshold only depending on $d$ -- the threshold is determined by
  \eqref{eq:23} below.

  Further, we only need to prove 
  \begin{equation}\label{eq:6}
    \expec{\phi_T^{2p}}^{\frac{1}{2p}}\lesssim
    \max_{\bb'=\{0,e_i\}\atop i=1,\ldots,d}\expec{|\nabla\phi_T(\bb')|^{2p+1}}^{\frac{1}{2p+1}} + 1.
  \end{equation}
  Indeed, in combination with the Caccioppoli estimate in
  probability, cf. Lemma~\ref{L:CEP}, estimate \eqref{eq:6}  yields $\expec{\phi_T^{2p}}^{\frac{1}{2p}}\lesssim\expec{\phi_T^{2p}}^{\frac{1}{2p}\frac{p}{p+1}}
  + 1$. Since $\frac{p}{p+1}<1$ the first term can be absorbed and the
  desired estimate follows.

  We prove \eqref{eq:6}. For reasons that will become clear at the end of
  the argument we fix an exponent $\frac{2d}{d+2}<q<2$ such that
   \begin{equation}\label{eq:23}
     d(\frac{1}{q}+\frac{1}{2p}-1)+1<0.
   \end{equation}
   This is always possible for $p\gg 1$ and $0<2-q\ll 1$, since
   \begin{equation*}
     \lim_{q\uparrow 2,p\uparrow\infty}
     d(\frac{1}{q}+\frac{1}{2p}-1)=-\frac{d}{2}<-1\qquad\text{for }d>2.
   \end{equation*}
   Our argument for \eqref{eq:6} starts with the $p$-version of the  spectral gap
   estimate, see Lemma~\ref{lem:SGp}, that we combine with
   Lemma~\ref{lem:2}:
   \begin{eqnarray*}
     \expec{\phi_T^{2p}}^{\frac{1}{p}}&=&\expec{\phi_T^{2p}(x=0)}^{\frac{1}{p}}\lesssim\expec{\left(\sum_{\bb\in\B^d}\left(\frac{\partial\phi_T(x=0)}{\partial\bb}\right)^2\right)^p}^{\frac{1}{p}}\\
     &\lesssim&\expec{\left(\sum_{\bb\in\B^d}(\nabla G_T(\bb,0))^2(\nabla\phi_T(\bb)+e (\bb))^2\omega_0^{4}(\bb)\right)^p}^{\frac{1}{p}}.
   \end{eqnarray*}
   Now we wish to benefit from the decay estimate for $\nabla G_T$ in Proposition~\ref{P1},
   and therefore decompose $\B^d$ into dyadic annuli: Let the dyadic annuli $A_k$, $k\in\N_0$ be defined as in
   Proposition~\ref{P1} with initial radius $R_0=2$. Note that $\B^d$ can be written as the
   disjoint union of $A_0,A_1,A_2,\ldots$ .
   With the triangle inequality w.~r.~t. $\expec{(\cdot)^p}^{\frac{1}{p}}$ and
   H\"older's inequality in $\bb$-space with exponents
   $(\frac{p}{p-1},p)$ we get
   \begin{align}\label{eq:25}
     \begin{split}
       \expec{\phi_T^{2p}}^{\frac{1}{p}}
       &\lesssim\ \sum_{k\in\N_0}\expec{\left(\sum_{\bb\in A_k}(\nabla
           G_T(\bb,0))^2(\nabla\phi_T(\bb)+e (\bb))^2\omega_0^{4}(\bb)\right)^p}^{\frac{1}{p}}\\
       &\lesssim\  \sum_{k\in\N_0}\expec{ \left(\sum_{\bb\in A_k}|\nabla
           G_T(\bb,0)|^{\frac{2p}{p-1}}\right)^{p-1}\left(\sum_{\bb\in A_k}
           (\nabla\phi_T(\bb)+e (\bb))^{2p}\omega_0^{4p}(\bb)\right)}^{\frac{1}{p}}.
     \end{split}
   \end{align}
   Because $\frac{2d}{d+2}<q<2<\frac{2p}{p-1}$, the discrete
   $\ell^q$-$\ell^{\frac{2p}{p-1}}$-estimate combined with the decay
   estimate of Proposition~\ref{P1} yields
   \begin{eqnarray}\label{eq:24}
     \left(\sum_{\bb\in A_k}|\nabla
       G_T(\bb,0)|^{\frac{2p}{p-1}}\right)^{p-1}
     &\leq&
     \left(\sum_{\bb\in A_k}|\nabla
       G_T(\bb,0)|^{q}\right)^{\frac{2p}{q}}\leq C 2^{k(2p(1-(1-\frac{1}{q})d))}.
   \end{eqnarray}
   Here and below, $C$ denotes a generic, non-negative
   random variable with the property that $\expec{C^m}\lesssim 1$ for
   all $m<\infty$, where $\lesssim$ means $\leq$ up to a  constant that
   only depends on $m$, $p$, $q$, $\Lambda$ and $d$.  Combining \eqref{eq:25} and
   \eqref{eq:24} yields
   \begin{equation}\label{eq:26}
     \expec{\phi_T^{2p}}^{\frac{1}{p}}
     \lesssim
     \sum_{k\in\N_0}2^{2k(1-(1-\frac{1}{q})d)}\left(
       \sum_{\bb\in A_k}
       \expec{C\ (\nabla\phi_T(\bb)+e (\bb))^{2p}\ \omega_0^{4p}(\bb)} \right)^{\frac{1}{p}}.
   \end{equation}
   Next we apply a triple H\"older inequality in probability with
   exponents $(\theta,\theta',\theta')$, where we choose
   $\theta=\frac{2p+1}{2p}$ (so that $2p\theta=2p+1$). We have
   \begin{eqnarray*}
     \expec{C\ (\nabla\phi_T(\bb)+e (\bb))^{2p}\ \omega_0^{4p}(\bb)}
     \leq
     \expec{(\nabla\phi_T(\bb)+e (\bb))^{2p+1}}^{\frac{2p}{2p+1}}   
     \expec{C^{\theta'}}^{\frac{1}{\theta'}}
     \expec{\omega_0^{4p\theta'}(\bb)}^{\frac{1}{\theta'}}.
   \end{eqnarray*}
   The first term is estimated by stationarity of $\nabla\phi_T$ and the assumption $|e |=1$ as
   \begin{equation*}
     \expec{(\nabla\phi_T(\bb)+e (\bb))^{2p+1}}^{\frac{2p}{2p+1}}\lesssim
     \max_{\bb'=\{0,e_i\}\atop
       i=1,\ldots,d}\expec{|\nabla\phi_T(\bb')|^{2p+1}}^{\frac{2p}{2p+1}}
     + 1.
   \end{equation*}
   For the second term we have $\expec{C^{\theta'}}^{\frac{1}{\theta'}}\expec{\omega_0^{4p\theta'}(\bb)}^{\frac{1}{\theta'}}\lesssim
   1$ due to (A2+), so that we obtain
   \begin{equation}\label{eq:27}
      \expec{C\ (\nabla\phi_T(\bb)+e (\bb))^{2p}\ \omega_0^{4p}(\bb)}
     \lesssim \max_{\bb'=\{0,e_i\}\atop
       i=1,\ldots,d}\expec{|\nabla\phi_T(\bb')|^{2p+1}}^{\frac{2p}{2p+1}}
     + 1.
   \end{equation}
   Combined with \eqref{eq:26} we get
   \begin{eqnarray*}
     \expec{\phi_T^{2p}}^{\frac{1}{p}}
     &\lesssim& \left(\max_{\bb'=\{0,e_i\}\atop
       i=1,\ldots,d}\expec{|\nabla\phi_T(\bb')|^{2p+1}}^{\frac{2}{2p+1}}+1\right)\
     \times\ 
     \sum_{k\in\N_0}2^{2k(1-(1-\frac{1}{q})d)}|A_k|^{\frac{1}{p}}\\
     &\lesssim& \left(\max_{\bb'=\{0,e_i\}\atop
       i=1,\ldots,d}\expec{|\nabla\phi_T(\bb')|^{2p+1}}^{\frac{2}{2p+1}}+1\right).
   \end{eqnarray*}
   In the last line we used that
   \begin{equation*}
     \sum_{k\in\N_0}2^{2k(1-(1-\frac{1}{q})d)}|A_k|^{\frac{1}{p}}\lesssim\sum_{k\in\N_0}2^{2k(1-(1-\frac{1}{2p}-\frac{1}{q})d)}\lesssim 1,
   \end{equation*}
   which holds since the exponent is negative, cf. \eqref{eq:23}.
   This proves \eqref{eq:6}.
\end{proof}

\section{Proofs of the auxiliary lemmas}\label{S:proofs}
\subsection{Proof of Lemma~\ref{lem:2}}
The argument for Lemma~\ref{lem:2} is split into three lemmas.

\begin{lemma}\label{L:ODE}
  Let $\bb\in\B^d$ be fixed.  For $T>0$ let $\phi_T$ and $G_T$ denote the modified corrector
  and the Green's function, respectively. Then 
  \begin{eqnarray}
    \label{eq:ODE:0}
    \frac{\partial\phi_T(x=0)}{\partial \aa(\bb)}&=&-\nabla G_T(\bb,0)(\nabla\phi_T(\bb)+e (\bb)),\\
    \label{eq:ODE:1}
    \frac{\partial}{\partial \aa(\bb)}\frac{\partial\phi_T(x=0)}{\partial \aa(\bb)}
    &=&-2\nabla\nabla G_T(\bb,\bb)\frac{\partial\phi_T(x=0)}{\partial \aa(\bb)},\\
    \label{eq:ODE:2}
    \frac{\partial}{\partial\aa(b)}\nabla\nabla G_T(\bb,\bb)
    &=&-\left(\nabla\nabla G_T(\bb,\bb)\right)^2.
  \end{eqnarray}
  Moreover, $\nabla\nabla G_T(\bb,\bb)$ and $1-\aa(\bb)\nabla\nabla
  G_T(\bb,\bb)$ are strictly positive.
\end{lemma}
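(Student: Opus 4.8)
\emph{Strategy.} I would prove Lemma~\ref{L:ODE} by resolvent calculus. Fix $\bb$, freeze all conductances but $\aa(\bb)$, and regard $\phi_T$ and $G_T(\aa,\cdot,y)$ as functions of the single real variable $s:=\aa(\bb)\in[0,1]$. Write $L_\aa:=\tfrac1T+\nabla^*\aa\nabla$: a bounded, self-adjoint operator on $\ell^2(\Z^d)$ with $L_\aa\geq\tfrac1T\,\mathrm{Id}$ uniformly in $s$, hence boundedly invertible on $\ell^2(\Z^d)$ with $\|L_\aa^{-1}\|\leq T$, and also on $\ell^\infty(\Z^d)$ with the same bound since $e^{-t\nabla^*\aa\nabla}$ is a Markov semigroup. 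Introduce $v_{\bb'}:=\nabla^*\delta_{\bb'}=\delta_{y_{\bb'}}-\delta_{x_{\bb'}}\in\ell^2(\Z^d)$, so that $\langle v_{\bb'},u\rangle=\nabla u(\bb')$, $\nabla^*\aa\nabla=\sum_{\bb'}\aa(\bb')\,v_{\bb'}\otimes v_{\bb'}$, $\nabla^*(\aa e)=\sum_{\bb'}\aa(\bb')e(\bb')\,v_{\bb'}$, $\nabla\nabla G_T(\bb,\bb)=\langle v_\bb,L_\aa^{-1}v_\bb\rangle$, $\partial_s L_\aa=v_\bb\otimes v_\bb$ and $\partial_s\nabla^*(\aa e)=e(\bb)\,v_\bb$.

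\emph{Proof of \eqref{eq:ODE:0}.} Since $G_T(\aa,\cdot,y)=L_\aa^{-1}\delta_y$ and $s\mapsto L_\aa^{-1}$ is analytic by the resolvent identity, $G_T$ is analytic in $s$ with $\partial_s G_T(\cdot,y)\in\ell^2(\Z^d)$. For $\phi_T$, note $\phi_T(\aa,\cdot)=-L_\aa^{-1}\nabla^*(\aa e)\in\ell^\infty(\Z^d)$ for fixed $\aa$, and that comparing two values $s,s'$ of $\aa(\bb)$ the difference $w:=\phi_T^{(s)}-\phi_T^{(s')}$ solves $L_{\aa^{(s')}}w=-(s-s')(\nabla\phi_T^{(s)}(\bb)+e(\bb))\,v_\bb$ with finitely supported right-hand side; as bounded solutions of $L_{\aa^{(s')}}h=0$ vanish, $w=-(s-s')(\nabla\phi_T^{(s)}(\bb)+e(\bb))\,L_{\aa^{(s')}}^{-1}v_\bb\in\ell^2(\Z^d)$, and letting $s'\to s$ gives that $\phi_T$ is differentiable in $s$ with $\partial_s\phi_T=-(\nabla\phi_T(\bb)+e(\bb))\,L_\aa^{-1}v_\bb\in\ell^2(\Z^d)$. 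Pairing with $\delta_x$ and using self-adjointness together with $G_T(x,y)=G_T(y,x)$ gives the pointwise formula $\partial_s\phi_T(x)=-(\nabla\phi_T(\bb)+e(\bb))\,\nabla G_T(\bb,x)$ (gradient in the first slot); at $x=0$ this is \eqref{eq:ODE:0}, and I retain the general-$x$ version for \eqref{eq:ODE:1}.

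\emph{Proofs of \eqref{eq:ODE:2} and \eqref{eq:ODE:1}.} Differentiating $L_\aa G_T(\cdot,y)=\delta_y$ in $s$ the same way gives $\partial_s G_T(x,y)=-\nabla G_T(\bb,x)\,\nabla G_T(\bb,y)$, a product of a function of $x$ and a function of $y$; applying $\nabla$ in $x$ and in $y$, each evaluated at $\bb$, yields \eqref{eq:ODE:2}. For \eqref{eq:ODE:1} I would differentiate $\partial_s\phi_T(0)=-\nabla G_T(\bb,0)(\nabla\phi_T(\bb)+e(\bb))$ once more: by \eqref{eq:ODE:2}, $\partial_s\nabla G_T(\bb,0)=-\nabla\nabla G_T(\bb,\bb)\,\nabla G_T(\bb,0)$, and by the general-$x$ form of \eqref{eq:ODE:0}, $\partial_s\nabla\phi_T(\bb)=-\nabla\nabla G_T(\bb,\bb)(\nabla\phi_T(\bb)+e(\bb))$; adding the two contributions and using $\nabla G_T(\bb,0)(\nabla\phi_T(\bb)+e(\bb))=-\partial_s\phi_T(0)$ gives $\partial_s^2\phi_T(0)=2\,\nabla\nabla G_T(\bb,\bb)\,\nabla G_T(\bb,0)(\nabla\phi_T(\bb)+e(\bb))=-2\,\nabla\nabla G_T(\bb,\bb)\,\partial_s\phi_T(0)$.

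\emph{Positivity, and the main obstacle.} As $L_\aa^{-1}$ is positive definite on $\ell^2(\Z^d)$ and $v_\bb\neq0$, $\nabla\nabla G_T(\bb,\bb)=\langle v_\bb,L_\aa^{-1}v_\bb\rangle>0$. For $1-\aa(\bb)\nabla\nabla G_T(\bb,\bb)>0$, put $u:=L_\aa^{-1}v_\bb\neq0$ and use the splitting $L_\aa=L_{\aa^{\bb,0}}+\aa(\bb)\,v_\bb\otimes v_\bb$ with $L_{\aa^{\bb,0}}\geq\tfrac1T\,\mathrm{Id}$: then $0<\langle u,L_{\aa^{\bb,0}}u\rangle=\langle u,v_\bb\rangle-\aa(\bb)\langle v_\bb,u\rangle^2=\nabla\nabla G_T(\bb,\bb)\bigl(1-\aa(\bb)\nabla\nabla G_T(\bb,\bb)\bigr)$, so dividing by $\nabla\nabla G_T(\bb,\bb)>0$ concludes. (Alternatively, integrating the Riccati equation \eqref{eq:ODE:2} in $s$ gives $\nabla\nabla G_T(\bb,\bb)=f_0(1+\aa(\bb)f_0)^{-1}$ with $f_0>0$ the value of $\nabla\nabla G_T(\bb,\bb)$ at $\aa(\bb)=0$, whence $\aa(\bb)\nabla\nabla G_T(\bb,\bb)<1$.) The genuinely delicate point in all this is the differentiability claim of the second paragraph — that the $s$-derivative of the \emph{stationary} field $\phi_T$ is an honest \emph{decaying} ($\ell^2$) function, which is what makes the Green's function representations meaningful — and it hinges precisely on the uniform invertibility of $L_\aa$ afforded by the $\tfrac1T$-regularisation together with the perturbation $v_\bb\otimes v_\bb$ being finitely supported; the rest is bookkeeping of which slot of $G_T$ the discrete gradients act on.
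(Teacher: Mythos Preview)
Your proof is correct and follows essentially the same route as the paper: both differentiate the defining equations for $G_T$ and $\phi_T$ with respect to $\aa(\bb)$, obtain the identity $\partial_s G_T(x,y)=-\nabla G_T(\bb,x)\nabla G_T(\bb,y)$ and its $\phi_T$-analogue, and then take further derivatives; for positivity, your splitting $L_\aa=L_{\aa^{\bb,0}}+\aa(\bb)\,v_\bb\otimes v_\bb$ and computation of $\langle u,L_{\aa^{\bb,0}}u\rangle$ is exactly the paper's identity \eqref{eq:46} rewritten in operator language. Your resolvent framing is a bit cleaner, and you are more explicit than the paper about why the $s$-derivative of the stationary (non-$\ell^2$) field $\phi_T$ is an honest $\ell^2$ function---a point the paper treats formally---but the substance is the same.
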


\begin{proof}[Proof of Lemma~\ref{L:ODE}]
  For simplicity we write $\phi$ and $G$ instead of $\phi_T$ and
$G_T$.
\medskip

\step 1 Argument for \eqref{eq:ODE:2}.

We first claim that
\begin{subequations}
  \begin{align}
    \label{eq:rel3}
    &\frac{\partial}{\partial \aa(\bb)}G(x,y)=-\nabla G(\bb,y)\nabla G(\bb,x),\\
    \label{eq:rel4}
    &\frac{\partial}{\partial \aa(\bb)}\nabla G(x,\bb)=-\nabla\nabla G(\bb,\bb)\nabla G(\bb,x).
  \end{align}
\end{subequations}
Indeed, since $\nabla$ and $\frac{\partial}{\partial\aa(\bb)}$
commute, an application of $\frac{\partial}{\partial \aa(\bb)}$ to
\eqref{eq:D:G} yields
\begin{equation}\label{eq:ODE:step-1:1}
  \left(\frac{1}{T}+\nabla^*\aa\nabla\right)\frac{\partial G(\cdot,y)}{\partial\aa(\bb)}=-\nabla^*\frac{\partial \aa(\cdot)}{\partial \aa(\bb)}\nabla G(\cdot,y). 
\end{equation}
We test this identity with $G(\cdot,x)$:
\begin{eqnarray}
  \label{eq:ODE:step-1:2}
  \frac{\partial G(x,y)}{\partial
      \aa(\bb)}&=&\sum_{y'\in\Z^d}\frac{\partial G(y',y)}{\partial
      \aa(\bb)} \delta(x-y')\\\nonumber
  &\stackrel{\eqref{eq:D:G}}{=}&\sum_{y'\in\Z^d}\frac{\partial G(y',y)}{\partial
      \aa(\bb)} \,\left(\frac{1}{T}+\nabla^*\aa\nabla\right)G(y',x)\\\nonumber
  &\stackrel{\eqref{int-by-parts}}{=}&\sum_{y'\in\Z^d}G(y',x)\,\left(\frac{1}{T}+\nabla^*\aa\nabla\right)\frac{\partial
  G(y',y)}{\partial \aa(\bb)}\\\nonumber
  &\stackrel{\eqref{eq:ODE:step-1:1},\eqref{int-by-parts}}{=}&-\sum_{\bb'\in\B^d}\frac{\partial \aa(\bb')}{\partial
    \aa(\bb)}\nabla G(\bb',y)\,\nabla G(\bb',x).
\end{eqnarray}
Since $\frac{\partial \aa(\bb')}{\partial
    \aa(\bb)}$ is equal to $1$ if $\bb'=\bb$ and $0$ else, the sum on
  the right-hand side reduces to $\nabla G(\bb,y)\nabla G(\bb,x)$ and we get
  \eqref{eq:rel3}. An application of $\nabla$ to \eqref{eq:rel3} yields
  \eqref{eq:rel4}, and an application of $\nabla$ to \eqref{eq:rel4}
  finally yields \eqref{eq:ODE:2}.
\medskip

\step 2 Argument for \eqref{eq:ODE:0} and \eqref{eq:ODE:1}.

We apply $\frac{\partial}{\partial\aa(\bb)}$ to the
  modified corrector equation \eqref{eq:cor-modified}:
  \begin{equation}\label{eq:ODE:step1:1}
    \frac{1}{T}\frac{\partial\phi}{\partial\aa(\bb)}+\nabla^*\aa\nabla\frac{\partial\phi}{\partial\aa(\bb)}=-\nabla^*\frac{\partial\aa(\cdot)}{\partial\aa(\bb)}(\nabla\phi+e(\bb) ).
  \end{equation}
  As in \eqref{eq:ODE:step-1:2} testing with $G(\cdot,x)$ yields
  \begin{equation}\label{eq:41}
    \frac{\partial\phi(x)}{\partial\aa(\bb)}=-(\nabla\phi(\bb)+e (\bb))\nabla G(\bb,x),
  \end{equation}
  and \eqref{eq:ODE:0} follows. By
  applying $\frac{\partial}{\partial\aa(\bb)}$ and $\nabla$ to
  \eqref{eq:41} we obtain the two identities
  \begin{eqnarray*}
    \frac{\partial}{\partial\aa(\bb)}\frac{\partial\phi(x)}{\partial\aa(\bb)}&=&-
    \frac{\partial(\nabla\phi(\bb)+e (\bb))}{\partial\aa(\bb)}\nabla
    G(\bb,x)-(\nabla\phi(\bb)+e (\bb))\frac{\partial\nabla
      G(\bb,x)}{\partial\aa(\bb)},\\
    \nabla\frac{\partial\phi(\bb)}{\partial\aa(\bb)}&=&-(\nabla\phi(\bb)+e (\bb))\nabla\nabla G(\bb,\bb).
  \end{eqnarray*}
  By combining the first with the second identity, \eqref{eq:rel4} and \eqref{eq:41}
  we get
  \begin{eqnarray*}
    \frac{\partial}{\partial\aa(\bb)}\frac{\partial\phi(x)}{\partial\aa(\bb)}&=&
    2(\nabla\phi(\bb)+e (\bb))\nabla\nabla G(\bb,\bb)\nabla
    G(\bb,x)\\
    &=&-2\frac{\partial\phi(x)}{\partial\aa(\bb)}\nabla\nabla G(\bb,\bb),
  \end{eqnarray*}
  and thus \eqref{eq:ODE:1}.
  \medskip

  \step 3 Positivity of $\nabla\nabla G(\bb,\bb)$ and
  $1-\aa(\bb)\nabla\nabla G(\bb,\bb)$.

  Let $\bb=(x_{\bb},y_{\bb})\in\B^d$ be fixed. An application of $\nabla$
  (w.~r.~t. the $y$-component) to
  \eqref{eq:D:G} yields 
  \begin{equation*}
    (\frac{1}{T}+\nabla^*\aa\nabla)\nabla G(\cdot,\bb)=\delta(\cdot-y_{\bb})-\delta(\cdot-x_{\bb}).
  \end{equation*}
  We test this equation with $\nabla G(\cdot,\bb)$ and get
  \begin{equation}\label{eq:46}
    \frac{1}{T}\sum_{x\in\Z^d}\left(\nabla
      G(x,\bb)\right)^2+\sum_{\bb'\in\B^d}\aa(\bb')\left(\nabla\nabla
      G(\bb',\bb)\right)^2=\nabla\nabla G(\bb,\bb).
  \end{equation}
  This identity implies that $\nabla\nabla G(\bb,\bb)$ and
  $1-\aa(\bb)\nabla\nabla G(\bb,\bb)$ are strictly positive. Indeed,
  $\nabla\nabla G(\bb,\bb)$ must be strictly positive, since otherwise $\sum_{x\in\Z^d}|\nabla G(x,\bb)|^2=0$ and thus $G(\cdot,\bb)=0$ in
  contradiction to \eqref{eq:D:G}. The strict positivity of
  $1-\aa(\bb)\nabla\nabla G(\bb,\bb)$ follows from the strict positivity of
  $\nabla\nabla G(\bb,\bb)-\aa(\bb)\left(\nabla\nabla
    G(\bb,\bb)\right)^2$. The latter can be seen by the following argument:
  \begin{eqnarray*}
    \lefteqn{\nabla\nabla G(\bb,\bb)-\aa(\bb)\left(\nabla\nabla
    G(\bb,\bb)\right)^2}&&\\
    &=&\left(\nabla\nabla G(\bb,\bb)-\frac{1}{T}\sum_{x\in\Z^d}\left(\nabla
      G(x,\bb)\right)^2-\sum_{\bb'\in\B^d}\aa(\bb')\left(\nabla\nabla
      G(\bb',\bb)\right)^2\right)\\
  &&+\frac{1}{T}\sum_{x\in\Z^d}\left(\nabla
      G(x,\bb)\right)^2+\sum_{\bb'\neq\bb}\aa(\bb')\left(\nabla\nabla
      G(\bb',\bb)\right)^2\\
    &\stackrel{\eqref{eq:46}}{\geq}&\frac{1}{T}\sum_{x\in\Z^d}\left(\nabla
      G(x,\bb)\right)^2>0.
  \end{eqnarray*}
\end{proof}

The next lemma establishes a (quantitative) link between the vertical and classical
partial derivative of $\phi_T$. 
\newcommand{\osc}{\mathop{\operatorname{osc}}}
\begin{lemma}\label{L:osc}
  Let $\bb\in\B^d$ be fixed.  For $T>0$ let $\phi_T$ and $G_T$ denote the modified corrector
  and the Green's function. Then 
  \begin{equation}
    \label{eq:osc:1}
    \left|\frac{\partial\phi_T(x=0)}{\partial\bb}\right|\leq\left(1+\frac{\aa(\bb)}{1-\aa(\bb)\nabla\nabla
        G_T(\bb,\bb)}\right)\left|\frac{\partial\phi_T(x=0)}{\partial\aa(\bb)}\right|.
  \end{equation}
\end{lemma}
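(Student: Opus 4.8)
The plan is to reduce the bound on the vertical derivative $\frac{\partial\phi_T(x=0)}{\partial\bb}=\phi_T(x=0)-\expec{\phi_T(x=0)}_\bb$ to a one-variable computation in the single conductance $\aa(\bb)\in[0,1]$, and then to integrate the scalar ODEs provided by Lemma~\ref{L:ODE}. First I would freeze all conductances $\{\aa(\bb')\}_{\bb'\neq\bb}$ and regard $\phi_T(x=0)$ and $\nabla\nabla G_T(\bb,\bb)$ as smooth functions of the scalar $a:=\aa(\bb)$ ranging over $[0,1]$ (this being precisely the setting of Lemma~\ref{L:ODE}), writing $f(a):=\phi_T(x=0)$, $g(a):=\frac{\partial\phi_T(x=0)}{\partial\aa(\bb)}=f'(a)$ and $h(a):=\nabla\nabla G_T(\bb,\bb)$. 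Since the conditional expectation $\expec{\cdot}_\bb$ integrates $f$ against the conditional law of $\aa(\bb)$ given the remaining bonds -- some probability measure $\nu$ on $[0,1]$ -- one has $\frac{\partial\phi_T(x=0)}{\partial\bb}=\int_0^1\bigl(f(\aa(\bb))-f(a')\bigr)\,\nu(da')$, hence the crude but decisive bound
\begin{equation*}
  \Bigl|\tfrac{\partial\phi_T(x=0)}{\partial\bb}\Bigr|\ \le\ \sup_{a'\in[0,1]}\bigl|f(\aa(\bb))-f(a')\bigr|,
\end{equation*}
which holds regardless of the (unknown) law $\nu$.

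Next I would integrate the ODEs. By the fundamental theorem of calculus $f(\aa(\bb))-f(a')=\int_{a'}^{\aa(\bb)}g(s)\,ds$, and from \eqref{eq:ODE:1} and \eqref{eq:ODE:2} we have, as functions of $s\in[0,1]$, the relations $g'=-2hg$ and $h'=-h^2$. These yield $\tfrac{d}{ds}(1/h)=1$, $\tfrac{d}{ds}(g/h^2)=0$, and $\tfrac{d}{ds}(g/h)=(g'h-gh')/h^2=-g$. The first two give $h(a')/h(\aa(\bb))=\bigl(1+h(\aa(\bb))(a'-\aa(\bb))\bigr)^{-1}$ and therefore $g(a')/h(a')=\frac{g(\aa(\bb))/h(\aa(\bb))}{1+h(\aa(\bb))(a'-\aa(\bb))}$, while the third gives $\int_{a'}^{\aa(\bb)}g\,ds=g(a')/h(a')-g(\aa(\bb))/h(\aa(\bb))$. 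Combining these, one obtains the closed form
\begin{equation*}
  f(\aa(\bb))-f(a')\ =\ -\,\frac{g(\aa(\bb))\,(a'-\aa(\bb))}{1+h(\aa(\bb))\,(a'-\aa(\bb))}.
\end{equation*}

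Then I would maximise over $a'\in[0,1]$. By the last assertion of Lemma~\ref{L:ODE}, $h(\aa(\bb))=\nabla\nabla G_T(\bb,\bb)>0$ and $1-\aa(\bb)h(\aa(\bb))>0$, which guarantees $1+h(\aa(\bb))(a'-\aa(\bb))\ge\min\{1,\,1-\aa(\bb)h(\aa(\bb))\}>0$ for all $a'\in[0,1]$, so the formula is legitimate on the whole interval. Splitting according to the sign of $a'-\aa(\bb)$: if $a'\ge\aa(\bb)$ then $\frac{a'-\aa(\bb)}{1+h(\aa(\bb))(a'-\aa(\bb))}\le a'-\aa(\bb)\le1$; if $a'\le\aa(\bb)$ then $\frac{\aa(\bb)-a'}{1-h(\aa(\bb))(\aa(\bb)-a')}\le\frac{\aa(\bb)-a'}{1-\aa(\bb)h(\aa(\bb))}\le\frac{\aa(\bb)}{1-\aa(\bb)h(\aa(\bb))}$. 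Taking the supremum and using $\max\{1,t\}\le 1+t$ for $t\ge0$, this leads to
\begin{equation*}
  \sup_{a'\in[0,1]}\bigl|f(\aa(\bb))-f(a')\bigr|\ \le\ |g(\aa(\bb))|\,\Bigl(1+\frac{\aa(\bb)}{1-\aa(\bb)\nabla\nabla G_T(\bb,\bb)}\Bigr),
\end{equation*}
and since $g(\aa(\bb))=\frac{\partial\phi_T(x=0)}{\partial\aa(\bb)}$ this is exactly \eqref{eq:osc:1}.

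Conceptually, the main obstacle is the very first step: recognising that the vertical derivative is dominated by the oscillation of $\aa(\bb)\mapsto\phi_T(x=0)$ on $[0,1]$, uniformly in the conditional law. After that the argument is bookkeeping -- the two scalar ODEs of Lemma~\ref{L:ODE} integrate in closed form and the maximisation over $a'$ is elementary. Two minor points to watch are that the denominator $1+h(\aa(\bb))(a'-\aa(\bb))$ stays positive across all of $[0,1]$ (secured by the positivity of $\nabla\nabla G_T(\bb,\bb)$ and of $1-\aa(\bb)\nabla\nabla G_T(\bb,\bb)$ in Lemma~\ref{L:ODE}), and that the degenerate case $g(\aa(\bb))=0$ -- equivalently $\nabla\phi_T(\bb)+e(\bb)=0$, cf.\ \eqref{eq:ODE:0} -- needs no separate treatment, since then both sides of \eqref{eq:osc:1} vanish.
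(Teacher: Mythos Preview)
Your proof is correct. The overall strategy coincides with the paper's: freeze the other conductances, bound the vertical derivative by the one-variable variation of $a\mapsto\phi_T(\aa^{\bb,a},x=0)$ on $[0,1]$, and then integrate the pair of scalar ODEs from Lemma~\ref{L:ODE}. The difference lies only in how the ODEs are exploited. The paper bounds $\bigl|\frac{\partial\phi_T(x=0)}{\partial\bb}\bigr|$ by the total variation $\int_0^1|\varphi(a)|\,da$ (with $\varphi=f'$ in your notation), writes $\varphi(a)=\exp(h(a))\varphi(a_0)$ with $h'=-2g$, and then estimates the integral $\int_0^1\exp(h)\,da$ using the explicit solution $g(a')=g(a_0)/(1+(a'-a_0)g(a_0))$ of $g'=-g^2$. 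You instead bound by the oscillation $\sup_{a'}|f(\aa(\bb))-f(a')|$, observe the two first integrals $\tfrac{d}{ds}(1/h)=1$ and $\tfrac{d}{ds}(g/h^2)=0$ together with $\tfrac{d}{ds}(g/h)=-g$, and thereby obtain the closed form $f(\aa(\bb))-f(a')=-g(\aa(\bb))(a'-\aa(\bb))/(1+h(\aa(\bb))(a'-\aa(\bb)))$, which you then maximise over $a'$. Your route is slightly more direct and yields the bound without splitting the integration interval; the paper's route is computationally equivalent but passes through the exponential representation of $\varphi$. Both land on the same constant $1+\aa(\bb)/(1-\aa(\bb)\nabla\nabla G_T(\bb,\bb))$.
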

\begin{proof}[Proof of Lemma~\ref{L:osc}]
  Fix $\aa\in\Omega$ and $\bb\in\B^d$. Set $a_0:=\aa(\bb)$. We shall
  use the following shorthand notation
  \begin{equation}\label{eq:44}
    \varphi(a):=\frac{\partial\phi_T(\aa^{\bb,a},x=0)}{\partial\aa(\bb)},\qquad
    g(a):=\nabla\nabla G_T(\aa^{\bb,a},\bb,\bb),\qquad (a\in[0,1]),
  \end{equation}
  where $\aa^{\bb,a}$ denotes the coefficient field obtained from
  $\aa$ by setting $\aa^{\bb,a}(\bb')=a$ if $\bb'=\bb$ and
  $\aa^{\bb,a}(\bb'):=\aa(\bb')$ else. With that notation \eqref{eq:ODE:1} and \eqref{eq:ODE:2} turn into
  \begin{eqnarray}
    \label{eq:L:osc:1a}
    \varphi'=-2g\varphi,\\
    \label{eq:L:osc:1b}
    g'=-g^2.
  \end{eqnarray}
  Since we have $\left|\frac{\partial\phi_T(x=0)}{\partial\bb}\right|\leq
  \int_0^1|\varphi(a)|\,da$, it suffices to show
  \begin{equation}\label{eq:L:osc:2}
    \int_0^1|\varphi(a)|\,da\leq\left(1+\frac{a_0}{1-a_0g(a_0)}\right)|\varphi(a_0)|.
  \end{equation}
  The positivity of $g$ and \eqref{eq:L:osc:1a} imply that $\varphi$
  is either strictly positive, strictly
  negative or that it vanishes identically. In the latter case, the claim is
  trivial. In the other cases we have 
  \begin{equation*}
    \varphi(a)=\exp(h(a))\varphi(a_0),\qquad\mbox{where }h(a):=\ln\frac{\varphi(a)}{\varphi(a_0)},
  \end{equation*}
  and \eqref{eq:L:osc:2} reduces to the inequality
  \begin{equation}
    \label{eq:19}
    \int_0^1\exp(h(a))\,da\leq 1+ \frac{a_0}{1-a_0g(a_0)}.
  \end{equation}
  From \eqref{eq:L:osc:1a} we learn that $h'=-2g$. Since $g>0$, $h$
  is decreasing. Combined with the identity
  $h(a_0)=0$ we get
  \begin{equation}\label{eq:20}
    h(a)\leq
    \left\{\begin{aligned}
        &2\int_a^{a_0}g(a')\,da'&\qquad&\mbox{for }a\in[0,a_0),\\
        &0&&\mbox{for }a\in[a_0,1].
    \end{aligned}\right.
  \end{equation}
  On the other hand, we learn from integrating \eqref{eq:L:osc:1b}
  that $g(a')=\frac{g(a_0)}{1+(a'-a_0)g(a_0)}$. Hence, for $a<a_0$ the
  right-hand side in \eqref{eq:20} turns into
  \begin{equation*}
    2\int_a^{a_0}g(a')\,da'=-2\ln(1+(a-a_0)g(a_0)),
  \end{equation*}
  which in combination with \eqref{eq:20} yields \eqref{eq:19}.
\end{proof}

Lemma~\ref{lem:2} is a direct consequence of \eqref{eq:osc:1},
\eqref{eq:ODE:0} and the following estimate:

\begin{lemma}\label{L:bound}
  Let $G_T$ denote the Green's function. Assume that (A1) is
  satisfied. Then for all $T>0$, $\aa\in\Omega$ and $\bb\in\B^d$ we have
  \begin{eqnarray}
    \label{eq:bound}
    1+\frac{\aa(b)}{1-\aa(b)\nabla\nabla G_T(\bb,\bb)}\lesssim \omega_0^2(\aa,\bb),
  \end{eqnarray}
  where $\lesssim$ means up to a constant that only depends on $d$.
\end{lemma}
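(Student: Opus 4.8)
The plan is to trade the left-hand side for a bound on $\nabla\nabla G_T$ evaluated in the \emph{deleted} environment $\aa^{\bb,0}$ -- precisely the environment that enters $\omega_0$. First I would rewrite the (potentially small) denominator using the scalar ODE for $g(a):=\nabla\nabla G_T(\aa^{\bb,a},\bb,\bb)$ that is already isolated in the proof of Lemma~\ref{L:osc}: by \eqref{eq:ODE:2} one has $g'=-g^2$, and $g>0$ throughout by Lemma~\ref{L:ODE}, so $1/g$ is affine on $[0,\aa(\bb)]$. Evaluating at $a=0$ and $a=\aa(\bb)$ yields $1-\aa(\bb)\nabla\nabla G_T(\bb,\bb)=(1+\aa(\bb)\,g(0))^{-1}$ with $g(0):=\nabla\nabla G_T(\aa^{\bb,0},\bb,\bb)$, hence $\frac{\aa(\bb)}{1-\aa(\bb)\nabla\nabla G_T(\bb,\bb)}=\aa(\bb)(1+\aa(\bb)g(0))\le 1+g(0)$ since $\aa(\bb)\in[0,1]$. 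So everything reduces to bounding $g(0)$; I expect in fact to obtain the clean estimate $g(0)\le\dist_{\aa^{\bb,0}}(x_\bb,y_\bb)$, which is $\le\omega_0(\aa,\bb)\le\omega_0^2(\aa,\bb)$ because $\dist_{\aa^{\bb,0}}(x_\bb,y_\bb)\ge 1$ and $\omega_0(\aa,\bb)=(\dist_{\aa^{\bb,0}}(x_\bb,y_\bb))^{d+2}$.

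To bound $g(0)$, I would apply the energy identity \eqref{eq:46} with the environment $\aa^{\bb,0}$ in place of $\aa$: writing $v:=\nabla G_T(\aa^{\bb,0},\cdot,\bb)$ for the $\bb$-difference of the Green's function in the deleted environment, \eqref{eq:46} reads $g(0)=\frac1T\|v\|_{\ell^2(\Z^d)}^2+\sum_{\bb'\in\B^d}\aa^{\bb,0}(\bb')|\nabla v(\bb')|^2=:E$, while at the same time $g(0)=\nabla v(\bb)=v(y_\bb)-v(x_\bb)>0$. If $\dist_{\aa^{\bb,0}}(x_\bb,y_\bb)=\infty$ there is nothing to prove, so assume it is finite and let $\pi$ be a $\dist_{\aa^{\bb,0}}$-minimizing path from $x_\bb$ to $y_\bb$; necessarily $\pi$ uses only bonds of positive conductance. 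Then by the discrete fundamental theorem of calculus and a weighted Cauchy--Schwarz inequality, $v(y_\bb)-v(x_\bb)\le\sum_{\bb'\in\pi}|\nabla v(\bb')|\le\bigl(\sum_{\bb'\in\pi}\aa^{\bb,0}(\bb')^{-1}\bigr)^{1/2}\bigl(\sum_{\bb'\in\pi}\aa^{\bb,0}(\bb')|\nabla v(\bb')|^2\bigr)^{1/2}$, where the first factor equals $\dist_{\aa^{\bb,0}}(x_\bb,y_\bb)^{1/2}$ by the definition of the chemical distance and the second is bounded by $E^{1/2}$. Hence $E=v(y_\bb)-v(x_\bb)\le\dist_{\aa^{\bb,0}}(x_\bb,y_\bb)^{1/2}E^{1/2}$, and since $E>0$ we conclude $g(0)=E\le\dist_{\aa^{\bb,0}}(x_\bb,y_\bb)$.

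Combining these, $1+\frac{\aa(\bb)}{1-\aa(\bb)\nabla\nabla G_T(\bb,\bb)}\le 2+g(0)\le 2+\dist_{\aa^{\bb,0}}(x_\bb,y_\bb)\lesssim\dist_{\aa^{\bb,0}}(x_\bb,y_\bb)\le(\dist_{\aa^{\bb,0}}(x_\bb,y_\bb))^{d+2}=\omega_0(\aa,\bb)\le\omega_0^2(\aa,\bb)$, using $\dist_{\aa^{\bb,0}}(x_\bb,y_\bb)\ge 1$ twice; the implicit constant is in fact absolute (hence a fortiori depends only on $d$). The two points that need a word of justification are that $1/g$ is affine on the \emph{whole} segment $[0,\aa(\bb)]$, which rests on the strict positivity of $g$ from Lemma~\ref{L:ODE}, and that the chemical geodesic avoids zero-conductance bonds so that the weighted Cauchy--Schwarz step is legitimate; neither is a real obstacle. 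Conceptually, the only thing that matters is that ``deleting the bond $\bb$'' is precisely what makes the Dirichlet energy of $v$ controllable by the chemical distance in $\aa^{\bb,0}$ -- rather than by the possibly infinite $\aa$-energy -- which is exactly why $\omega_0$ and not $\omega$ enters the bound.
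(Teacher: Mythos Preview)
Your proof is correct, and in fact cleaner and sharper than the paper's.

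For Step~1 (reduction to a bound on $g(0)$), the paper does \emph{not} integrate the ODE $g'=-g^2$ explicitly. Instead it introduces the auxiliary function $f(a):=\tfrac{1}{a}g(a)-g^2(a)$, observes that the left-hand side equals $g(a_0)/f(a_0)$, shows $f$ is nonincreasing, and evaluates $f(1)=g(0)/(1+g(0))^2$ to obtain $\tfrac{a_0}{1-a_0 g(a_0)}\le(1+g(0))^2$. Your use of the closed form $g(a)=g(0)/(1+ag(0))$ gives the identity $1-a_0 g(a_0)=(1+a_0 g(0))^{-1}$ in one line and the tighter bound $a_0(1+a_0 g(0))\le 1+g(0)$.

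For Step~2 (bounding $g(0)$), the paper feeds the energy identity~\eqref{eq:46} into the global coercivity estimate of Lemma~\ref{lem:coercivity-phys} and then retains only the diagonal term $\bb'=\bb$, which gives $g(0)\gtrsim\omega_0^{-1}(\aa,\bb)\,g(0)^2$, i.e.\ $g(0)\lesssim\omega_0(\aa,\bb)=\dist_{\aa^{\bb,0}}(x_\bb,y_\bb)^{d+2}$. Your single-path weighted Cauchy--Schwarz argument is precisely the pointwise estimate~\eqref{eq:11} underlying that lemma, applied directly to the one bond $\bb$; by avoiding the summation over $\bb$ you sidestep the summability constraint $p>d+1$ and obtain the sharp $g(0)\le\dist_{\aa^{\bb,0}}(x_\bb,y_\bb)$ with constant~$1$. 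In particular your argument actually proves $1+\tfrac{\aa(\bb)}{1-\aa(\bb)\nabla\nabla G_T(\bb,\bb)}\le 2+\dist_{\aa^{\bb,0}}(x_\bb,y_\bb)\lesssim\omega_0(\aa,\bb)$, strictly stronger than~\eqref{eq:bound}; the square on the right of~\eqref{eq:bound} and the exponent $d+2$ in $\omega_0$ are artifacts of the paper's detour through Lemma~\ref{lem:coercivity-phys}.
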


\begin{proof}[Proof of Lemma~\ref{L:bound}]

  \step 1 Reduction to an estimate for $\aa^{\bb,0}$.

  We claim that
  \begin{equation*}
    \frac{\aa(\bb)}{1-\aa(\bb)\nabla\nabla G_T(\aa,\bb,\bb)}\leq
    (1+\nabla\nabla G_T(\aa^{\bb,0},\bb,\bb))^2
  \end{equation*}
  For the argument let $\aa\in\Omega$ and $\bb\in\B^d$ be fixed. With
  the shorthand
  notation introduced in \eqref{eq:44}, the claim reads
  \begin{equation}\label{eq:bound:step1}
    \frac{a_0}{1-a_0g(a_0)}\leq (1+g(0))^2.
  \end{equation}
  For $a_0=0$ the
  statement is trivial. For $a_0>0$ consider the function
  \begin{equation*}
    f(a):=\frac{1}{a}g(a)-g^2(a),
  \end{equation*}
  with help of which the left-hand side in \eqref{eq:bound:step1} can be written as $\frac{g(a_0)}{f(a_0)}$.  The function $f$ is non-negative and decreasing, as can be seen by
  combining the inequality $0<g(a)<\frac{1}{a}$ from Lemma~\ref{L:ODE} with the identity
  $f'(a)=g(a)(g^2(a)-\frac{1}{a^2}+g^2(a)-\frac{1}{a}g(a))$ which
  follows from \eqref{eq:L:osc:1b}. The latter also implies that $g(1)=\frac{g(0)}{1+g(0)}$ and
  thus $f(1)=g(1)(1-g(1))=\frac{g(0)}{(1+g(0))^2}$.
  Hence, 
  \begin{equation*}
    \frac{a_0}{1-a_0g(a_0)}=\frac{g(a_0)}{f(a_0)}\leq \frac{g(a_0)}{f(1)}=
    (1+g(0))^2\frac{g(a_0)}{g(0)}\leq (1+g(0))^2;
  \end{equation*}
  in the last step we used in addition that $g(a_0)\leq g(0)$ which is a
  consequence of \eqref{eq:L:osc:1b}.
  \medskip

  \step 2 Conclusion.

  To complete the argument we only need to show that
  \begin{equation}\label{eq:48}
    \nabla\nabla G_T(\aa^{\bb,0},\bb,\bb)\lesssim \omega_0(\aa,\bb).
  \end{equation}
  For simplicity set $\aa_0:=\aa^{\bb,0}$. Note that
  $\omega_0(\aa,\bb)=\omega(\aa_0,\bb)$. From \eqref{eq:46} we obtain
  \begin{eqnarray*}
    \nabla\nabla G_T(\aa_0,\bb,\bb)&\stackrel{\eqref{eq:46}}{\geq}& \sum_{\bb'\in\B^d}\aa_0(\bb')\left(\nabla\nabla
    G_T(\aa_0,\bb',\bb)\right)^2\stackrel{\eqref{eq:coercivity}}{\gtrsim} \sum_{\bb'\in\B^d}\omega^{-1}(\aa_0,\bb')\left(\nabla\nabla
    G_T(\aa_0,\bb',\bb)\right)^2\\
&\geq& \omega^{-1}(\aa_0,\bb)\left(\nabla\nabla
    G_T(\aa_0,\bb,\bb)\right)^2.
  \end{eqnarray*}
  Dividing both sides by $\omega^{-1}(\aa_0,\bb)\nabla\nabla
  G_T(\aa_0,\bb,\bb)$ yields \eqref{eq:48}.
\end{proof}

\subsection{Proof of Lemma~\ref{lem:coercivity-phys} and Lemma~\ref{lem:coercivity-prob}}

\begin{proof}[Proof of Lemma~\ref{lem:coercivity-phys}]
Fix for a moment $\aa\in\Omega$. For $\bb\in\B^d$ with
$\dist_{\aa}(x_{\bb},y_{\bb})<\infty$, let $\pi_{\aa}(\bb)$ denote a shortest
open path that connects $x_{\bb}$ and $y_{\bb}$, i.e.
\begin{equation*}
  \dist_{\aa}(x_{\bb},y_{\bb})=\sum_{\bb'\in\pi_{\aa}(\bb)}\frac{1}{\aa(\bb')}.
\end{equation*}
Thanks to the
triangle inequality and the Cauchy-Schwarz inequality we
have
\begin{eqnarray*}
  |\nabla u(\bb)|&\leq&\sum_{\bb'\in\pi(\bb)}|\nabla
  u(\bb')|\leq
  \left(\sum_{\bb'\in\pi_{\aa}(\bb)}\frac{1}{\aa(\bb')}\right)^{\frac{1}{2}}\,\left(\sum_{\bb'\in\pi_{\aa}(\bb)}|\nabla
    u(\bb')|^2\aa(\bb')\right)^{\frac{1}{2}}\\
  &=&\dist_{\aa}^\frac{1}{2}(x_{\bb},y_{\bb})\left(\sum_{\bb'\in\pi_{\aa}(\bb)}|\nabla
    u(\bb')|^2\aa(\bb')\right)^{\frac{1}{2}}.
\end{eqnarray*}
Hence, using the convention $\frac{1}{\infty}=0$, we conclude that for all
$\bb\in\B^d$ and $\aa\in\Omega$:
\begin{equation}\label{eq:11}
  \dist^{-p}_{\aa}(x_{\bb},y_{\bb})|\nabla u(\bb)|^2\,\leq\,\dist^{1-p}_{\aa}(x_{\bb},y_{\bb})\sum_{\bb'\in\pi_{\aa}(\bb)}|\nabla
    u(\bb')|^2\aa(\bb').
\end{equation}
We drop the ``$\aa$'' in the notation from now on. Summation of
\eqref{eq:11} in
$\bb\in\B^d$ yields
\begin{eqnarray*}
  \sum_{\bb\in\B^d}\dist^{-p}(x_{\bb},y_{\bb})|\nabla u(\bb)|^2
  &\leq&\sum_{\bb\in\B^d}\sum_{\bb'\in\pi(\bb)}\dist^{1-p}(x_{\bb},y_{\bb})|\nabla
  u(\bb')|^2\aa(\bb')\\
  &=&\sum_{\bb'\in\B^d}\sum_{\bb\in\B^d\text{ with }\atop\pi(\bb)\ni\bb'}\dist^{1-p}(x_{\bb},y_{\bb})|\nabla
  u(\bb')|^2\aa(\bb').
\end{eqnarray*}
Since $\pi(\bb)$ is a shortest path, and because $\aa\leq 1$, we
have $\dist(x_{\bb},y_{\bb})\geq |x_{\bb}-x_{\bb'}|+1$ for all
  $\bb,\bb'\in\B^d$ with $\bb'\in\pi(\bb)$. Combined  with the
  previous estimate we get
\begin{eqnarray*}
  \sum_{\bb\in\B^d}\dist^{-p}(x_{\bb},y_{\bb})|\nabla u(\bb)|^2
  &\leq&\sum_{\bb'\in\B^d}\sum_{\bb\in\B^d\text{ with }\atop\pi(\bb)\ni\bb'}(|x_{\bb}-x_{\bb'}|+1)^{1-p}|\nabla
  u(\bb')|^2\aa(\bb')\\
  &\leq&C(d,p)\,\sum_{\bb'\in\B^d}|\nabla
  u(\bb')|^2\aa(\bb').
\end{eqnarray*}
\end{proof}

\begin{proof}[Proof of Lemma~\ref{lem:coercivity-prob}]
  Fix $\bb\in\B^d$. For $L\in\N$ consider the indicator function
  \begin{equation}\label{eq:13}
    \chi_L(\aa):=
    \left\{\begin{aligned}
        &1&&\text{if }L\leq\dist_{\aa}(x_{\bb},y_{\bb})<2L,\\
        &0&&\text{else}.    
      \end{aligned}\right.
  \end{equation}
  With the convention $\frac{1}{\infty}=0$, we have
  \begin{equation}\label{eq:15}
    \sum_{k=0}^\infty\chi_{2^k}(\aa)\dist^{-p}_{\aa}(x_{\bb},y_{\bb})=\dist^{-p}_{\aa}(x_{\bb},y_{\bb})
  \end{equation}
  for all $\aa\in\Omega$. In the following we drop ``$\aa$''
  in the notation. We recall \eqref{eq:11} in the form of
  \begin{equation}\label{eq:14}
    \chi_L\dist^{-p}(x_{\bb},y_{\bb})|\nabla u(\bb)|^2
    \,\leq\,\chi_L\dist^{1-p}(x_{\bb},y_{\bb})\sum_{\bb'\in\pi(\bb)}|\nabla
    u(\bb')|^2\aa(\bb').
  \end{equation}
  From $\aa\leq 1$ and $\dist(x_{\bb},y_{\bb})<2L$ for $\chi_L\neq 0$, cf. \eqref{eq:13},
  we learn that $\pi(\bb)$ is contained in the box $Q_{2L}(x_{\bb})$.
  Hence, \eqref{eq:14} turns into
  \begin{equation*}
    \chi_L\dist^{-p}(x_{\bb},y_{\bb})|\nabla u(\bb)|^2
    \,\stackrel{\eqref{eq:13}}{\leq}\,\chi_L L^{1-p}\sum_{\bb'\in Q_{2L}(x_{\bb})}|\nabla
    u(\bb')|^2\aa(\bb').
  \end{equation*}
  We take the expectation on both sides and appeal to stationarity:
  \begin{eqnarray*}
    \expec{\chi_L\dist^{-p}(x_{\bb},y_{\bb})|\nabla u(\bb)|^2}
    &\leq&L^{1-p}\sum_{\bb'\in Q_{2L}(x_{\bb})}\expec{\chi_L|\nabla
    u(\bb')|^2\aa(\bb')}\\
    &\stackrel{\chi_L\leq 1}{\leq}&L^{1-p}\sum_{x\in
      B_{2L}(x_{\bb})}\sum_{\bb'=\{x,x+e_i\}\atop i=1,\ldots,d}\expec{|\nabla
    u(\bb')|^2\aa(\bb')}\\
    &\stackrel{\text{stationarity}}{\leq}& L^{1-p}|B_{2L}(0)|\sum_{\bb'=\{0,e_i\}\atop i=1,\ldots,d}\expec{|\nabla
    u(\bb')|^2\aa(\bb')}.
  \end{eqnarray*}
  Using $1+d-p<0$ we get
  \begin{eqnarray*}
    \expec{\dist^{-p}(x_{\bb},y_{\bb})|\nabla u(\bb)|^2}
    &\stackrel{\eqref{eq:15}}{=}&\sum_{k=0}^\infty\expec{\chi_{2^k}\dist^{-p}(x_{\bb},y_{\bb})|\nabla u(\bb)|^2}\\
    &\leq& C(p,d)\,\sum_{\bb'=\{0,e_i\}\atop i=1,\ldots,d}\expec{|\nabla
    u(\bb')|^2\aa(\bb')}.
  \end{eqnarray*}
\end{proof}
   
\subsection{Proof of Proposition~\ref{P1}  -- Green's function estimates}\label{S:green}
We first establish an estimate  for
the Green's function itself:
\begin{lemma}\label{L:P1:2}
  Let $d\geq2$ and consider $u,f\in\ell^1(\Z^d)$ with
  \begin{equation}\label{L:P1:2-1}
    \nabla^*\aa\nabla u=f\qquad\text{in }\Z^d.
  \end{equation}
  Then for all $\frac{2d}{d+2}<p<2$, $R\geq 1$ and $x_0\in\Z^d$ we have
  \begin{equation}\label{L:4:a}
    \sum_{x\in B_R(x_0)}|u(x)-\bar u|\lesssim
    C\,R^{2}\sum_{x\in\Z^d}|f(x)|.
  \end{equation}
  Here, $\bar u:=\frac{1}{|B_R(x_0)|}\sum_{x\in B_R(x_0)}u(x)$
  denotes the average of $u$ on $B_R(x_0)$,
  $C:=C(\aa,Q_R(x_0),\tfrac{p}{2-p})$, and $\lesssim$
  means $\leq$ up to a  constant that only depends on
  $d$ and $p$.
\end{lemma}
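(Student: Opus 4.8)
The plan is to prove Lemma~\ref{L:P1:2} by duality, reducing the $\ell^1$-type bound on $u-\bar u$ over a ball to an $\ell^\infty$-to-$\ell^{p'}$ bound on the gradient of the solution to a dual problem, and then invoking the weighted coercivity estimate of Lemma~\ref{lem:coercivity-phys} to pass from the degenerate energy to an unweighted $\ell^2$-type quantity that can be controlled by standard constant-coefficient Sobolev/Calder\'on--Zygmund estimates on $\Z^d$. Concretely, fix a function $g$ supported on $B_R(x_0)$ with $\sum_x g(x)=0$ (the mean-zero condition reflects that only $u-\bar u$ is controlled) and $\|g\|_{\ell^{p'}}\le 1$, where $p'=\frac{p}{p-1}$ is the dual exponent, and let $v$ solve $\nabla^*\aa\nabla v = g$ on $\Z^d$. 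Then by discrete integration by parts \eqref{int-by-parts},
\begin{equation*}
  \sum_{x\in B_R(x_0)}(u(x)-\bar u)\,g(x)=\sum_x u\,\nabla^*\aa\nabla v=\sum_{\bb}\aa(\bb)\nabla u(\bb)\nabla v(\bb)=\sum_x v\,\nabla^*\aa\nabla u=\sum_x v f,
\end{equation*}
so it suffices to bound $\big|\sum_x vf\big|\le\|f\|_{\ell^1}\,\mathrm{osc}$-type control of $v$, i.e. one needs $\|v-c\|_{\ell^\infty}\lesssim C R^2$ for a suitable constant $c$ (again only oscillation is available, matching the mean-zero $g$).

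The core analytic step is therefore the a priori estimate $\|v-c\|_{\ell^\infty(\Z^d)}\lesssim C(\aa,Q_R(x_0),\tfrac{p}{2-p})\,R^2$ for $v$ solving $\nabla^*\aa\nabla v=g$ with $\|g\|_{\ell^{p'}}\le 1$ supported in $B_R(x_0)$. Here I would first use the energy identity $\sum_\bb\aa(\bb)|\nabla v(\bb)|^2=\sum_x g v$ together with Lemma~\ref{lem:coercivity-phys} (applied with an exponent $p_0>d+1$, say related to $\tfrac{p}{2-p}$ after H\"older) to get $\sum_\bb\dist_\aa^{-p_0}(x_\bb,y_\bb)|\nabla v(\bb)|^2\lesssim\sum_x gv$. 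Then one splits $\sum_\bb|\nabla v(\bb)|^2 = \sum_\bb \big(\dist_\aa^{-p_0}|\nabla v|^2\big)\cdot\dist_\aa^{p_0}$ and applies H\"older in $\bb$ with exponents chosen so that the weight factor produces exactly $C(\aa,Q_R,\tfrac{p}{2-p})$ (this is where the definition \eqref{eq:C} of the spatial average of $\omega=\dist_\aa^{d+2}$ and the exponent $\tfrac{p}{2-p}$ enter, and why the constraint $\frac{2d}{d+2}<p<2$ appears: it guarantees $\tfrac{p}{2-p}$ is in the admissible range and that the Sobolev exponent $p^*=\frac{dp}{d-p}$ is finite). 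This converts the degenerate estimate into $\|\nabla v\|_{\ell^p}\lesssim C^{1/2}\|g\|_{\ell^{p'}}^{1/2}\big(\sum gv\big)^{1/2}$-type control; combining with the discrete Sobolev inequality $\|v-c\|_{\ell^{p^*}}\lesssim\|\nabla v\|_{\ell^p}$ and a bootstrap/absorption (estimating $\sum gv$ by $\|g\|_{\ell^{p'}}\|v-c\|_{\ell^p}$ on the support, itself controlled via $\ell^{p^*}$ and the finite volume $R^d$) closes the loop and yields $\|v-c\|_{\ell^\infty}\lesssim C R^2$ after tracking the powers of $R$ from the finitely-many volume factors.

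The main obstacle I anticipate is the bookkeeping of exponents: one must simultaneously (i) keep the scaling $R^2$ sharp, which forces every application of H\"older that trades $\ell^q$ for $\ell^{q'}$ on a ball to be paid for by the correct power of $|B_R|\sim R^d$; (ii) land exactly on the weight-average $C(\aa,Q_R(x_0),\tfrac{p}{2-p})$ rather than some other moment of $\omega$; and (iii) respect the range $\frac{2d}{d+2}<p<2$ throughout so that $p^*$ is finite and the coercivity exponent $p_0$ can be taken $>d+1$. A secondary technical point is the passage from $u,f\in\ell^1$ to the duality argument: one should first establish the estimate for $f$ (equivalently $g$) finitely supported and then extend by density/monotone convergence, and one must be careful that the additive constants $\bar u$ and $c$ are handled consistently — in particular the Sobolev inequality only controls $v$ up to a constant, which is precisely why the test function $g$ is taken mean-zero, matching the appearance of $\bar u$ on the left-hand side of \eqref{L:4:a}. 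I would also note that, since only spatial averages of $\omega$ over $Q_R(x_0)$ enter, the estimate is genuinely local in $\aa$, consistent with Proposition~\ref{P1} which will be deduced from this lemma by applying it on dyadic annuli to $u=G_T(\aa,\cdot,0)$ (absorbing the zeroth-order term $\tfrac1T G_T$ into the right-hand side) together with a Caccioppoli inequality to pass from $u$ to $\nabla u$.
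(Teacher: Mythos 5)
Your duality reduction has a genuine gap at the core analytic step. To bound $\big|\sum_x vf\big|$ you need $\sup_{x\in\Z^d}|v(x)-c|\lesssim C R^2$ with $C=C(\aa,Q_R(x_0),\tfrac{p}{2-p})$, a \emph{global} $\ell^\infty$ bound on the dual solution with a \emph{local} constant depending only on $\aa$ restricted to $Q_R(x_0)$. The chain you sketch (energy identity, Lemma~\ref{lem:coercivity-phys} plus H\"older, discrete Sobolev) cannot produce this. First, H\"older applied to isolate the weight factor as $C(\aa,Q_R(x_0),\tfrac{p}{2-p})$ forces you to restrict the sum to $Q_R(x_0)$, but $\nabla v$ is \emph{not} supported there; the Gagliardo--Nirenberg--Sobolev inequality on $\Z^d$ requires the \emph{full} $\|\nabla v\|_{\ell^p(\B^d)}$, which would bring in a weight average over all of $\Z^d$, not over $Q_R(x_0)$. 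Second, even granting a full $\ell^p$ gradient bound, Sobolev gives $\ell^{p^*}$ control with $p^*=\tfrac{dp}{d-p}<\infty$, not $\ell^\infty$; passing from $\ell^{p^*}$ to $\ell^\infty$ requires a De Giorgi/Moser iteration — precisely the machinery you hoped to avoid, now relocated into the dual problem where the support of $v$ makes it harder, not easier. In short, the reduction does not close: the hardest part (a sup bound on a solution of a degenerate elliptic equation) has merely been moved, and in a form where the locality of the constant $C$ is no longer apparent.

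The paper circumvents both difficulties by a direct De Giorgi--type truncation argument on $u$ itself, which makes crucial use of the $\ell^1$ structure of $f$: with the median $M(u)$ of $u$ on $B_R(x_0)$, $v:=u-M(u)$ and the truncation $v_M:=\max\{\min\{v,M\},0\}$ one has $|v_M|\le M$ everywhere, so testing $\nabla^*\aa\nabla u=f$ with $v_M$ gives the energy bound $\sum_{\B^d}\nabla v_M\,\aa\nabla v_M\le M\sum_{\Z^d}|f|$ \emph{without any sup bound on a dual solution}. Because $v_M$ vanishes on at least half of $B_R(x_0)$, the Sobolev--Poincar\'e inequality applies locally on $B_R(x_0)$, and the H\"older/coercivity step naturally produces the local weight average $C(\aa,Q_R(x_0),\tfrac{p}{2-p})$ since $\omega$ only enters on $Q_R(x_0)$. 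Chebyshev then yields a weak-type estimate on the level sets $\{|v|>M\}\cap B_R(x_0)$ with the sharp $M^{-p^*/2}$ decay (here $p>\tfrac{2d}{d+2}$ guarantees $p^*/2>1$), and the layer-cake formula with the optimal choice $M\sim CR^{2-d}$ produces the $\ell^1$ bound. Your intuition about the role of the exponent $\tfrac{p}{2-p}$, the Sobolev scale $p^*$, and the local nature of the constant is correct, but the route through duality is the wrong reduction; the direct truncation of $u$ is what makes those ingredients fit together with the $\ell^1$ right-hand side.
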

\begin{proof}[Proof of Lemma~\ref{L:P1:2}]
   W.~l.~o.~g. we assume $\sum_{\Z^d}|f|=1$ and $R\in\N$. To shorten
  the notation  we write $B_R$ and $Q_R$ for $B_R(x_0)$ and
  $Q_R(x_0)$, respectively.
  Let $M(u)$ denote a median of $u$ on $B_R$, i.~e.
  \begin{equation*}
    |\{u\geq M(u)\}\cap B_R|,    |\{u\leq M(u)\}\cap B_R|\geq\frac{1}{2}|B_R|.
  \end{equation*}
  By Jensen's inequality we have $|\bar u-M(u)|\leq\frac{1}{|B_R|}\sum_{B_R}|u-M(u)|$,
  so that it suffices to prove for $v:=u-M(u)$ the estimate
  \begin{equation*}
    \sum_{B_R}|v|\lesssim C\,R^2\sum_{\Z^d} |f|
    =C\,R^2.
  \end{equation*}
  For $0\leq M<\infty$ consider the cut-off version of $v$
  \begin{equation*}
    v_M:=\max\{\min\{v,M\},0\}.
  \end{equation*}
  Then $v_M$ satisfies 
  \begin{eqnarray*}
    \sum_{\B^d}\nabla v_M\,\aa\nabla v_M=\sum_{\B^d}\nabla u\,\aa\nabla v_M.
  \end{eqnarray*}
  Since $u\in\ell^1(\Z^d)$ (by assumption) and $v_M\in\ell^\infty(\Z^d)$ (by construction), we may integrate by parts:
  \begin{equation*}
   \sum_{\B^d}\nabla u\,\aa\nabla v_M=
   \sum_{\Z^d} v_M\,\nabla^*\aa\nabla u=\sum_{\Z^d} fv_M\leq
   M\sum_{\Z^d}|f|=M.
  \end{equation*}
  Hence,
  \begin{equation}\label{eq:L1:1}
    \sum_{\B^d}\nabla v_M\,\aa\nabla v_M\leq M.
  \end{equation}
  Set $p^*=\frac{pd}{d-p}$ and $q^*:=\frac{p^*}{p^*-1}$. 
  By construction  we have $|\{v_M=0\}\cap B_R|=|\{v_M\leq 0\}\cap B_R|\geq \frac{1}{2}|B_R|$. 
  Hence,   the Sobolev-Poincar\'e inequality yields
  \begin{equation*}
    \left(R^{-d}\sum_{B_R}|v_M|^{p^*}\right)^{\frac{1}{p^*}}
    \lesssim R\left(R^{-d}\sum_{Q_R}|\nabla v_M|^p\right)^{\frac{1}{p}}.
  \end{equation*}
  Lemma \ref{lem:coercivity-phys} combined with H\"older's inequality
  with exponents $(\frac{2}{2-p},\frac{2}{p})$ yields
  \begin{eqnarray}\nonumber
    \left(R^{-d}\sum_{Q_R}|\nabla v_M|^{p}\right)^{\frac{1}{p}}
    &=&
    \left(R^{-d}\sum_{Q_R}\omega^{\frac{p}{2}}|\nabla
      v_M|^{p}\omega^{-\frac{p}{2}}\right)^{\frac{1}{p}}\\\nonumber
    &\leq&
    \left(R^{-d}\sum_{Q_R}\omega^{\frac{p}{2-p}}\right)^{\frac{2-p}{2p}}
    \left(R^{-d}\sum_{Q_R}|\nabla
      v_M|^2\omega^{-1}\right)^{\frac{1}{2}}\\\label{eq:coercivity2}
    &\stackrel{\text{Lemma~\ref{lem:coercivity-phys}}}{\lesssim}&
    C^{\frac{1}{2}}\ 
    \left(R^{-d}\sum_{\B^d}\nabla v_M\,\aa\nabla
      v_M\right)^{\frac{1}{2}},
  \end{eqnarray}
  so that
  \begin{align}
    \label{eq:L1:2}
      \left(R^{-d}\sum_{B_R}|v_M|^{p^*}\right)^{\frac{1}{p^*}}\, 
      \lesssim\, C^{\frac{1}{2}}R\left(R^{-d}
      \sum_{\B^d}\nabla v_M\,\aa\nabla v_M\right)^{\frac{1}{2}}
      \stackrel{\eqref{eq:L1:1}}{\lesssim}\,(C
      R^{2-d}M)^{\frac{1}{2}}.
  \end{align}
  Next we use Chebyshev's inequality in the form of
  \begin{equation*}
    M\left(R^{-d}|\{\,v>M\,\}\cap B_R|\right)^{\frac{1}{p^*}}
    \lesssim    \left(R^{-d}\sum_{B_R}|v_M|^{p^*}\right)^{\frac{1}{p^*}}.
  \end{equation*}
  With \eqref{eq:L1:2} we get
  \begin{equation*}
    R^{-d}|\{\,v>M\,\}\cap B_R|\lesssim C^{\frac{p^*}{2}}\,R^{(2-d)\frac{p^*}{2}}M^{-\frac{p^*}{2}},
  \end{equation*}
  which upgrades by symmetry to
  \begin{equation*}
    R^{-d}|\{\,|v|>M\,\}\cap B_R|\lesssim C^{\frac{p^*}{2}}\,R^{(2-d)\frac{p^*}{2}}M^{-\frac{p^*}{2}}.
  \end{equation*}
  Since $p>\frac{2d}{d+2}$ (by assumption), we have $\frac{p^*}{2}>1$ and the ``wedding cake
  formula'' for $M:=CR^{2-d}$ yields
  \begin{eqnarray*}
    R^{-d}\sum_{B_R}|v|&=&\int_0^\infty R^{-d}|\{\,|v|>M'\,\}\cap
    B_R|\,dM'\,\lesssim\, M+\int_M^\infty R^{-d}|\{\,|v|>M'\,\}\cap B_R|\,dM'\\
    &\lesssim& M + C^{\frac{p^*}{2}}R^{(2-d)\frac{p^*}{2}}M^{1-\frac{p^*}{2}}\,\lesssim\,CR^{2-d}.
\end{eqnarray*}
  \end{proof}
A careful Caccioppoli estimate combined with the previous lemma yields:
\begin{lemma}\label{L:P1:2b}
  Let $d\geq2$, $x_0\in\Z^d$ and $R\geq 1$. Consider $f\geq 0$ and $u$
  related as
  \begin{equation}\label{eq:L2:1}
    \nabla^*\aa\nabla u=-f\qquad\text{in }B_{2R}(x_0).
  \end{equation}
  Then for $\frac{2d}{d+2}<p<2$ we have
  \begin{equation}\label{L2:1}
    \left(R^{-d}\sum_{Q_R(x_0)}|R\nabla u|^p\right)^{\frac{1}{p}}
		\lesssim C^{\frac{\alpha}{2}}\,\left(R^{-d}\sum_{B_{2R}(x_0)}|u|+\left(R^{2-d}\sum_{B_{2R}(x_0)} fu_-\right)^{\frac{1}{2}}\right),
  \end{equation}
	where $u_-:=\max\{-u,0\}$ denotes the negative part of $u$, 
  $C:=C(\aa, Q_{2R}(x_0),\tfrac{p}{2-p})$,
  $\alpha:=2\frac{p^*-1}{p^*-2}$ and $p^*:=\frac{dp}{d-p}$. Here
  $\lesssim$ stands for $\leq$ up to a constant that only depends on
  $p$ and $d$
\end{lemma}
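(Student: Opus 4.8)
The plan is to prove a \emph{degenerate Caccioppoli inequality} on a pair of concentric cubes, to feed its right-hand side into the weighted coercivity estimate of Lemma~\ref{lem:coercivity-phys} exactly as in the proof of Lemma~\ref{L:P1:2} (this is where the factor $C^{1/2}$ enters, via H\"older against the weight $\omega=\dist_{\aa}^{d+2}$), and finally to upgrade the $L^2$-norm of $u$ that survives on the right-hand side into the $L^1$-norm by interpolating against the discrete Sobolev--Poincar\'e inequality and running a standard absorption/iteration. Keeping track of the power of $C$ lost in this last step is what turns $C^{1/2}$ into $C^{\alpha/2}$, with $\alpha/2=\tfrac{p^*-1}{p^*-2}$.

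\emph{Step 1 (degenerate Caccioppoli).} For $R\le\rho<\rho'\le 2R$ pick $\eta\colon\Z^d\to[0,1]$ with $\eta\equiv1$ on a one-step fattening of $B_\rho(x_0)$, $\operatorname{supp}\eta\subseteq B_{\rho'}(x_0)$ and $|\nabla\eta|\lesssim(\rho'-\rho)^{-1}$. Test \eqref{eq:L2:1} with the finitely supported function $\eta^2u$, integrate by parts via \eqref{int-by-parts}, expand with the discrete Leibniz rule $\nabla(\eta^2u)=\overline{\eta^2}\,\nabla u+\overline u\,\nabla(\eta^2)$ and $\nabla(\eta^2)=2\overline\eta\,\nabla\eta$ (where $\overline w(\bb):=\tfrac12(w(x_{\bb})+w(y_{\bb}))$), absorb the cross term by Young's inequality (using Jensen's inequality $(\overline\eta)^2\le\overline{\eta^2}$ and $\aa\le1$), and use $f\ge0$ so that $-uf\le u_-f$ pointwise. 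This yields
\begin{equation*}
  \sum_{\bb\in\B^d}\overline{\eta^2}(\bb)\,\aa(\bb)\,|\nabla u(\bb)|^2
  \ \lesssim\ \frac{1}{(\rho'-\rho)^2}\sum_{x\in B_{\rho'}(x_0)}u^2(x)+\sum_{x\in B_{\rho'}(x_0)}f(x)\,u_-(x),
\end{equation*}
and, by the Leibniz rule once more, the same bound for $\sum_{\B^d}\aa\,|\nabla(\eta u)|^2$.

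\emph{Step 2 (coercivity + H\"older).} Apply Lemma~\ref{lem:coercivity-phys} with the admissible exponent $d+2>d+1$ to $w:=\eta u$ (note $\omega^{-1}=\dist_{\aa}^{-(d+2)}$, and $\nabla w=\nabla u$ on $Q_\rho(x_0)$ since $\eta\equiv1$ there), then H\"older in $\bb$-space with exponents $(\tfrac{2}{2-p},\tfrac{2}{p})$, precisely as in the derivation of \eqref{eq:coercivity2}. Abbreviating $\Phi(\rho):=(\rho^{-d}\sum_{Q_\rho(x_0)}|R\nabla u|^p)^{1/p}$, $N:=R^{-d}\sum_{B_{2R}(x_0)}|u|$ and $F:=(R^{2-d}\sum_{B_{2R}(x_0)}f\,u_-)^{1/2}$, and enlarging $Q_\rho$ to $Q_{2R}$ in the weight average at the cost of a dimensional constant (so that $C=C(\aa,Q_{2R}(x_0),\tfrac{p}{2-p})$ as in \eqref{eq:C}), Steps 1--2 combine, after multiplying by $\rho\simeq R$, to
\begin{equation*}
  \Phi(\rho)\ \lesssim\ C^{1/2}\,\frac{R}{\rho'-\rho}\left(R^{-d}\sum_{B_{\rho'}(x_0)}u^2\right)^{1/2}+\,C^{1/2}F\qquad (R\le\rho<\rho'\le 2R).
\end{equation*}

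\emph{Step 3 ($L^2\to L^1$, iteration, and the main obstacle).} Since $p>\tfrac{2d}{d+2}$ the Sobolev exponent $p^*=\tfrac{dp}{d-p}$ satisfies $p^*>2$; put $\theta:=\tfrac{p^*}{2(p^*-1)}\in(0,1)$. By H\"older, $R^{-d}\sum_{B_{\rho'}}u^2\le(R^{-d}\sum_{B_{\rho'}}|u|^{p^*})^{1/(p^*-1)}(R^{-d}\sum_{B_{\rho'}}|u|)^{(p^*-2)/(p^*-1)}$, and the discrete Sobolev--Poincar\'e inequality on $B_{\rho'}(x_0)$ (subtracting the mean, whose modulus is $\le N$) gives $(R^{-d}\sum_{B_{\rho'}}|u|^{p^*})^{1/p^*}\lesssim\Phi(\rho')+N$. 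Hence
\begin{equation*}
  \Phi(\rho)\ \lesssim\ C^{1/2}\,\frac{R}{\rho'-\rho}\left(\Phi(\rho')^{\theta}N^{1-\theta}+N\right)+\,C^{1/2}F .
\end{equation*}
Young's inequality with exponents $(\tfrac1\theta,\tfrac1{1-\theta})$ turns the first term into $\tfrac12\Phi(\rho')+c\left(\tfrac{R}{\rho'-\rho}\right)^{1/(1-\theta)}C^{1/(2(1-\theta))}N$, and here $\tfrac1{2(1-\theta)}=\tfrac{p^*-1}{p^*-2}=\tfrac\alpha2$ is exactly the claimed power. A standard iteration over a geometric sequence of radii in $[R,2R]$ (legitimate because $\Phi$ is a bounded function of $\rho$, and because $\omega\ge1$ forces $C\ge1$, so $C^{1/2}\le C^{\alpha/2}$) then absorbs all $\Phi(\rho')$-terms and yields $\Phi(R)\lesssim C^{\alpha/2}(N+F)$, which is \eqref{L2:1}. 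The main obstacle is exactly this last step: the bare Caccioppoli inequality controls only $(R^{-d}\sum u^2)^{1/2}$, and replacing it by the genuinely weaker $R^{-d}\sum|u|$ forces the self-improving iteration whose sole effect is to manufacture the extra power of the random constant; the exponent comes out as $\alpha/2$ rather than something larger precisely because the interpolation goes through the \emph{Sobolev} exponent $p^*$ and the Young parameter is chosen independently of the iteration level. (In the discrete setting only finitely many nested cubes fit between $B_R$ and $B_{2R}$, so the geometric iteration stabilises at unit scale; this is harmless, the frozen tail contributing $2^{-k}\Phi\to0$.)
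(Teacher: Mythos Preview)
Your argument is correct and yields the same exponent $C^{\alpha/2}$ as the paper, but the route is genuinely different. The paper does \emph{not} iterate over nested cubes: it chooses a \emph{single} cut-off of the form $\eta=(\text{linear cut-off})^{\alpha}$ between $B_{R+1}$ and $B_{2R-1}$, so that pointwise $R|\nabla\eta|\lesssim\eta^{\theta}$ with $\theta=(\alpha-1)/\alpha=\tfrac{p^*}{2(p^*-1)}$ (exactly your $\theta$). Combined with the exact identity $|\nabla(u\eta)|^2=\nabla u\,\nabla(u\eta^2)+u(x_{\bb})u(y_{\bb})|\nabla\eta|^2$, this turns the error directly into $R^{-2}(u\eta)^{2\theta}|u|^{2(1-\theta)}$, after which H\"older, Sobolev for $u\eta$, and Young with a small parameter absorb in one step; the paper then has to treat separately the discrete ``boundary'' edges where $\eta$ vanishes at one endpoint. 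Your approach replaces this one-shot absorption by the standard Giaquinta-type hole-filling iteration: ordinary cut-offs on a chain of cubes, the crude bound $|\nabla\eta|\lesssim(\rho'-\rho)^{-1}$, the same $L^1$--$L^{p^*}$ interpolation, and Young with exponent $1/(1-\theta)$, which is precisely what manufactures $C^{1/(2(1-\theta))}=C^{\alpha/2}$. What your approach buys is that no special cut-off has to be designed and no boundary-edge term appears; what the paper's approach buys is that there is no iteration to run.

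One point deserves a little more care than your parenthetical remark gives it. The iteration lemma you invoke assumes the inequality $\Phi(\rho)\le\tfrac12\Phi(\rho')+A(\rho'-\rho)^{-\gamma}+B$ for \emph{all} $R\le\rho<\rho'\le 2R$, while your Caccioppoli step only supplies it when the gap $\rho'-\rho$ exceeds a fixed constant (a valid discrete cut-off needs room). Your claim that ``the frozen tail $2^{-k}\Phi\to0$'' presumes the inequality continues to hold once the geometric gaps drop below that threshold, which is not automatic: for those $k$ the cubes may still differ while no cut-off exists. This is a genuine, if minor, wrinkle of the discrete setting; it can be closed (for instance by running the geometric iteration only while the gap is admissible and bounding the single leftover term $2^{-K}\tilde\Phi(\rho_K)$ via one more Caccioppoli step, using that $K\sim\log R$ makes $2^{-K}$ polynomially small in $R$), but it is precisely the nuisance the paper's power-cut-off construction is designed to avoid.
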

\begin{proof}[Proof of Lemma~\ref{L:P1:2b}]
  \step 1 Caccioppoli estimate.
  
  We claim that  for every cut-off function $\eta$ that is supported in $B_{2R-1}(x_0)$ (so that in particular $\nabla\eta=0$
  outside of $Q_{2R}(x_0)$) we have
  \begin{equation}\label{eq:cacc2}
    \left(R^{-d}\sum_{\B^d}|R\nabla(u\eta)|^p\right)^{\frac{1}{p}}\lesssim
    C^{\frac{1}{2}}\left(R^{2-d}\sum_{\Z^d}fu_-\eta^2+R^{-d}\sum_{\bb\in\B^d}u(x_{\bb})u(y_{\bb})|R\nabla\eta(\bb)|^2\aa(\bb)   \right)^{\frac{1}{2}}.
  \end{equation}
 Indeed, we get with Lemma~\ref{lem:coercivity-phys} (using an argument
 similar to \eqref{eq:coercivity2}):
 \begin{equation*}
   \left(R^{-d}\sum_{\B^d}|R\nabla(u\eta)|^p\right)^{\frac{1}{p}}=\left(R^{-d}\sum_{Q_{2R}(x_0)}|R\nabla(u\eta)|^p\right)^{\frac{1}{p}}\lesssim
   \,C^{\frac{1}{2}}\left(R^{-d}\sum_{\B^d}|R\nabla(u\eta)|^2\aa\right)^{\frac{1}{2}},
 \end{equation*}
 Combined with the elementary identity
 \begin{equation*}
   |\nabla(u\eta)(\bb)|^2=\nabla u(\bb)\nabla(u\eta^2)(\bb)+u(x_{\bb})u(y_{\bb})|\nabla\eta(\bb)|^2,
 \end{equation*}
 the equation for $u$, and the fact that $-fu\eta^2\leq
 fu_-\eta^2$ (here we use $f\geq 0$), the claimed estimate
 \eqref{eq:cacc2} follows.
 \medskip


  \step 2 Conclusion.

  Set  $\theta:=\frac{\alpha-1}{\alpha}$ and note that $\alpha$ is
  defined in such a way that for the considered range of $p$ we have
  \begin{equation}\label{eq:L2:03}
    \frac{1}{2}=\theta
    \frac{1}{p^{*}}+(1-\theta)\qquad\text{and}\qquad 2(1-\theta)<1.
  \end{equation}
  As we shall see below in Step~3, there exists a cut-off function
  $\eta$ with $\eta=1$ in $B_{R+1}(x_0)$ and $\eta=0$ outside of $B_{2R-1}(x_0)$, such that
  \begin{multline}
    \label{eq:L2:02}
    \left(R^{-d}\sum_{\bb\in\B^d}|u(x_{\bb})||u(y_{\bb})||R\nabla\eta(\bb)|^2\right)^{\frac{1}{2}}\,\lesssim\,\left(R^{-d}\sum_{\Z^d} |u\eta|^{p^*}\right)^{\frac{\theta}{p^*}}
    \left(R^{-d}\sum_{B_{2R}(x_0)}|u|\right)^{1-\theta}\\
    +\left(R^{-d}\sum_{\Z^d} |u\eta|^{p^*}\right)^{\frac{1}{2p^*}}
    \left(R^{-d}\sum_{B_{2R}(x_0)}|u|\right)^{\frac{1}{2}}.
  \end{multline}
  Let us explain the right-hand side of this estimate. While the first
  term on the right-hand side would also appear in the continuum case (i.e.~when $\Z^d$ is
  replaced by $\R^d$), the second term is an error term coming from discreteness. In
  fact, it is of lower order: A sharp look at \eqref{eq:16} below shows
  that \eqref{eq:L2:02} holds with  the vanishing factor $R^{-\epsilon}$ (for
  some $\epsilon>0$ only depending on $p$ and $d$) in front of the
  second term on the right-hand side.

  By combining this estimate with the Gagliardo-Nirenberg-Sobolev
  inequality on $\Z^d$,\linebreak i.e.~$\left(R^{-d}\sum_{\Z^d}|u\eta|^{p^*}\right)^{\frac{1}{p^*}}
  \,\lesssim\,\left(R^{-d}\sum_{\B^d}|R\nabla(u\eta)|^p\right)^{\frac{1}{p}}$,
  and two applications of Youngs' inequality, we find that for all $\delta>0$ there exists a constant $C(\delta)>0$ only depending on $\delta$, $p$ and $d$, such that
  \begin{equation*}
    \begin{split}
      &\left(CR^{-d}\sum_{\bb\in\B^d}|u(x_{\bb})||u(y_{\bb})|(\nabla\eta(\bb))^2\aa(\bb)\right)^{\frac{1}{2}}\\
      &\qquad\qquad\leq\,\delta\left(R^{-d}\sum_{\B^d}
        |R\nabla(u\eta)|^{p}\right)^{\frac{1}{p}} +
      C(\delta)\left(C^{\frac{1}{2(1-\theta)}}R^{-d}\sum_{B_{2R}(x_0)}|u|
        + CR^{-d}\sum_{B_{2R}(x_0)}|u|\right)\\
      &\qquad\qquad\stackrel{2(1-\theta)<1}{\leq}\,\delta\left(R^{-d}\sum_{\B^d}
        |R\nabla(u\eta)|^{p}\right)^{\frac{1}{p}} +
      2C(\delta)C^{\frac{1}{2(1-\theta)}}R^{-d}\sum_{B_{2R}(x_0)}|u|.
    \end{split}
  \end{equation*}
  We combine this estimate with \eqref{eq:cacc2} and absorb the first
  term on the right-hand side of the previous estimate into the
  left-hand side of \eqref{eq:cacc2}. Since $\nabla(\eta u)=\nabla u$ in $Q_{R}(x_0)$ this yields \eqref{L2:1}.
  \smallskip

  \step 3 Proof of \eqref{eq:L2:02}.

  We first construct a suitable cut-off function $\eta$ for $B_{R+1}(x_0)$ in
  $B_{2R-1}(x_0)$. W.~l.~o.~g. we assume that $x_0=0$. Recall that $\alpha=2\frac{p^*-1}{p^*-2}$.
  For $t\geq 0$ set
  \begin{equation*}
    \tilde\eta(t):=\max\{1-2\max\{\tfrac{t}{R+1}-1,0\},0\}^\alpha,
  \end{equation*}
  and define
  \begin{equation}
   \label{eq:cutoff}
    \eta(x):=\prod_{i=1}^d\tilde\eta(|x_i|).
  \end{equation}
  Using the relation $\alpha-1=\theta\alpha$, cf. \eqref{eq:L2:03}, it is straightforward to check that $\eta$ satisfies for all edges $\bb$ with
  $|\nabla\eta(\bb)|>0$:
  \begin{align}\label{eq:L2:3a}
    &R|\nabla\eta(\bb)|\lesssim
    \begin{cases}
      \min\{\eta^{\theta}(x_{\bb}),\eta^\theta(y_{\bb})\} & \text{if }\min\{\eta(x_{\bb}),\eta(y_{\bb})>0\},\\
      R^{1-\alpha} & \text{if }\min\{\eta(x_{\bb}),\eta(y_{\bb})\}=0.
    \end{cases}
  \end{align}
  Now we turn to \eqref{eq:L2:02}. We split the sum into a ``interior''
  and a ``boundary'' contribution:
  \begin{multline*}
    \sum_{\bb\in\B^d} |u(x_{\bb})||u(y_{\bb})|(\nabla\eta(\bb))^2\\
    = \sum_{\bb\in A_{\text{int}}} |u(x_{\bb})||u(y_{\bb})|(\nabla\eta(\bb))^2
    +\sum_{\bb\in A_{\text{bound}}} |u(x_{\bb})||u(y_{\bb})|(\nabla\eta(\bb))^2,
  \end{multline*}
      where
  \begin{eqnarray*}
    A_{\text{int}}&:=&\{\,\bb\,:\,|\nabla\eta(\bb)|>0\,\text{ and
    }\,\min\{\eta(x_{\bb}),\eta(y_{\bb})\}>0\,\},\\
    A_{\text{bound}}&:=&\{\,\bb\,:\,|\nabla\eta(\bb)|>0\,\text{ and }\,\min\{\eta(x_{\bb}),\eta(y_{\bb})\}=0\,\}.
  \end{eqnarray*}
  For $A_{\text{int}}$ we get with  \eqref{eq:L2:3a}, Young's
  inequality, and H\"older's inequality with exponents
  $(p^*\frac{1}{2\theta},\frac{1}{2(1-\theta)})$:
  \begin{equation}\label{eq:L2:021}
    \begin{aligned}
      &R^{-d}\sum_{\bb\in A_{\text{int}}}
      |u(x_{\bb})||u(y_{\bb})||R\nabla\eta(\bb)|^2\,\lesssim\,
      R^{-d}\sum_{\Z^d}u^2\eta^{2\theta}\\
      &\qquad =\,R^{-d}\sum_{\Z^d}(u\eta)^{2\theta}u^{2(1-\theta)}
      \, \leq\,
      \left(R^{-d}\sum_{\Z^d}
        (u\eta)^{p^*}\right)^{\frac{2\theta}{p^*}}\left(R^{-d}\sum_{B_{2R}}
        |u|\right)^{2(1-\theta)}.
    \end{aligned}
  \end{equation}
  Next we treat $A_{\text{bound}}$, which is an error term coming from discreteness. By the definition of $A_{\text{bound}}$ the cut-off function
  $\eta$ vanishes at one and only one of the two sites adjacent to $\bb\in
  A_{\text{bound}}$. Given $\bb\in A_{\text{bound}}$ we denote by $\tilde x_{\bb}$ (resp. $\tilde
  y_{\bb}$) the site adjacent to $\bb$ with $\eta(\tilde x_{\bb})=0$ (resp.
  $\eta(\tilde y_{\bb})\neq 0$), so that
  \begin{equation*}
    R^{-d}\sum_{\bb\in A_{\text{bound}}}|u(x_{\bb})||u(y_{\bb})||R\nabla\eta(\bb)|^2=R^{1-d}\sum_{\bb\in A_{\text{bound}}}|u(\tilde
    x_{\bb})||u(\tilde y_{\bb})|\eta(\tilde y_{\bb})|R\nabla\eta(\bb)|.
  \end{equation*}
  We combine this with \eqref{eq:L2:3a}, H\"older's inequality with
  exponents $(p^*,q^*:=\frac{p^*}{p^*-1})$, and the discrete $\ell^{1}$-$\ell^{q^*}$-estimate:
  \begin{align}\nonumber
    &R^{-d}\sum_{\bb\in
      A_{\text{bound}}}|u(x_{\bb})||u(y_{\bb})||R\nabla\eta(\bb)|^2\,\lesssim
    \,R^{2-d-\alpha}\sum_{\bb\in A_{\text{bound}}}|u(\tilde  x_{\bb})||u(\tilde y_{\bb})|\eta(\tilde y_{\bb})\\\nonumber
    &\qquad\leq\,
    R^{2-d-\alpha}\left(\sum_{B_{2R}}|u\eta|^{p^*}\right)^{\frac{1}{p^*}}\left(\sum_{B_{2R}}|u|^{q^*}\right)^{\frac{1}{q^*}}\,\leq\,
    R^{2-d-\alpha}\left(\sum_{B_{2R}}|u\eta|^{p^*}\right)^{\frac{1}{p^*}}\sum_{B_{2R}}|u|\\\label{eq:16}
    &\qquad=\,
    R^{\frac{d}{p^*}-2-\alpha}\left(R^{-d}\sum_{B_{2R}}|u\eta|^{p^*}\right)^{\frac{1}{p^*}}\left(R^{-d}\sum_{B_{2R}}|u|\right).
  \end{align}
  From the definition of $\alpha$ and $p^*$, and the fact that
  $\alpha>2$, we deduce that the exponent $\frac{d}{p^*}-2-\alpha$ is negative. Together with  \eqref{eq:L2:021} the desired estimate \eqref{eq:L2:02} follows.
\end{proof}

Now we are ready to prove Proposition~\ref{P1}. We distinguish the
cases $k\geq 1$ and $k=0$.
\begin{proof}[Proof of Proposition~\ref{P1}]
\step 1 Argument for $k\geq 1$.

  For brevity set $R:=2^{k-1}R_0$ and recall that $A_k=Q_{2
    R}(0)\setminus Q_{R}(0)$. We cover the annulus $A_k$ by boxes $Q_{\frac{R}{2}}(x_0)$, $x_0\in X_R\subset\Z^d$, such that
  \begin{equation}\label{eq:4}
    A_k\subset\bigcup\limits_{x_0\in X_R}Q_{\frac{R}{2}}(x_0)\subset \bigcup\limits_{x_0\in X_R}Q_{R}(x_0)\subset
    Q_{3R}(0)\setminus \{0\}.
  \end{equation}
  Since the diameter of the annulus and the side length of the boxes
  are comparable, we may choose $X_R$ such that its
  cardinality is bounded by a constant only depending on $d$.
  Since in addition we have for $x_0\in X_R$ the inequality
  $C(\aa,Q_R(x_0),\tfrac{p}{2-p})\lesssim C(\aa, Q_{3R}(0),\tfrac{p}{2-p})$ (thanks to the
  third inclusion in \eqref{eq:4}), it suffices to prove
  \begin{equation*}
    \left(R^{-d}\sum_{\bb\in Q_{\frac{R}{2}}(x_0)}|\nabla
      G_T(\aa,\bb,0)|^p\right)^{\frac{1}{p}}\lesssim
    C^{\frac{\beta}{2}}\,R^{1-d},\qquad\text{where } C:=C(\aa,
    Q_{R}(x_0),\tfrac{p}{2-p}),
  \end{equation*}
  for each $x_0\in X_R$ separately.
We use the shorthand $G_T(x):=
  G_T(\aa,x,0)$ and set $\bar G_T:=\frac{1}{|B_R(x_0)|}\sum_{x\in
    B_R(x_0)}G_T(x)$. In view of \eqref{eq:D:G},  $u(x):=G_T(x)-\bar
  G_T$ satisfies \eqref{L:P1:2-1} with $f=\delta-\frac{1}{T}G_T$.
  Since
  \begin{equation}\label{eq:17}
    \sum_{\Z^d}|\delta-\frac{1}{T}G_T|\leq 1+\frac{1}{T}\sum_{\Z^d}G_T(x)=2,
  \end{equation} 
  Lemma~\ref{L:P1:2} yields
  \begin{equation}\label{eq:P1:1}
    R^{-d}\sum_{B_{R}(x_0)}|u|\lesssim C^{\frac{1}{2}p^*}\,R^{2-d}.
  \end{equation}
  Thanks to the third inclusion in \eqref{eq:4} we have $0\notin
  B_R(x_0)$, and thus $u$ satisfies \eqref{eq:L2:1} with
  $f=\frac{1}{T}G_T$ (with $B_{2R}(x_0)$ replaced by $B_{R}(x_0)$).
  Hence, Lemma~\ref{L:P1:2b} yields
  \begin{eqnarray}
    \begin{split}
      \label{eq:lemma2app}
      \left(R^{p-d}\sum_{ Q_{\frac{R}{2}}(x_0)}|\nabla G_T|^p\right)^{\frac{1}{p}}&=\left(R^{p-d}\sum_{ Q_{\frac{R}{2}}(x_0)}|\nabla u|^p\right)^{\frac{1}{p}}\\
      &\lesssim C^{\frac{1}{2}\alpha}\,
      R^{-d}\sum_{B_{R}(x_0)}
      |u|\,+\,C^{\frac{1}{2}\alpha}\,\left(R^{2-d}\sum_{B_R(x_0)}
        \frac{1}{T}G_Tu_-\right)^{\frac{1}{2}}\\
      &\stackrel{\eqref{eq:P1:1}}{\lesssim} C^{\frac{1}{2}(\alpha+p^*)}\,R^{2-d}\,+\,C^{\frac{1}{2}\alpha}\,\left(R^{2-d}\sum_{B_R(x_0)}
        \frac{1}{T}G_Tu_-\right)^{\frac{1}{2}}.
    \end{split}
  \end{eqnarray}
  Regarding the second term on the right-hand side we only need to
  show \begin{equation}\label{eq:P1:2}
    \frac{1}{T}\sum_{B_R(x_0)} G_Tu_-\lesssim C^{p^*} R^{2-d}.
  \end{equation} 
   We note that
  $(G_T-\bar G_T)(G_T-\bar G_T)_-\leq 0$, so that
  \begin{eqnarray*}
    \frac{1}{T}\sum_{B_R(x_0)} G_T u_-
    &=&\frac{1}{T}\sum_{B_R(x_0)}(G_T-\bar G_T+\bar G_T)(G_T-\bar G_T)_-
    \leq \frac{1}{T}\bar G_T\sum_{B_{R}(x_0)}|G_T-\bar G_T|.
  \end{eqnarray*}
  Combined with  \eqref{eq:P1:1} and the inequality $\frac{1}{T}\bar G_T\lesssim R^{-d}\frac{1}{T}\sum_{B_{R}(x_0)}G_T\leq R^{-d}$,
  \eqref{eq:P1:2} follows.
\medskip

\step 2 Argument for $k=0$.
  Fix $\aa\in\Omega$. For brevity set $G_T(x):=G_T(\aa,x,0)$ and $\bar G_T:=\frac{1}{|B_{2R_0}(0)|}\sum_{x\in
    B_{2R_0}(0)}G_T(x)$. By the discrete 
	$\ell^1$-$\ell^{p}$-estimate and the elementary inequality
        $|\nabla G_T(\bb)|\leq |G_T(x_{\bb})-\bar G_T|+|G_T(y_{\bb})-\bar
        G_T|$ we have
        \begin{equation*}
          \left(\frac{1}{|Q_{R_0}(0)|}\sum_{\bb\in Q_{R_0}(0)}|\nabla
            G_T(\bb)|^p\right)^{\frac{1}{p}}\lesssim \sum_{       B_{2R_0}(0)}|G_T-\bar G_T|.
        \end{equation*}
        As in Step~1 an application of Lemma~\ref{L:P1:2} yields
	\begin{equation*}
          \sum_{B_{2R_0}(0)}|G_T-\bar
          G_T|\lesssim C^{\frac{p^*}{2}}(\aa,Q_{2R_0}(0),\tfrac{p}{2-p})
          \,R_0^{2}.
        \end{equation*}
        Since $R_0^{2}\sim R_0^{1-d}$ and because the exponent of the
        constant satisfies $\frac{p^*}{2}\leq\frac{\beta}{2}$, the desired
        estimate follows.
      \end{proof}

\subsection{Proof of  Lemma~\ref{L:CEP}}
In order to deal with the failure of the Leibniz rule we will appeal to a number of discrete 
estimates, which are stated in Lemma~\ref{lem:1} below. As already mentioned, we
replace the missing uniform ellipticity of $\aa$ by the coercivity
estimate of Lemma~\ref{lem:coercivity-prob} which makes use of the
weight $\omega$ defined in \eqref{D:omega}. Morally speaking it plays the
role of $\frac{1}{\lambda_0}$ in \eqref{eq:29}. In view of Assumption (A2) all
moments of $\omega$ are bounded, i.~e. $\expec{\omega^{k}}\lesssim 1$, where $\lesssim$ means $\leq$ up to a  constant that
only depends on $k$, $p$, $\Lambda$ and $d$. 
We split the proof of Lemma~\ref{L:CEP} into the following two inequalities:
\begin{align}
  \label{eq:36}
  \expec{|\nabla\phi(\bb)|^{2p+1}}^{\frac{2p+2}{2p+1}}\ &\lesssim
  \sum_{\bb'=\{0,e_i\}\atop
    i=1,\ldots,d}\expec{|\nabla(\phi^{p+1})(\bb')|^2\aa(\bb')},\\
  \label{eq:37}
  \sum_{\bb'=\{0,e_i\}\atop
    i=1,\ldots,d}\expec{|\nabla(\phi^{p+1})(\bb')|^2\aa(\bb')}\
  &\lesssim \expec{\phi^{2p}(x)}.
\end{align}
Here and below we write $\phi$ instead of $\phi_T$ for simplicity.
Note that due to stationarity the left-hand side of \eqref{eq:36} and
the right-hand side of \eqref{eq:37} do not depend on $\bb\in\B^d$
(resp. $x\in\Z^d$). Therefore, we suppress these arguments in the
following. We start with \eqref{eq:36}. We smuggle in $\omega$
by appealing to H\"older's inequality with exponent
$\frac{2p+2}{2p+1}$ and exploit that all moments of $\omega$ are
bounded by Assumption (A2):
\begin{align*}
 \expec{|\nabla\phi|^{2p+1}}^{\frac{2p+2}{2p+1}}
 \lesssim\expec{|\nabla\phi|^{2p+2}\omega^{-1}}.
\end{align*} 
We combine \eqref{eq:discrete} in the form of
$|\nabla\phi(\bb)|^{2p+2}\lesssim(\frac{\phi^p(x_{\bb})+\phi^p(y_{\bb})}{2})^2|\nabla\phi(\bb)|^2$ (where
we use that $p$ is even) with the discrete version of the Leibniz rule
$F^p\nabla F=\frac{1}{p+1}\nabla(F^{p+1})$, see \eqref{eq:CorLeibniz2} in
Corollary~\ref{C:leibniz} below:
\begin{equation}\label{eq:35}
\expec{|\nabla\phi|^{2p+2}\omega^{-1}}
 \lesssim \expec{|\nabla(\phi^{p+1})|^2\omega^{-1}}.
\end{equation}
Now \eqref{eq:36} follows from  the coercivity estimate of
Lemma~\ref{lem:coercivity-prob}. 

Next we prove \eqref{eq:37}. The discrete version of the Leibniz rule $|\nabla(F^{p+1})|^2=\frac{(p+1)^2}{(2p+1)}\nabla
F\nabla(F^{2p+1})$ (see Lemma~\ref{lem:1} (ii)) yields
\begin{equation*}
 \sum_{\bb'=\{0,e_i\}\atop
    i=1,\ldots,d}\expec{|\nabla(\phi^{p+1})(\bb')|^2\aa(\bb')}
\lesssim \sum_{\bb'=\{0,e_i\}\atop i=1,\ldots,d}\expec{\nabla\phi(\bb')\aa(\bb')\nabla(\phi^{2p+1})(\bb')}.
\end{equation*}
By stationarity and the modified corrector equation
\eqref{eq:cor-modified} we have
\begin{eqnarray*}
  \lefteqn{\sum_{\bb'=\{0,e_i\}\atop
      i=1,\ldots,d}\expec{\nabla\phi(\bb')\aa(\bb')\nabla(\phi^{2p+1})(\bb')}=\expec{(\nabla^*\aa\nabla\phi)\
      \phi^{2p+1}}}&& \\
    &=& -\frac{1}{T}\expec{\phi^{2(p+1)}}-\sum_{\bb'=\{0,e_i\}\atop
      i=1,\ldots,d}\expec{\nabla\phi^{2p+1}(\bb')\aa(\bb')e (\bb')}\\
    &\leq & \sum_{\bb'=\{0,e_i\}\atop
      i=1,\ldots,d}\expec{|\nabla(\phi^{2p+1})(\bb')|\aa(\bb')},
\end{eqnarray*}
where for the last inequality we use that $\phi^{2(p+1)}\geq 0$ and
$|e |=1$. By Corollary~\ref{C:leibniz} and Young's inequality we get for any $\epsilon>0$
\begin{eqnarray*}
  \sum_{\bb'=\{0,e_i\}\atop i=1,\ldots,d}\expec{|\nabla(\phi^{2p+1})(\bb')|\aa(\bb')}
  &\stackrel{\eqref{eq:CorLeibniz1}}{\lesssim}&
  \epsilon\sum_{\bb'=\{0,e_i\}\atop i=1,\ldots,d}\expec{|\nabla\phi(\bb')|^2\left(\tfrac{\phi^p(x_{\bb'})+\phi^p(y_{\bb'})}{2}\right)^2\aa(\bb')}
  \\
  &&+\frac{1}{\epsilon}
 \sum_{\bb'=\{0,e_i\}\atop i=1,\ldots,d}\expec{\left(\tfrac{\phi^p(x_{\bb'})+\phi^p(y_{\bb'})}{2}\right)^2}\\
 &\stackrel{\eqref{eq:CorLeibniz2}}{\lesssim}&
 \epsilon\sum_{\bb'=\{0,e_i\}\atop
      i=1,\ldots,d}\expec{|\nabla(\phi^{p+1})(\bb')|^2\aa(\bb')}
 +\frac{1}{\epsilon}\expec{\phi^{2p}}.
\end{eqnarray*}
Since we may choose $\epsilon>0$ as small as we wish, the first term
on the right-hand side can be absorbed into the left-hand side of
\eqref{eq:37} and the claim follows.

\section*{Acknowledgments}
We thank Artem Sapozhnikov for stimulating discussions on percolation
models. Stefan Neukamm was
partially supported by ERC-2010-AdG no.267802 AnaMultiScale. Most of
this work was done while all three authors were employed at the
Max-Planck-Institute for Mathematics in the Sciences, Leipzig.

\appendix

\subsection*{Appendix: Replacements of the Leibniz rule for the discrete derivative}
\begin{lemma}
  \label{lem:1}
  Let $F$ be a scalar function on $\Z^d$ and $\bb\in\B^d$.
  \begin{enumerate}[(i)]
  \item Assume that $p\in2\N$. Then we have
    \begin{equation*}
     |\nabla(F^{p+1})(\bb)|\sim|\nabla F(\bb)|\frac{F^p(x_{\bb})+F^p(y_{\bb})}{2}.
    \end{equation*}
  \item For every integer $p$ we have
   \begin{align*}
     |\nabla(F^{p+1})(\bb)|^2\lesssim\nabla F(\bb)\nabla(F^{2p+1})(\bb).
   \end{align*}
 \end{enumerate}
 Here $\lesssim$ (resp. $\sim$) means up to a  constant
 that only depends on $p$.
\end{lemma}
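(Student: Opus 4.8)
The plan is to reduce both parts to elementary inequalities for two real numbers. Fix the bond $\bb=\{x_\bb,y_\bb\}$ and abbreviate $a:=F(x_\bb)$, $b:=F(y_\bb)$, so that $\nabla F(\bb)=b-a$ and $\nabla(F^{m})(\bb)=b^{m}-a^{m}$ for every $m\ge 1$. The workhorse is the telescoping identity $b^{m}-a^{m}=(b-a)\sum_{j=0}^{m-1}a^{j}b^{m-1-j}$. Writing $S:=\sum_{j=0}^{p}a^{j}b^{p-j}$ and $\widetilde S:=\sum_{j=0}^{2p}a^{j}b^{2p-j}$, this gives $\nabla(F^{p+1})(\bb)=(b-a)S$ and $\nabla(F^{2p+1})(\bb)=(b-a)\widetilde S$. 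Hence part (i) is equivalent to the two-sided estimate $S\sim|a|^{p}+|b|^{p}$ (here I use that $p$ is even, so $a^{p}=|a|^{p}$), and part (ii), after cancelling the common factor $(b-a)^{2}$ (the case $a=b$ being trivial), is equivalent to $S^{2}\lesssim\widetilde S$; note that the right-hand side of (ii) equals $(b-a)^{2}\widetilde S=(b-a)(b^{2p+1}-a^{2p+1})\ge 0$ since $t\mapsto t^{2p+1}$ is nondecreasing.

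For part (i), the upper bound $|S|\le\sum_{j=0}^{p}|a|^{j}|b|^{p-j}\le(p+1)(|a|^{p}+|b|^{p})$ follows at once from the triangle and Young inequalities. For the lower bound I would first observe that $S=\frac{b^{p+1}-a^{p+1}}{b-a}>0$ for $a\ne b$, since $p+1$ is odd, and then distinguish two cases. If $ab\ge 0$, every summand $a^{j}b^{p-j}$ of $S$ is nonnegative (for $a,b\le 0$ this uses $(-1)^{p}=1$), so keeping only the end terms $j=0,p$ gives $S\ge|a|^{p}+|b|^{p}$. If $ab<0$, replacing each summand $a^{j}b^{p-j}$ by $(-1)^{j}|a|^{j}|b|^{p-j}$ — an equality for even $j$ and a lower bound for odd $j$ — yields $S\ge\sum_{j=0}^{p}(-|a|)^{j}|b|^{p-j}=\frac{|a|^{p+1}+|b|^{p+1}}{|a|+|b|}\ge\tfrac12\max(|a|,|b|)^{p}\ge\tfrac14(|a|^{p}+|b|^{p})$, again by the telescoping identity and $p+1$ odd. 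Together with the trivial case $a=b$ this proves (i).

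For part (ii), I would start from Cauchy--Schwarz: $S^{2}=\bigl(\sum_{j=0}^{p}a^{j}b^{p-j}\bigr)^{2}\le(p+1)\sum_{j=0}^{p}a^{2j}b^{2(p-j)}$, and observe that the last sum is exactly the sub-sum of $\widetilde S$ over the even indices. It then remains to show $\sum_{j=0}^{p}a^{2j}b^{2(p-j)}\lesssim\widetilde S$. Discarding the odd-index summands of $\widetilde S$ by replacing $a^{i}b^{2p-i}$ with $(-1)^{i}|a|^{i}|b|^{2p-i}$ gives $\widetilde S\ge\sum_{i=0}^{2p}(-|a|)^{i}|b|^{2p-i}=\frac{|a|^{2p+1}+|b|^{2p+1}}{|a|+|b|}\ge\tfrac12\max(|a|,|b|)^{2p}$, whereas $\sum_{j=0}^{p}a^{2j}b^{2(p-j)}\le(p+1)\max(|a|,|b|)^{2p}$; hence $\widetilde S\ge\frac{1}{2(p+1)}\sum_{j=0}^{p}a^{2j}b^{2(p-j)}\ge\frac{1}{2(p+1)^{2}}S^{2}$. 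Multiplying through by $(b-a)^{2}$ yields (ii) with constant $2(p+1)^{2}$.

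The genuinely routine part is the geometric-sum bookkeeping; the only place where care is needed is the sign analysis in the two lower bounds, since the inequalities degenerate to equalities in the borderline configurations $b=-a$ and $b=0$, so one cannot be cavalier about constants. The device that handles all sign patterns uniformly is the comparison with the alternating geometric sum $\frac{x^{n}+y^{n}}{x+y}$ for odd $n$ — the same trick used for the lower bound on $S$ in (i) and on $\widetilde S$ in (ii) — and isolating this common mechanism is, I expect, the main (modest) obstacle.
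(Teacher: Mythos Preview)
Your argument is correct. For both parts you reduce to a statement about two real numbers via the factorisation $b^{m}-a^{m}=(b-a)\sum_{j=0}^{m-1}a^{j}b^{m-1-j}$ and then control the sign-alternations uniformly with the comparison $\widetilde S\ge\sum_{i}(-|a|)^{i}|b|^{2p-i}=\dfrac{|a|^{2p+1}+|b|^{2p+1}}{|a|+|b|}$, which is the key trick. (A minor remark: in the case $ab<0$ of part~(i) your ``lower bound for odd $j$'' is in fact an equality, since $p$ is even; this does not affect anything.)

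The paper proceeds differently. For part~(i) it simply cites the estimates (5.28)--(5.29) of \cite{GO1} and does not give a self-contained argument. For part~(ii) it splits into the cases $F(x_\bb)F(y_\bb)\ge 0$ and $F(x_\bb)F(y_\bb)<0$: in the first case it rescales to the one-variable inequality $(1-f^{p+1})^{2}\le c\,(1-f)(1-f^{2p+1})$ for $f\ge 0$ (whose proof it omits), and in the second case it expands both sides and uses the sign of the cross terms together with $-2F^{p+1}(x_\bb)F^{p+1}(y_\bb)\le F^{2p+2}(x_\bb)+F^{2p+2}(y_\bb)$. Your approach has the advantage of being self-contained and treating all sign configurations by a single mechanism (the alternating geometric sum), and it yields an explicit constant $2(p+1)^{2}$ in~(ii); the paper's approach is shorter on the page but relies on an external reference and leaves the ``elementary'' same-sign case of~(ii) to the reader.
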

\begin{proof}[Proof of Lemma~\ref{lem:1}]
Let $x,y\in\Z^d$ denote the vertices with $\bb=\{x,y\}$ and
$y-x\in\{e_1,\ldots,e_d\}$ so that $\nabla F(\bb)=F(y)-F(x)$.
\medskip

 {\it Proof of part (i).} The statement $"\lesssim"$ is equivalent
 to \cite[Equation (5.29)]{GO1} and is proven there.
 Concerning $\gtrsim$ we appeal to \cite[Equation (5.28)]{GO1}.
 From that equation we learn that
  \begin{equation*}
  \nabla(F^{p+1})(\bb)\nabla F(b)\gtrsim\frac{F^p(x_{\bb})+F^p(y_{\bb})}{2}|\nabla F(\bb)|^2.
  \end{equation*}
  By dividing by $|\nabla F(\bb)|$ one immediately finds the claimed result.
  \medskip

 {\it Proof of part (ii).} We have to distinguish two cases.

 First case: $F(x),F(y)\geq 0$ or $F(x),F(y)\leq 0$. It suffices to show the statement for $F(x),F(y)\geq 0$, 
since then the case $F(x),F(y)\leq 0$ follows by symmetry. We have to prove that 
\begin{align*}
  (F^{p+1}(y)-F^{p+1}(x))^2\lesssim (F(y)-F(x))(F^{2p+1}(y)-F^{2p+1}(x)).
 \end{align*}
 By symmetry and and scale invariance, it suffices to show the elementary inequality
 \begin{equation}\label{eq:1}
    \forall f\geq 0:\ \ \      
(1-f^{p+1})^2\leq c (1-f)(1-f^{2p+1}),
  \end{equation} 
where $c>0$ only depends on $p$. We omit its proof for the sake of brevity.

 Second case: $F(x)\leq 0,F(y)\geq 0$ or $F(x)\geq 0,F(y)\leq 0$.
  It suffices to show the statement for $F(x)\leq 0, F(y)\geq 0$, since then
  the case $F(x)\geq 0, F(y)\leq 0$ follows by symmetry. We have to prove
that
  \begin{align*}
  (F^{p+1}(y)-F^{p+1}(x))^2\lesssim (F(y)-F(x))(F^{2p+1}(y)-F^{2p+1}(x))
 \end{align*}
 or equivalently
 \begin{align*}
   &F^{2(p+1)}(y)+F^{2(p+1)}(x)-2F^{p+1}(y)F^{p+1}(x)\\
  &\qquad\qquad \lesssim F^{2p+2}(y)+F^{2p+2}(x)-F(x)F^{2p+1}(y)-F(y)F^{2p+1}(x).
 \end{align*}
Note that since $2p+1$ is an odd integer, the last two terms on the right
hand side
of the above inequality are positive. Hence, it suffices to prove that
 \begin{align*}
  F^{2(p+1)}(y)+F^{2(p+1)}(x)-2F^{p+1}(y)F^{p+1}(x)\lesssim
  F^{2p+2}(y)+F^{2p+2}(x),
 \end{align*}
which follows due to $-2F^{p+1}(y)F^{p+1}(x)\leq F^{2p+2}(y)+F^{2p+2}(x)$.
\end{proof}

In the course of proving our main result we will use the discrete Leibniz
rule,
(i) in the above lemma, in the following form.

\begin{corollary}\label{C:leibniz}
 For every scalar function $F$, every bond $\bb$ and every even integer
$p$ we have
  \begin{align}
    \label{eq:CorLeibniz1}
     |\nabla(F^{2p+1})(\bb)|&\lesssim |\nabla
     F(\bb)|\left(\frac{F^p(x_{\bb})+F^p(y_{\bb})}{2}\right)^2,\\
    \label{eq:CorLeibniz2}
     |\nabla F(\bb)|^2\left(\frac{F^p(x_{\bb})+F^p(y_{\bb})}{2}\right)^2&\lesssim|\nabla(F^{p+1})(\bb)|^2.
  \end{align}
 Here $\lesssim$ means up to a  constant
 that only depends on $p$.

 \end{corollary}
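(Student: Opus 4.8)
The plan is to derive both inequalities of the corollary directly from part~(i) of Lemma~\ref{lem:1}, which carries the only genuinely discrete input; the corollary merely re-packages that two-sided comparison of $\nabla(F^{p+1})$ with $|\nabla F|\tfrac{F^p(x_{\bb})+F^p(y_{\bb})}{2}$ into the one-sided forms that are convenient in the proof of Lemma~\ref{L:CEP}.

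First I would treat \eqref{eq:CorLeibniz2}. Since $p$ is even, Lemma~\ref{lem:1}(i) applies and yields in particular the lower bound $|\nabla(F^{p+1})(\bb)|\gtrsim|\nabla F(\bb)|\,\tfrac{F^p(x_{\bb})+F^p(y_{\bb})}{2}$; squaring this is exactly \eqref{eq:CorLeibniz2}. For \eqref{eq:CorLeibniz1} I would apply Lemma~\ref{lem:1}(i) \emph{with $2p$ in place of $p$} --- legitimate because $p\in2\N$ forces $2p\in2\N$ --- to obtain $|\nabla(F^{2p+1})(\bb)|\sim|\nabla F(\bb)|\,\tfrac{F^{2p}(x_{\bb})+F^{2p}(y_{\bb})}{2}$. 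It then remains to pass from the arithmetic mean of $F^{2p}(x_{\bb})$ and $F^{2p}(y_{\bb})$ to the square of the arithmetic mean of $F^p(x_{\bb})$ and $F^p(y_{\bb})$. Here I would use that $p$ even makes $a:=F^p(x_{\bb})$ and $b:=F^p(y_{\bb})$ non-negative, so that the elementary inequality $a^2+b^2\le(a+b)^2$ gives $\tfrac{F^{2p}(x_{\bb})+F^{2p}(y_{\bb})}{2}\le2\bigl(\tfrac{F^p(x_{\bb})+F^p(y_{\bb})}{2}\bigr)^2$; combining the two displays (and absorbing the constant $2$ into $\lesssim$) gives \eqref{eq:CorLeibniz1}.

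I do not expect a real obstacle here: the substantive estimates are already contained in Lemma~\ref{lem:1}, and what is left is bookkeeping. The two points deserving a moment's care are that the parity hypothesis of Lemma~\ref{lem:1}(i) survives the substitution $p\mapsto 2p$, and that the non-negativity of $F^p(x_{\bb})$ and $F^p(y_{\bb})$ --- again a consequence of $p$ being even --- is precisely what makes the crude bound $a^2+b^2\le(a+b)^2$ available, allowing one to trade the arithmetic mean of $F^{2p}$ for the square of the arithmetic mean of $F^p$ at the cost of only a $p$-dependent constant.
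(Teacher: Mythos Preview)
Your proposal is correct and is exactly the natural derivation the paper intends: the corollary is stated without proof in the paper, as an immediate consequence of Lemma~\ref{lem:1}(i), and your two steps --- squaring the $\gtrsim$ direction of Lemma~\ref{lem:1}(i) for \eqref{eq:CorLeibniz2}, and applying Lemma~\ref{lem:1}(i) with $2p$ in place of $p$ together with $a^2+b^2\le(a+b)^2$ for $a,b\ge 0$ for \eqref{eq:CorLeibniz1} --- are precisely what is needed.
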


\end{document}